\newtheorem{prop}{Proposition}[section]
\newtheorem{theorem}{Theorem}[section]
\newtheorem{corollary}{Corollary}[section]
\newtheorem{lemma}{Lemma}[section]
\newtheorem{definition}{Definition}[section]
\newtheorem{remark}{Remark}[section]
\begin{document}

  \begin{center}
 {\Large \bf Unifying adjacency, Laplacian, and signless Laplacian theories}\\

 \vspace{6mm}

 \bf {Aniruddha Samanta$^a$, Deepshikha$^b$, Kinkar Chandra Das$^{c,*}$}

 \vspace{5mm}

{\it $^a$Theoretical Statistics and Mathematics Unit, Indian Statistical Institute,\\
Kolkata-700108, India\/} \\
{\tt E-mail: aniruddha.sam@gmail.com}\\[2mm]

{\it $^b$Department of Mathematics, Shyampur Siddheswari Mahavidyalaya, University of Calcutta,\\
 West Bengal 711312, India\/} \\
{\tt E-mail: dpmmehra@gmail.com}\\[2mm]

{\it $^c$Department of Mathematics, Sungkyunkwan University, \\
 Suwon 16419, Republic of Korea\/} \\
{\tt E-mail: kinkardas2003@gmail.com}

 \hspace*{30mm}


 \end{center}

 \baselineskip=0.23in

\begin{abstract} Let $G$ be a simple graph with associated diagonal matrix of vertex degrees $D(G)$, adjacency matrix $A(G)$, Laplacian matrix $L(G)$ and signless Laplacian matrix $Q(G)$. Recently, Nikiforov proposed the family of matrices $A_\alpha(G)$ defined for any real $\alpha\in [0,1]$ as $A_\alpha(G):=\alpha\,D(G)+(1-\alpha)\,A(G)$, and also mentioned that the matrices $A_\alpha(G)$ can underpin a unified theory of $A(G)$ and $Q(G)$. Inspired from the above definition, we introduce the $B_\alpha$-matrix of $G$, $B_\alpha(G):=\alpha A(G)+(1-\alpha)L(G)$ for $\alpha\in [0,1]$. Note that $ L(G)=B_0(G), D(G)=2B_{\frac{1}{2}}(G), Q(G)=3B_{\frac{2}{3}}(G), A(G)=B_1(G)$. In this article, we study several spectral properties of $ B_\alpha $-matrices to unify the theories of adjacency, Laplacian, and signless Laplacian matrices of graphs. In particular, we prove that each eigenvalue of $ B_\alpha(G) $ is continuous on $ \alpha $. Using this, we characterize positive semidefinite $ B_\alpha $-matrices in terms of $\alpha$. As a consequence, we provide an upper bound of the independence number of $ G $. Besides, we establish some bounds for the largest and the smallest eigenvalues of $B_\alpha(G)$. As a result, we obtain a bound for the chromatic number of $G$ and deduce several known results. 
In addition, we present a Sachs-type result for the characteristic polynomial of a $ B_\alpha $-matrix. 

\end{abstract}

\noindent
{\bf MSC}: 05C50, 05C22, 05C35.\\[1mm]
{\bf Keywords:} Adjacency Matrix, Laplacian Matrix, Signless Laplacian Matrix, Convex Combination, $B_\alpha $-matrix, $A_\alpha$-matrix, Chromatic number, Independence number.


\baselineskip=0.25in

\section{Introduction}
Throughout this article, we consider $G$ to be a simple undirected graph with vertex set $V(G)=\{ v_1, v_2, \ldots, v_n\}$ and edge set $E(G)$. The adjacency matrix of $G$ is a symmetric $n\times n$ matrix $A(G)$ whose $(i,j)$-th entry is $1$ if $v_i$ and $v_j$ are adjacent, $0$ otherwise. The degree matrix of $ G $ is a diagonal matrix $ D(G) $ whose $i^{th}$ diagonal entry is the degree of the vertex $ v_i $. The Laplacian and the signless Laplacian matrix of $ G $ are the matrices, $ L(G):=D(G)-A(G) $ and $ Q(G):=D(G)+A(G) $, respectively. If graph $ G $ is clear from the context, we simply write $ A,L, $ and $ D $ instead of $ A(G), L(G) $, and $ D(G) $, respectively. Adjacency, Laplacian, and signless Laplacian matrices are some of the matrices associated with a graph which are widely studied in the literature \cite{1,2,3,4,5,6,DA1,DA2,DA3}. It can be seen that many of the spectral properties of such matrices are quite different from each other. We will thus analyze the spectral properties of the convex combinations of $A(G)$ and $L(G)$ in order to understand how uniformly the spectral behavior transforms from one matrix to another.

\begin{definition}
	For $ \alpha \in [0,1] $ the $ B_\alpha $-matrix of $ G $ is the convex linear combination $B_{\alpha}(G):=\alpha A(G)+(1-\alpha)L(G)$ (or simply $ B_\alpha=\alpha A+(1-\alpha)L $, if $ G $ is clear from the context).
\end{definition}

\begin{remark}
	Note that $ L(G)=B_0(G),\, D(G)=2B_{\frac{1}{2}}(G),\, Q(G)=3B_{\frac{2}{3}}(G),\, A(G)=B_1(G)$.
\end{remark}

It is clear that the spectral properties of $ B_{1/2}(G) $ and $ B_{2/3}(G) $ are equivalent to the spectral properties of $ D(G) $ and $ Q(G) $, respectively. In fact, $ A(G), L(G), Q(G),$ and $ D(G) $ can be considered as the $ B_\alpha$-matrix of $ G $ up to proportionality. Therefore, on the one hand, the spectral properties of $ B_\alpha(G) $ may reveal the common connection among the spectral properties of all such well-known matrices. On the other hand, $ B_\alpha(G) $ may analyze the structural and combinatorial properties of the graph $ G $ in a better way.

Recently, Nikiforov \cite{Niki} introduced a family of matrices, known as $ A_\alpha $-matrix, which is a convex combination of $ A(G) $ and $ D(G) $. The theory of $ A_\alpha $-matrices merges the theories of the adjacency matrix and signless Laplacian matrix of graphs. Later on, much work has been done on these matrices. We have results on the spectral radius (\cite{AB1,A_spectral_radius_1, A_spectral_radius_2,Prof.K.C.Das-2}), the second largest eigenvalue \cite{A_2nd_largest_eigenvalue}, the $k$-th largest eigenvalue \cite{Prof.K.C.Das-2}, the least eigenvalue (\cite{Prof.K.C.Das-1,A_least_eigenvalue}), the multiplicity of the eigenvalues (\cite{alpha_4,AS1}), positive semidefiniteness \cite{PSD}, the characteristic polynomial \cite{A_ch_poly}, spectral determination of graphs \cite{A_graph_determined_by_spectrum}, etc. Motivated by Nikiforov's work, we consider $ B_\alpha $-matrices and study their spectral properties.  However, unlike $ A_\alpha $-matrices, $ B_\alpha $-matrices are not always non-negative, but they obey Perron-Frobenius type results. In this article, we develop the theory of $ B_\alpha $-matrices to unify the theory of adjacency matrix, Laplacian matrix, and signless Laplacian matrix.

The paper is organized as follows. In Section \ref{sec0}, we list some previously known results. Section \ref{sec1} discusses the positive semidefiniteness of $ B_\alpha $-matrices. As a consequence, we obtain an upper bound for the independence number. Then  we present some bounds of eigenvalues of $ B_\alpha(G) $ in terms of maximum degree and minimum degree, chromatic number, etc., in Section \ref{sec2}. Besides, we obtain a lower bound for chromatic number and derive several known results as consequences. Finally, we study the determinant and a Sachs-type result for the characteristic polynomial of $ B_\alpha(G) $ in Section \ref{sec3}. 

\section{Preliminaries}\label{sec0}

Let $ G $ be a simple graph with vertex set $ V(G)=\{ v_1, v_2, \dots, v_n\} $ and edge set $ E(G) $. If two vertices $ v_i $ and $ v_j $ are adjacent, we write $ v_i \sim v_j $, and $v_iv_j$ denotes the edge between them. The degree of a vertex $ v_i $  is the number of edges adjacent to $ v_i $ and is denoted by $ d_G(v_i) $ or simply $ d(v_i) $ or $ d_i $. The minimum degree and the maximum degree of graph $G$ are denoted by $\delta(G)$ (or simply $\delta$) and $\Delta(G)$ (or simply $\Delta$), respectively. The complement of a graph $ G $ is the graph $ \overline{G} $ with vertex set $ V(G) $ and two vertices in $ \overline{G}$  are adjacent if and only if they are non-adjacent in $ G $. The $ 0-1 $ incidence matrix of a graph $ G $ with $ n $ vertices $ \{v_1, v_2, \dots, v_n\} $ and $ m $ edges $ \{ e_1, e_2, \dots, e_m\} $ is an $ n\times m $ matrix $ M $ whose $ (i,j) $-th entry is $ 1 $ if the vertex $ v_i $ is incident on the edge $ e_j $ and $0$ otherwise.\\

The line graph of a graph $ G $ is the graph $ G_{\ell}$ with vertex set $ E(G)$ and two vertices $ e_i$ and $ e_j $ in $ G_{\ell} $ are adjacent if the edges $ e_i $ and $ e_j $ have a common vertex in the graph $ G $. The identity matrix of order $ n $ is denoted by $ I_n $ (or simply $ I $). An $ a\times b $ matrix whose entries are all ones is denoted by $ J_{a,b} $ (or simply $ J $ when the order is clearly understood). The transpose of a matrix $ M $ is denoted by $ M^t $.
\begin{lemma}{\rm \cite[Lemma 6.16]{Bapat}}\label{lm2.1}
	Let $ G $ be a graph with line graph $G_{\ell}$. If $ M $ is the $ 0-1 $ incidence matrix of $ G $, then $ M^tM=A(G_{\ell})+2I $. Moreover, if $ G $ is $k$-regular, then $ MM^t=A(G)+kI$.
\end{lemma}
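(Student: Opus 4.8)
The plan is to prove both identities by computing the entries of the matrix products $M^t M$ and $M M^t$ directly from the definition of the $0$-$1$ incidence matrix, exploiting the fact that each column of $M$ has exactly two nonzero entries (one for each endpoint of the corresponding edge) and that every entry lies in $\{0,1\}$.

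First I would treat $M^t M$, which is $m \times m$. Its $(i,j)$-entry is $\sum_{p=1}^n M_{pi} M_{pj}$, i.e. the number of vertices incident to both $e_i$ and $e_j$. On the diagonal ($i = j$) this counts the endpoints of $e_i$, which is exactly $2$ for every edge of a simple graph, so the diagonal of $M^t M$ is $2I$. Off the diagonal ($i \neq j$) the entry counts common endpoints of two distinct edges; since $G$ is simple, two distinct edges meet in at most one vertex, so the entry equals $1$ precisely when $e_i$ and $e_j$ share a vertex and $0$ otherwise. By the definition of the line graph this off-diagonal pattern is exactly $A(G_{\ell})$, and hence $M^t M = A(G_{\ell}) + 2I$.

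Next I would treat $M M^t$, which is $n \times n$. Its $(i,j)$-entry is $\sum_{q=1}^m M_{iq} M_{jq}$, the number of edges incident to both $v_i$ and $v_j$. The diagonal entry ($i=j$) counts the edges at $v_i$, namely the degree $d(v_i)$; under the $k$-regularity hypothesis each of these counts equals $k$, so the diagonal is $kI$. The off-diagonal entry ($i \neq j$) counts edges joining $v_i$ and $v_j$, and simplicity of $G$ again forces this to be $1$ when $v_i \sim v_j$ and $0$ otherwise, which is precisely $A(G)$. Therefore $M M^t = A(G) + kI$ for $k$-regular $G$ (and $M M^t = D(G) + A(G)$ in general).

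There is no substantive obstacle in this argument; it is purely a matter of bookkeeping the incidences. The only hypotheses that must be invoked with care are that $G$ is simple — so that two distinct edges share at most one endpoint and two distinct vertices are joined by at most one edge, which keeps all off-diagonal entries in $\{0,1\}$ — and, for the second identity, that regularity collapses the diagonal degrees to the constant $k$.
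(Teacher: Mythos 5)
Your proof is correct: the entry-by-entry counting of common endpoints (for $M^tM$) and common incident edges (for $MM^t$), together with simplicity of $G$ forcing off-diagonal entries into $\{0,1\}$, is exactly the standard argument. The paper itself gives no proof — it cites this as Lemma 6.16 of Bapat's \emph{Graphs and matrices} — and your computation matches that textbook proof, including the correct observation that in general $MM^t = D(G) + A(G)$, with regularity only needed to replace $D(G)$ by $kI$.
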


Since the eigenvalues of any $ n\times n $ symmetric matrix $ S $ are real, we denote and ordered the eigenvalues of $ S $ as follows:
	\begin{equation}\label{eq1.1}
		\lambda_{max}(S)=\lambda_1(S)\geq\lambda_2(S)\geq \cdots \geq \lambda_n(S)=\lambda_{min}(S).
	\end{equation}
	For a graph $ G $, sometimes we denote and arrange the eigenvalues of $A(G),\, L(G)$ and $ Q(G) $ as $ \rho_1(G)\geq \rho_2(G)\geq \cdots \geq \rho_n(G)$, $ \mu_1(G)\geq \mu_2(G)\geq \cdots \geq\mu_n(G)$ and $ q_1(G)\geq q_2(G)\geq \cdots \geq q_n(G)$, respectively. Next, we present the well known Weyl Theorem.
\begin{theorem}{\rm \cite[Theorem 4.3.1]{horn-john2}}\label{th2.1}
	If $ S_1 $ and $ S_2$ are two Hermitian matrices of order $ n $ and their eigenvalues are ordered as in \eqref{eq1.1}. Then $$ \lambda_{n+1-i}(S_1+S_2)\leq  \lambda_{n+1-i-j}(S_1)+\lambda_{j+1}(S_2), ~~~i=1,2, \dots, n; ~~j=0,1,\dots, n-i .$$
\end{theorem}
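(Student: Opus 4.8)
The plan is to derive this as a subadditive Weyl bound using orthonormal eigenbases together with a dimension count, rather than invoking the Courant--Fischer min-max as a black box. First I would untangle the indexing. Setting $k=n+1-i$, the hypotheses $i=1,\dots,n$ and $j=0,\dots,n-i$ translate into $1\le k\le n$ and $0\le j\le k-1$, while the index on $S_1$ becomes $n+1-i-j=k-j$ and the index on $S_2$ stays $j+1$, both of which lie in $\{1,\dots,n\}$. Thus the claim is equivalent to
\begin{equation*}
\lambda_k(S_1+S_2)\le \lambda_{k-j}(S_1)+\lambda_{j+1}(S_2),\qquad 1\le k\le n,\ 0\le j\le k-1,
\end{equation*}
and establishing this for every admissible pair $(k,j)$ proves the theorem.

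Next I would fix orthonormal eigenbases realizing the ordered spectra in \eqref{eq1.1}: let $\{x_\ell\}$, $\{y_\ell\}$, $\{z_\ell\}$ be orthonormal bases of eigenvectors of $S_1$, $S_2$, and $S_1+S_2$, with $S_1x_\ell=\lambda_\ell(S_1)x_\ell$ and analogously for the others. I would then form three subspaces matched to the three indices,
\begin{align*}
U&=\operatorname{span}\{x_{k-j},\dots,x_n\}, & \dim U&=n-(k-j)+1,\\
W&=\operatorname{span}\{y_{j+1},\dots,y_n\}, & \dim W&=n-j,\\
T&=\operatorname{span}\{z_1,\dots,z_k\}, & \dim T&=k.
\end{align*}
By the spectral theorem, every nonzero $v\in U$ satisfies $v^{*}S_1v\le\lambda_{k-j}(S_1)\,v^{*}v$ because the eigenvalues of $S_1$ acting on $U$ are all at most $\lambda_{k-j}(S_1)$; likewise every nonzero $v\in W$ satisfies $v^{*}S_2v\le\lambda_{j+1}(S_2)\,v^{*}v$, and every nonzero $v\in T$ satisfies $v^{*}(S_1+S_2)v\ge\lambda_k(S_1+S_2)\,v^{*}v$.

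Finally I would count dimensions. Summing gives
\begin{equation*}
\dim U+\dim W+\dim T=(n-k+j+1)+(n-j)+k=2n+1,
\end{equation*}
so applying $\dim(X\cap Y)\ge\dim X+\dim Y-n$ twice yields $\dim(U\cap W\cap T)\ge 2n+1-2n=1$. Hence there is a nonzero $v\in U\cap W\cap T$, and for that $v$ the three estimates combine into
\begin{equation*}
\lambda_k(S_1+S_2)\,v^{*}v\le v^{*}(S_1+S_2)v=v^{*}S_1v+v^{*}S_2v\le\bigl(\lambda_{k-j}(S_1)+\lambda_{j+1}(S_2)\bigr)v^{*}v,
\end{equation*}
and dividing by $v^{*}v>0$ gives the desired inequality. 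There is no analytic obstacle here; the whole argument is linear-algebraic. The only delicate point is the bookkeeping: one must verify that the three dimensions sum to exactly $2n+1$, one more than $2n$, so that the triple intersection is forced to be nontrivial, and that the shifted indices $k-j$ and $j+1$ remain in range throughout. I therefore expect the main effort to be precisely the index arithmetic carried out in the reduction step above.
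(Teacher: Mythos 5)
Your proof is correct. Note that the paper itself gives no proof of this statement: it is imported verbatim as Theorem 4.3.1 of Horn and Johnson's \emph{Matrix Analysis}, so there is no in-paper argument to compare against. Your reindexing to the form $\lambda_k(S_1+S_2)\le\lambda_{k-j}(S_1)+\lambda_{j+1}(S_2)$ is accurate (both shifted indices stay in range), the three quadratic-form bounds on $U$, $W$, $T$ are valid consequences of the spectral theorem, and the dimension count $\dim U+\dim W+\dim T=2n+1$ correctly forces a nonzero vector in the triple intersection, which yields the inequality. This is the classical subspace-intersection proof of Weyl's inequality --- essentially the argument in the cited source --- so nothing further is needed.
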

\begin{corollary}\label{cor1}
		If $ S_1 $ and $ S_2$ are two Hermitian matrices of order $ n $ and their eigenvalues are ordered as in \eqref{eq1.1}. Then $$ \lambda_{n}(S_1)+\lambda_1(S_2)\leq \lambda_1(S_1+S_2)\leq   \lambda_{1}(S_1)+\lambda_{1}(S_2).$$
\end{corollary}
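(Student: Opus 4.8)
The plan is to derive both inequalities directly from the Weyl inequality of Theorem \ref{th2.1}, observing that the form quoted there furnishes only an upper-bound estimate; consequently the right-hand inequality is an immediate index specialization, whereas the left-hand inequality will require a sign-flip argument.

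First I would obtain the right-hand inequality $\lambda_1(S_1+S_2)\le \lambda_1(S_1)+\lambda_1(S_2)$ by choosing the indices in Theorem \ref{th2.1} appropriately. Taking $i=n$ forces $n+1-i=1$ and, since $0\le j\le n-i=0$, restricts $j$ to the single admissible value $j=0$. Substituting $i=n,\ j=0$ into the inequality $\lambda_{n+1-i}(S_1+S_2)\le \lambda_{n+1-i-j}(S_1)+\lambda_{j+1}(S_2)$ collapses it to exactly $\lambda_1(S_1+S_2)\le \lambda_1(S_1)+\lambda_1(S_2)$, which is the upper bound.

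For the left-hand inequality I would exploit the decomposition $S_2=(S_1+S_2)+(-S_1)$ together with the upper bound just established. Applying that upper bound to the two Hermitian summands $S_1+S_2$ and $-S_1$ gives $\lambda_1(S_2)\le \lambda_1(S_1+S_2)+\lambda_1(-S_1)$. Since the eigenvalues of $-S_1$ are the negatives of those of $S_1$ listed in reverse order, we have $\lambda_1(-S_1)=-\lambda_n(S_1)$. Substituting this and rearranging yields $\lambda_n(S_1)+\lambda_1(S_2)\le \lambda_1(S_1+S_2)$, the desired lower bound.

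The only point requiring care — the mild ``obstacle'' here — is recognizing that the version of Weyl's theorem stated is one-directional, so the lower bound cannot be read off by a direct index substitution but must be recovered by passing to $-S_1$ and invoking $\lambda_1(-S_1)=-\lambda_n(S_1)$. Once this is noticed, both halves reduce to routine substitutions of admissible index values.
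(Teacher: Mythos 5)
Your proof is correct and takes exactly the route the paper intends: the paper states this result as an unproved corollary of Theorem \ref{th2.1}, and your index specialization $i=n$, $j=0$ gives the upper bound immediately. Your sign-flip argument for the lower bound, via $S_2=(S_1+S_2)+(-S_1)$ and $\lambda_1(-S_1)=-\lambda_n(S_1)$, is the same device the paper itself uses later (e.g.\ the step $\lambda_{j+1}(-B_\beta)=-\lambda_{n-j}(B_\beta)$ in the proof of Theorem \ref{Th3.1}), so nothing is missing.
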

Let us recall the following upper bound of the largest eigenvalue of a Laplacian matrix.
\begin{theorem}{\rm \cite{DA0}}\label{th2.2}
Let $ G $ be a graph with the largest Laplacian eigenvalue $\mu_1(G)$. Then $$ \mu_1(G)\leq \max_{v_iv_j\in E(G)}\,(d_i+d_j).$$
\end{theorem}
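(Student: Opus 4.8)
The plan is to realize $L(G)$ through an incidence-type factorization so that the edge set, rather than the vertex set, carries the spectral information: the quantity $d_i+d_j$ is exactly two more than the degree of the edge $v_iv_j$ in the line graph $G_\ell$, so a bound phrased over edges should come out of $G_\ell$. I would write $L=NN^{t}$, where $N$ is the $n\times m$ oriented (signed) incidence matrix of $G$, whose column for an edge $v_iv_j$ has a $+1$ and a $-1$ in rows $i,j$ and zeros elsewhere. Since $NN^{t}$ and $N^{t}N$ share the same nonzero eigenvalues and $L$ has a positive eigenvalue whenever $E(G)\neq\varnothing$, we get $\mu_1(G)=\lambda_1(NN^{t})=\lambda_1(N^{t}N)$; if $G$ is edgeless the asserted inequality is vacuous, so assume $E(G)\neq\varnothing$.

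First I would read off the structure of the $m\times m$ matrix $N^{t}N$. Its diagonal entries all equal $2$ (each edge-column has squared norm $2$), and its $(e,f)$ entry for distinct edges is $\pm1$ when $e$ and $f$ share a vertex and $0$ otherwise. For an edge $e=v_iv_j$, the number of edges sharing a vertex with $e$ is $(d_i-1)+(d_j-1)=d_i+d_j-2$, so the absolute off-diagonal row sum of row $e$ is at most $d_i+d_j-2$. Applying Gershgorin's disc theorem to $N^{t}N$ confines every eigenvalue to $\bigl[\,2-(d_i+d_j-2),\,2+(d_i+d_j-2)\,\bigr]$ for some edge $e=v_iv_j$, whence $\lambda_1(N^{t}N)\le \max_{v_iv_j\in E(G)}(d_i+d_j)$. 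Combining with $\mu_1(G)=\lambda_1(N^{t}N)$ finishes the argument.

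An alternative that leans on the tools already in hand: one first checks the comparison $\mu_1(G)\le q_1(G)$, which follows from the Rayleigh expressions $x^{t}Lx=\sum_{v_iv_j\in E}(x_i-x_j)^2$ and $x^{t}Qx=\sum_{v_iv_j\in E}(x_i+x_j)^2$ together with $|x_i-x_j|\le|x_i|+|x_j|$, and then bounds $q_1(G)$. Here Lemma \ref{lm2.1} gives $M^{t}M=A(G_\ell)+2I$ for the unsigned incidence matrix $M$, and since $MM^{t}=Q(G)$ we obtain $q_1(G)=\lambda_1(MM^{t})=\lambda_1(M^{t}M)=\rho_1(G_\ell)+2$. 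As the largest adjacency eigenvalue of any graph is at most its maximum degree, $\rho_1(G_\ell)\le\Delta(G_\ell)=\max_{v_iv_j\in E(G)}(d_i+d_j)-2$, which yields the same bound.

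I expect the only genuinely delicate point to be the passage from the vertex space to the edge space, i.e.\ the identity $\lambda_1(NN^{t})=\lambda_1(N^{t}N)$: one must argue that the largest eigenvalue is a shared \emph{nonzero} eigenvalue, which holds because $L\neq 0$ is positive semidefinite once $G$ has an edge. Everything afterwards is structural, and the crucial (slightly surprising) feature is that the orientation-dependent signs in $N^{t}N$ are irrelevant to Gershgorin, which sees only the absolute off-diagonal row sums; those are governed precisely by the line-graph degrees $d_i+d_j-2$. No convexity or continuity machinery from the main development is needed, so the result is a clean consequence of the incidence factorization.
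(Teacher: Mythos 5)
Your proposal is correct, but note that the paper itself never proves Theorem \ref{th2.2}: it is quoted as a known result with a citation (it is the classical Anderson--Morley bound, here attributed to \cite{DA0}), so there is no internal proof to compare against. What you have supplied is a complete, self-contained proof, and it is essentially the classical argument for this bound: factor $L=NN^{t}$ through the oriented incidence matrix, pass to the edge space via the shared nonzero spectrum of $NN^{t}$ and $N^{t}N$, and apply Gershgorin to $N^{t}N$, whose absolute off-diagonal row sums are exactly the line-graph degrees $d_i+d_j-2$. Both of your routes check out: the delicate points (the vacuous edgeless case, and the fact that $\mu_1(G)>0$ once $E(G)\neq\varnothing$ so that the largest eigenvalue really is a shared nonzero eigenvalue) are handled correctly, and the sign-insensitivity of Gershgorin is indeed what makes the oriented incidence matrix usable. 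One small remark on your alternative route: you invoke $MM^{t}=Q(G)$ for the unsigned incidence matrix as if it were given, but Lemma \ref{lm2.1} as stated in the paper only records $MM^{t}=A(G)+kI$ in the $k$-regular case; the general identity $MM^{t}=D(G)+A(G)$ is elementary (diagonal entries count incident edges, off-diagonal entries detect adjacency), so this is a presentational gap rather than a mathematical one, but it should be stated and verified rather than attributed to the lemma. Your second route also has the merit of passing through $\mu_1(G)\le q_1(G)=\rho_1(G_\ell)+2\le\Delta(G_\ell)+2$, which exhibits the bound as a statement about the line graph and connects it to the machinery the paper does develop.
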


\vspace*{2mm}
Let $ S= \left(\begin{array}{cc}
	S_{11} & S_{12}\\
	S_{21} & S_{22}
\end{array} \right)$ be a $ 2 \times 2 $ block matrix, where $ S_{11} $ and $ S_{22} $ are square matrices. If  $ S_{11} $ is nonsingular, then the Schur complement of $ S_{11} $ in $ S $ is defined to be the following matrix
\begin{equation}\label{eq1}
	S_{22}-S_{21}S_{11}^{-1}S_{12}.
\end{equation}

 Similarly, if $ S_{22} $ is nonsingular, then the Schur complement of $ S_{22} $ in $ S $ is $ S_{11}-S_{12}S_{22}^{-1}S_{21} $. Let us recall the Schur complement formula for the determinant.

\vspace*{2mm}
 \begin{theorem}{\rm \cite{Bapat}}\label{th2.3}
 Let $ S= \left(\begin{array}{cc}
 	S_{11} & S_{12}\\
 	S_{21} & S_{22}
 \end{array} \right)$ be a $ 2 \times 2 $ block matrix, where $ S_{11} $ and $ S_{22} $ are square matrices and $ S_{11} $ is nonsingular. Then $$ \det S=(\det S_{11})\,\det(S_{22}-S_{21}S_{11}^{-1}S_{12}).$$
 \end{theorem}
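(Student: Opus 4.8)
The plan is to prove the formula by exhibiting an explicit block factorization of $S$, after which the determinant splits as claimed. Since $S_{11}$ is nonsingular, the Schur complement $S_{22}-S_{21}S_{11}^{-1}S_{12}$ is well defined, and I would write down the identity
$$\begin{pmatrix} S_{11} & S_{12} \\ S_{21} & S_{22} \end{pmatrix} = \begin{pmatrix} I & 0 \\ S_{21}S_{11}^{-1} & I \end{pmatrix}\begin{pmatrix} S_{11} & S_{12} \\ 0 & S_{22}-S_{21}S_{11}^{-1}S_{12} \end{pmatrix}.$$
The verification is a direct block multiplication: the $(1,1)$, $(1,2)$, and $(2,1)$ blocks reproduce $S_{11}$, $S_{12}$, and $S_{21}$ immediately, while the $(2,2)$ block yields $S_{21}S_{11}^{-1}S_{12} + \big(S_{22}-S_{21}S_{11}^{-1}S_{12}\big) = S_{22}$, confirming the factorization. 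Note that the nonsingularity of $S_{11}$ is exactly what is needed to make both factors meaningful.

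Next I would take determinants on both sides and invoke the standard fact that the determinant of a block triangular matrix equals the product of the determinants of its diagonal blocks. The first factor on the right is block lower triangular with both diagonal blocks equal to the identity, so its determinant is $1$; the second factor is block upper triangular with diagonal blocks $S_{11}$ and $S_{22}-S_{21}S_{11}^{-1}S_{12}$, so its determinant is $(\det S_{11})\,\det(S_{22}-S_{21}S_{11}^{-1}S_{12})$. By multiplicativity of the determinant, $\det S = (\det S_{11})\,\det(S_{22}-S_{21}S_{11}^{-1}S_{12})$, which is the claim.

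There is no genuine obstacle here; the only ingredient beyond direct computation is the block triangular determinant rule, which is elementary. Should one prefer to avoid quoting it, I would reduce it to the Leibniz expansion, observing that a permutation contributes a nonzero term only if it preserves the block partition, so the sum factors over the two diagonal blocks. Thus the entire argument amounts to a single factorization followed by the product rule for determinants.
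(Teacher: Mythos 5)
Your proof is correct. The paper does not prove this statement at all---it is quoted as a known preliminary from the reference \cite{Bapat}---so there is no internal proof to compare against; your argument (the block factorization
$S=\left(\begin{smallmatrix} I & 0 \\ S_{21}S_{11}^{-1} & I \end{smallmatrix}\right)\left(\begin{smallmatrix} S_{11} & S_{12} \\ 0 & S_{22}-S_{21}S_{11}^{-1}S_{12} \end{smallmatrix}\right)$ followed by multiplicativity of the determinant and the block-triangular determinant rule) is exactly the standard textbook derivation of the Schur complement formula, and every step, including the Leibniz-expansion justification of the block-triangular rule, is sound.
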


A matrix is irreducible if it is not similar via a permutation to a block upper triangular matrix (that has more than one block of positive size). Note that the adjacency matrix of a connected graph is always irreducible. Let $ S=(s_{ij})_{n\times m} $ be a matrix, then we denote $ |S|:=(|s_{ij}|)_{n\times m} $.

\begin{theorem}{\rm \cite[Theorem 6.2.24]{horn-john2}}\label{th2.4}
	A square matrix $ S $ of order $ n $ is irreducible if and only if $ (I+|S|)^{n-1}>0$, entry-wise.
\end{theorem}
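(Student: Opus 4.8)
The plan is to reduce the statement to a purely combinatorial fact about walks in a directed graph. Since irreducibility is defined through permutation similarity, it depends only on the zero--nonzero pattern of the entries, and this pattern is identical for $S$ and for $|S|$; hence $S$ is irreducible if and only if $|S|$ is irreducible, and it suffices to analyze the nonnegative matrix $|S|$. I would associate to $|S|$ the directed graph $\Gamma$ on vertex set $\{1,\dots,n\}$ having an arc from $i$ to $j$ precisely when $(|S|)_{ij}\neq 0$. The key elementary observation is that for a nonnegative matrix $B$ and any integer $k\ge 1$, the $(i,j)$ entry of $B^k$ is strictly positive if and only if $\Gamma$ contains a directed walk of length exactly $k$ from $i$ to $j$. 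This follows by expanding $(B^k)_{ij}=\sum (B)_{i\,i_1}(B)_{i_1\,i_2}\cdots(B)_{i_{k-1}\,j}$ and noting that a sum of nonnegative terms is positive if and only if at least one term is positive.

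Next I would expand $(I+|S|)^{n-1}=\sum_{k=0}^{n-1}\binom{n-1}{k}\,|S|^k$ by the binomial theorem, which is valid because $I$ commutes with $|S|$. Since all the binomial coefficients are positive and each $|S|^k$ is entrywise nonnegative, the $(i,j)$ entry of $(I+|S|)^{n-1}$ is positive if and only if $(|S|^k)_{ij}>0$ for some $k\in\{0,1,\dots,n-1\}$, where the term $|S|^0=I$ contributes the diagonal. By the observation above, this is equivalent to the existence of a directed walk from $i$ to $j$ of length at most $n-1$, with the length-$0$ walk covering the case $i=j$. Thus $(I+|S|)^{n-1}>0$ entrywise if and only if every ordered pair of vertices is joined by a directed walk of length at most $n-1$.

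It then remains to match this walk condition with irreducibility, which is equivalent to strong connectivity of $\Gamma$. For the forward direction, if $S$ is irreducible then for each pair $i\neq j$ there is a directed path from $i$ to $j$; a path visits distinct vertices and therefore uses at most $n-1$ arcs, while the diagonal case is handled by the length-$0$ walk, giving $(I+|S|)^{n-1}>0$. Conversely, if $(I+|S|)^{n-1}>0$ then every ordered pair is connected by a walk, so $\Gamma$ is strongly connected and $S$ is irreducible. The main point requiring care is the length bound: one must argue that between two distinct vertices a shortest directed walk is in fact a path, hence has length at most $n-1$, which is exactly what allows the single exponent $n-1$ to suffice simultaneously for all pairs.
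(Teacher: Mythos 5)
Since the paper only cites this statement from Horn and Johnson and gives no proof of its own, there is nothing internal to compare against; your argument has to stand on its own. Its combinatorial core does: the reduction from $S$ to $|S|$, the fact that for a nonnegative matrix $B$ the entry $(B^k)_{ij}$ is positive exactly when $\Gamma$ has a walk of length $k$ from $i$ to $j$, the binomial expansion of $(I+|S|)^{n-1}$, and the remark that a shortest walk between distinct vertices is a path and hence has length at most $n-1$ are all correct and carefully argued. What these steps actually establish is the equivalence ``$(I+|S|)^{n-1}>0$ entrywise if and only if the digraph $\Gamma$ is strongly connected.''

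The gap is the step you dismiss in a single clause: ``irreducibility, which is equivalent to strong connectivity of $\Gamma$.'' The paper defines irreducibility as not being permutation-similar to a block upper triangular matrix with more than one block of positive size, and your proof never touches that definition; yet the bridge between it and strong connectivity is the one genuinely matrix-theoretic ingredient of the theorem, and in Horn and Johnson it is part of the very same Theorem 6.2.24 you are proving, so invoking it as known is close to circular. The missing argument runs as follows. If $P^tSP$ is block upper triangular with diagonal blocks of sizes $k$ and $n-k$ (both positive) and zero lower-left block, then there is no arc from any of the last $n-k$ permuted indices to any of the first $k$; since any directed path from the second group to the first group would have to use such an arc the first time it enters the first group, $\Gamma$ is not strongly connected. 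Conversely, if $\Gamma$ is not strongly connected, choose $i,j$ with no directed path from $i$ to $j$ and let $R$ be the set of vertices reachable from $i$ (including $i$ itself). Then $R$ and its complement are both nonempty, and there is no arc from $R$ to its complement, since such an arc would enlarge $R$; listing the vertices of the complement first and those of $R$ last gives a permutation exhibiting $S$ as block upper triangular with a zero lower-left block of positive size, so $S$ is reducible. With this paragraph supplied, your proof is complete.
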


\begin{theorem}{\rm \cite[Corollary 6.2.27]{horn-john2}}\label{th2.5}
Let $ S $ be an irreducibly diagonally dominant matrix of order $ n $. If $ S $ is Hermitian and every main diagonal entry is positive, then $ S $ is positive definite.	
\end{theorem}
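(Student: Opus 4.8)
The plan is to establish the two defining features of a positive definite Hermitian matrix separately: that all eigenvalues of $S$ are real and nonnegative, and that $0$ is not an eigenvalue. Since $S$ is Hermitian its eigenvalues are automatically real, so it suffices to show first that every eigenvalue is $\geq 0$ (positive semidefiniteness), and then that $S$ is nonsingular; together these force every eigenvalue to be strictly positive, which is exactly positive definiteness.

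For the semidefiniteness I would invoke the Gershgorin disc theorem. Writing $R_i=\sum_{j\neq i}|s_{ij}|$ for the deleted row sums, every eigenvalue $\lambda$ of $S$ satisfies $|\lambda-s_{ii}|\leq R_i$ for some index $i$. Diagonal dominance gives $s_{ii}=|s_{ii}|\geq R_i$ (the diagonal entries are positive, hence equal to their moduli), so the corresponding Gershgorin disc is contained in the closed right half-plane $\{z:\Re z\geq s_{ii}-R_i\geq 0\}$. As $\lambda$ is real, this yields $\lambda\geq 0$, and hence $S$ is positive semidefinite.

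The nonsingularity is where irreducibility and the single row of strict dominance enter. Suppose $Sx=0$ with $x\neq 0$, and choose $k$ with $|x_k|=\max_i|x_i|>0$; after scaling assume $|x_k|=1$ and $|x_i|\leq 1$ for all $i$. The $k$-th equation and diagonal dominance give the chain
$$|s_{kk}|=\Big|\sum_{j\neq k}s_{kj}x_j\Big|\leq\sum_{j\neq k}|s_{kj}||x_j|\leq\sum_{j\neq k}|s_{kj}|\leq|s_{kk}|,$$
forcing equality throughout; in particular $|x_j|=1$ for every $j$ with $s_{kj}\neq 0$. Now I would propagate: letting $J=\{i:|x_i|=1\}$, the same equality argument applied to any $j\in J$ shows $|x_l|=1$ whenever $s_{jl}\neq 0$, so $J$ is closed under the adjacency relation encoded by the nonzero off-diagonal entries of $S$. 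Because $S$ is irreducible (equivalently, by Theorem \ref{th2.4}, $(I+|S|)^{n-1}>0$, i.e.\ the associated directed graph is strongly connected), this relation reaches every index, so $J=\{1,\dots,n\}$ and $|x_i|=1$ for all $i$. Applying the equation for the row $k_0$ with strict dominance $|s_{k_0 k_0}|>R_{k_0}$ then gives $|s_{k_0 k_0}|\leq\sum_{j\neq k_0}|s_{k_0 j}||x_j|=R_{k_0}<|s_{k_0 k_0}|$, a contradiction. Hence $S$ is nonsingular, and combined with semidefiniteness every eigenvalue is strictly positive.

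The main obstacle is the propagation step: turning the local equality conclusion ($|x_j|=1$ for neighbors of $k$) into the global statement $J=\{1,\dots,n\}$. This requires carefully translating the combinatorial meaning of irreducibility---strong connectivity of the digraph on $\{1,\dots,n\}$ with an arc $i\to j$ whenever $s_{ij}\neq 0$---into a reachability argument, and checking that the equality cases in diagonal dominance are genuinely inherited along each arc. Once $|x_i|=1$ holds everywhere, the contradiction at the strictly dominant row is immediate, and merging nonsingularity with the already-established semidefiniteness completes the proof.
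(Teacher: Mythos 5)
The paper offers no internal proof of this statement to compare against: it is stated as a preliminary and cited to Horn and Johnson [Corollary 6.2.27]. Judged on its own, your proof is correct and complete. The Hermitian hypothesis gives real eigenvalues; Gershgorin together with weak diagonal dominance and positive diagonal entries confines them to $[0,\infty)$; and your null-vector argument --- equality throughout the triangle-inequality chain at a coordinate of maximal modulus forces $|x_j|=1$ for every $j$ with $s_{kj}\neq 0$, irreducibility propagates $|x_i|=1$ to every coordinate, and the row $k_0$ with strict dominance then yields the contradiction $R_{k_0}<|s_{k_0k_0}|\leq R_{k_0}$ --- is precisely the classical proof of Taussky's theorem that an irreducibly diagonally dominant matrix is nonsingular, and nonsingularity plus semidefiniteness gives positive definiteness. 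This is essentially a self-contained version of Horn and Johnson's own route: they first prove that, for an irreducible matrix, an eigenvalue lying on the boundary of the union of the Gershgorin discs must lie on the boundary of \emph{every} disc, and then observe that $0$ cannot satisfy this because the disc of the strictly dominant row stays at positive distance from $0$; your propagation step is exactly that boundary lemma specialized to the eigenvalue $0$ and a null vector. One point worth keeping explicit is the definition of ``irreducibly diagonally dominant'': weak dominance $|s_{ii}|\geq R_i$ in every row, strict dominance in at least one row, and irreducibility. Your proof uses all three hypotheses, each exactly where it is needed --- weak dominance everywhere for the Gershgorin step and the equality chain, the strict row for the final contradiction, and irreducibility (via the strong connectivity characterization, equivalently Theorem \ref{th2.4}) for the reachability argument --- so nothing is tacitly assumed.
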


\section{Positive semidefiniteness of $ B_\alpha(G) $}\label{sec1}
Let $ G $ be a graph with $ B_\alpha $-matrix $ B_\alpha(G):= \alpha A(G)+(1-\alpha)L(G)$. In this section we determine the values of $ \alpha $ for which the matrix $ B_\alpha(G) $ is positive semidefinite. As a consequence, we give an upper bound of the independence number. Results obtained in this section will be useful in the later sections.

 For simplicity, we use the notation $ B_\alpha $, $~ A ,~ L$, and $ D $ instead of $ B_\alpha(G),~ A(G), ~L(G)$, and $ D(G) $, respectively, when $ G $ is clear from the context. Some equivalent forms of $B_{\alpha}$ in terms of $A$, $ L $, and $D$ are as follows:
\begin{align*}
	B_{\alpha}&=\alpha A+(1-\alpha)L\\
	&=(2\alpha-1)A+(1-\alpha)D\\
	&=(1-2\alpha)L+\alpha D.
\end{align*}

Given two real matrices $ S=(s_{ij})_{m\times n} $ and $ M=(m_{ij})_{m\times n} $, we use the notation $ S\geq M $ if and only if $ s_{ij} \geq m_{ij} $ for all $ i,j $. For any connected graph $G$ with $n$ vertices and  $\alpha\,(\ne \frac{1}{2})\in[0,1]$, we have
\begin{align}\label{ieq1}
(I+|B_{\alpha}|)^{n-1}&=(I+|(2\alpha-1)A+(1-\alpha)D|)^{n-1}\nonumber \\
&=(I+|2\alpha-1|A+(1-\alpha)D)^{n-1} \nonumber \\
&\geq I+|2\alpha-1| A+\cdots+
|2\alpha-1|^{n-1}A^{n-1}.
\end{align}
Since $ G $ is connected, any two vertices $ v_i $ and $ v_j $ are joined by a path with length $ k\leq n-1 $. Therefore $ (i,j) $-th entry of $ A^k $, which counts the number of walks of length $ k $ connecting $ v_i $ and $ v_j$, is positive. Hence, from \eqref{ieq1}, $ (I+|B_{\alpha}|)^{n-1} \geq I+|2\alpha-1| A+\cdots+|2\alpha-1|^{n-1}A^{n-1}>0.$
Thus, by Theorem \ref{th2.4}, $B_{\alpha}$ is irreducible for $\alpha\,(\ne \frac{1}{2})\in[0,1]$.

	We begin the section with the following basic property of $ B_\alpha(G) $.
\begin{prop}
	For any $\alpha\in[0,1]$, eigenvalues of $B_{\alpha}(G)$ are real numbers.
\end{prop}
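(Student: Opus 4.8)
The plan is to reduce this to the spectral theorem for real symmetric matrices by verifying that $B_\alpha(G)$ is itself real symmetric for every fixed $\alpha\in[0,1]$. The key observation is that $B_\alpha$ is assembled from symmetric ingredients: since $G$ is undirected, its adjacency matrix $A=A(G)$ satisfies $A=A^t$, and the degree matrix $D=D(G)$ is diagonal and hence symmetric. Consequently $L=D-A$ is a difference of symmetric matrices and is therefore symmetric as well.

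First I would fix $\alpha\in[0,1]$ and note that $\alpha$ and $1-\alpha$ are real scalars, so that $B_\alpha=\alpha A+(1-\alpha)L$ is a real linear combination of the real symmetric matrices $A$ and $L$. A real linear combination of symmetric matrices is again symmetric, which one checks directly from
\[
B_\alpha^t=\alpha A^t+(1-\alpha)L^t=\alpha A+(1-\alpha)L=B_\alpha .
\]
Hence $B_\alpha$ is a real symmetric matrix. I would then invoke the standard fact, already used implicitly when ordering eigenvalues in \eqref{eq1.1}, that every real symmetric (equivalently, Hermitian) matrix has only real eigenvalues. Applying this to $B_\alpha$ yields the claim.

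As for the main obstacle: there is none of substance. The statement is an elementary consequence of symmetry, and the only point worth recording cleanly is that each of $A$, $D$, and therefore $L$ is symmetric, so that the convex combination $B_\alpha$ inherits symmetry; the reality of the spectrum is then immediate from the spectral theorem.
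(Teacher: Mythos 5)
Your proposal is correct and follows exactly the paper's own argument: both observe that $A$ and $L$ are real symmetric, deduce that the real linear combination $B_\alpha=\alpha A+(1-\alpha)L$ is symmetric, and conclude reality of the eigenvalues from the spectral theorem for symmetric matrices. Your write-up merely adds the (harmless) extra detail that $L=D-A$ inherits symmetry from $D$ and $A$.
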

\begin{proof}
	Since $A(G)$ and $L(G)$ are symmetric matrices, so $B_{\alpha}(G)=\alpha A(G)+(1-\alpha)L(G)$ is also a symmetric matrix. Hence, all the eigenvalues of $B_{\alpha}$ are real numbers for any $\alpha\in[0,1]$.
\end{proof}

\vspace*{3mm}
For a graph $ G $, let $ \lambda_1(B_\alpha)\geq \cdots \geq \lambda_n(B_\alpha)$ be the eigenvalues of $ B_\alpha(G) $. In the following theorem, we prove that $\lambda_k(B_{\alpha})$ is uniformly continuous for $\alpha\in[0,1]$.

\begin{theorem}\label{Th3.1}
	Let $ B_\alpha $ be the $ B_\alpha $-matrix of a graph $ G $ with $ n $ vertices. Then, for $k\in\{1,2,\ldots,n\}$, the mapping $f_G:[0,1]\rightarrow\mathbb{R}$ defined as $f_G(\alpha)=\lambda_k(B_{\alpha})$ is a uniformly continuous function.	
\end{theorem}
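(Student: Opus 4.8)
The plan is to show that $f_G$ is in fact Lipschitz continuous on $[0,1]$, which is strictly stronger than uniform continuity. The starting observation is that $B_\alpha$ is an affine function of $\alpha$: since $B_\alpha=\alpha A+(1-\alpha)L=L+\alpha(A-L)$, for any $\alpha,\beta\in[0,1]$ one has the clean identity $B_\alpha-B_\beta=(\alpha-\beta)(A-L)$. Thus comparing $B_\alpha$ with $B_\beta$ amounts to perturbing the fixed symmetric matrix $B_\beta$ by a scalar multiple of the fixed symmetric matrix $A-L$, with perturbation magnitude controlled linearly by $|\alpha-\beta|$.

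First I would invoke Weyl's theorem to convert this perturbation into an eigenvalue bound. Writing $S_1=B_\beta$ and $S_2=(\alpha-\beta)(A-L)$, Theorem \ref{th2.1} with $j=0$ (and reindexing via $k=n+1-i$) gives $\lambda_k(B_\alpha)\le\lambda_k(B_\beta)+\lambda_1(S_2)$ for every $k\in\{1,\ldots,n\}$. Applying the same inequality to the decomposition $B_\beta=B_\alpha+(-S_2)$ and using $\lambda_1(-S_2)=-\lambda_n(S_2)$ yields the matching lower bound $\lambda_k(B_\alpha)\ge\lambda_k(B_\beta)+\lambda_n(S_2)$. Together these give
\[
\lambda_k(B_\beta)+\lambda_n(S_2)\le\lambda_k(B_\alpha)\le\lambda_k(B_\beta)+\lambda_1(S_2).
\]

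Second, I would bound the two extreme eigenvalues of $S_2$ uniformly in $|\alpha-\beta|$. Setting $c_G:=\max_{1\le i\le n}|\lambda_i(A-L)|$, the spectral radius of the fixed symmetric matrix $A-L=2A-D$, which depends only on $G$, one checks that both $|\lambda_1(S_2)|$ and $|\lambda_n(S_2)|$ are at most $|\alpha-\beta|\,c_G$. Substituting into the displayed two-sided inequality gives $|f_G(\alpha)-f_G(\beta)|=|\lambda_k(B_\alpha)-\lambda_k(B_\beta)|\le c_G\,|\alpha-\beta|$, so $f_G$ is $c_G$-Lipschitz, and in particular uniformly continuous on $[0,1]$.

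The only mildly delicate point is extracting the general-$k$, two-sided Weyl estimate from the form of Theorem \ref{th2.1}, since Corollary \ref{cor1} records only the case $k=1$; this is handled by the $j=0$ reindexing above. One must also keep track of signs: because $\alpha-\beta$ may be negative, the roles of $\lambda_1$ and $\lambda_n$ of $(\alpha-\beta)(A-L)$ interchange with the sign of $\alpha-\beta$, but in either case both quantities are bounded in absolute value by $|\alpha-\beta|\,c_G$, leaving the Lipschitz estimate unaffected. I expect no genuine obstacle beyond this bookkeeping.
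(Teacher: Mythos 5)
Your proposal is correct and follows essentially the same route as the paper: both establish a Lipschitz bound $|\lambda_k(B_\alpha)-\lambda_k(B_\beta)|\le C\,|\alpha-\beta|$ by applying Weyl's inequality (Theorem \ref{th2.1}) to the affine perturbation $B_\alpha-B_\beta$, from which uniform continuity is immediate. The only differences are cosmetic: you keep the perturbation as the single matrix $(\alpha-\beta)(A-L)$ and bound its spectral radius, obtaining the constant $\rho(2A-D)$, whereas the paper writes $B_\alpha-B_\beta=2(\beta-\alpha)L+(\alpha-\beta)D$ and invokes Corollary \ref{cor1} to get the constant $2\lambda_1(L)+\lambda_1(D)$, and you extract the general-$k$ two-sided estimate directly from the $j=0$ case of Weyl rather than through the paper's intermediate inequality.
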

\begin{proof}
	Let $ L$ and $ D $ be the Laplacian and the degree matrix of $ G $, respectively. Then, for any $\alpha,\beta\in[0,1]$, we have $B_{\alpha}-B_{\beta}=2(\beta-\alpha)L+(\alpha-\beta)D$. Then, for any $i\in\{1,2,\ldots,n\}$ and $j\in\{0,1,\ldots,n-i\}$, using Theorem \ref{th2.1}, we obtain
	\begin{align*}
		\lambda_{n+1-i}(B_{\alpha})&=\lambda_{n+1-i}(B_{\alpha}+B_{\beta}-B_{\beta})\\
		&\leq \lambda_{n+1-i-j}(B_{\alpha}+B_{\beta})+\lambda_{j+1}(-B_{\beta})\\
		&=\lambda_{n+1-i-j}(B_{\alpha}+B_{\beta})-\lambda_{n-j}(B_{\beta}).
	\end{align*}
	Thus, for any $i\in\{1,2,\ldots,n\}$ and $j\in\{0,1,\ldots,n-i\}$,
	\begin{align}
		\lambda_{n+1-i}(B_{\alpha})+\lambda_{n-j}(B_{\beta})&\leq \lambda_{n+1-i-j}(B_{\alpha}+B_{\beta}) \quad \text{ for all } \alpha,\beta\in[0,1] \label{eq3.1.1} .
	\end{align}
Assume that $\alpha\leq\beta$.	By using \eqref{eq3.1.1} and Corollary \ref{cor1}, we compute
	\begin{align*}
		\lambda_k(B_{\alpha})-\lambda_k(B_{\beta})
		&=\lambda_{n+1-k}(-B_{\beta})+\lambda_{n-(n-k)}(B_{\alpha})\\
		&\leq \lambda_{n+1-k-(n-k)}(-B_{\beta}+B_{\alpha})\\
		&= \lambda_{1}(B_{\alpha}-B_{\beta})\\
		&\leq \lambda_1(2(\beta-\alpha)L)+\lambda_1((\alpha-\beta)D)\\
		&=2(\beta-\alpha)\lambda_1(L)+(\alpha-\beta)\lambda_n(D)\\
		&\leq |\alpha-\beta|(2\lambda_1(L)+\lambda_1(D)).
	\end{align*}
	Also, by using \eqref{eq3.1.1} and Corollary \ref{cor1}, we obtain
	\begin{align*}
		\lambda_k(B_{\beta})-\lambda_k(B_{\alpha})&\leq\lambda_1(B_{\beta}-B_{\alpha})\\
		&\leq \lambda_1(2(\alpha-\beta)L)+\lambda_1((\beta-\alpha)D)\\
		&=2(\alpha-\beta)\lambda_n(L)+(\beta-\alpha)\lambda_1(D)\\
		&\leq|\alpha-\beta|(2\lambda_1(L)+\lambda_1(D)).
	\end{align*}
Thus, $|\lambda_k(B_{\beta})-\lambda_k(B_{\alpha})|\leq |\alpha-\beta|(2\lambda_1(L)+\lambda_1(D))$. Hence, the mapping $f_G$ is uniformly continuous.
\end{proof}

Note that for a graph $ G $ (with atleast an edge) on $ n$ vertices, $\lambda_n(B_\frac{1}{2})=\frac{1}{2}\lambda_n(D)\geq0$ and $ \lambda_n(B_1)=\lambda_n(A)\leq0 $. By Theorem \ref{Th3.1}, $ \lambda_n(B_\alpha) $ is continuous, so there exists a $ \beta\in (0,1) $ such that $ \lambda_n(B_\beta)=0 $. Therefore, for a graph $ G $ (with atleast an edge) on $ n$ vertices, we define $ \beta_o(G):=\max\{ \beta \in (0,1): \lambda_n(B_\beta)=0\}$. It is simply denoted by $ \beta_o $ if the graph $ G $ is understood from the context.

\begin{theorem}\label{Th3.2}
	If $G$ is a connected graph with $n\,(>1)$ vertices, then $B_{\alpha}$ is positive definite for $ \alpha \in(0,\frac{2}{3})$.
\end{theorem}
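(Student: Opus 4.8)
The plan is to establish positive definiteness through a diagonal dominance argument, invoking the irreducibility of $B_\alpha$ already verified in the discussion at the beginning of this section together with Theorem~\ref{th2.5}. The starting point is the equivalent representation $B_\alpha=(2\alpha-1)A+(1-\alpha)D$ recorded above, which cleanly separates the diagonal from the off-diagonal structure: since $A$ has zero diagonal and $D$ is diagonal, the $i$-th diagonal entry of $B_\alpha$ equals $(1-\alpha)d_i$, while each off-diagonal entry in row $i$ equals $2\alpha-1$ for a neighbor of $v_i$ and $0$ otherwise, so the $i$-th off-diagonal absolute row sum is $|2\alpha-1|\,d_i$.

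First I would treat $\alpha\in(0,\tfrac{2}{3})$ with $\alpha\neq\tfrac12$. Because $G$ is connected with $n>1$, every vertex satisfies $d_i\geq 1$, so each diagonal entry $(1-\alpha)d_i$ is strictly positive. The diagonal dominance requirement then reduces to the scalar inequality $1-\alpha\geq|2\alpha-1|$: for $\alpha\in(0,\tfrac12)$ this reads $1-\alpha>1-2\alpha$, which holds since $\alpha>0$, and for $\alpha\in(\tfrac12,\tfrac23)$ it reads $1-\alpha>2\alpha-1$, i.e.\ $\alpha<\tfrac23$, which again holds. In both ranges the inequality is strict, so $B_\alpha$ is strictly diagonally dominant at every row. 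Combining this with the irreducibility of $B_\alpha$ for $\alpha\neq\tfrac12$ established earlier and the fact that $B_\alpha$ is Hermitian with positive main diagonal, Theorem~\ref{th2.5} yields that $B_\alpha$ is positive definite.

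The value $\alpha=\tfrac12$ must be handled separately, since there $B_{1/2}=\tfrac12 D$ is diagonal and hence reducible, so Theorem~\ref{th2.5} does not apply directly; but this case is immediate, as $\tfrac12 D$ has positive diagonal entries $\tfrac12 d_i\geq\tfrac12$ and is therefore positive definite. Together these cases cover all of $(0,\tfrac23)$.

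I do not expect a genuine obstacle here; the substance of the argument lies in choosing the representation $B_\alpha=(2\alpha-1)A+(1-\alpha)D$ and carefully solving the dominance inequality $1-\alpha\geq|2\alpha-1|$, whose solution set $[0,\tfrac23]$ explains the bound exactly. The more delicate conceptual point is understanding why the interval is \emph{open}: at the endpoints $\alpha=0$ and $\alpha=\tfrac23$ the inequality holds only with equality in every row, giving mere weak dominance, and correspondingly $B_0=L$ and $B_{2/3}=\tfrac13 Q$ (the latter singular whenever $G$ is bipartite) are only positive semidefinite, so strict dominance, and with it the hypothesis of Theorem~\ref{th2.5}, necessarily fails there.
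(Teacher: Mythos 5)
Your proof is correct. On the subinterval $(\tfrac12,\tfrac23)$ it is literally the paper's argument: write $B_\alpha=(2\alpha-1)A+(1-\alpha)D$, check strict diagonal dominance row by row, and invoke irreducibility together with Theorem~\ref{th2.5}. The genuine difference is on $(0,\tfrac12]$. The paper switches to the decomposition $B_\alpha=(1-2\alpha)L+\alpha D$ and applies the Weyl-type bound of Corollary~\ref{cor1}, obtaining $\lambda_n(B_\alpha)\geq(1-2\alpha)\lambda_n(L)+\alpha\lambda_n(D)=\alpha\,\lambda_n(D)>0$; this leans on the spectral facts $\lambda_n(L)=0$ and $\lambda_n(D)=\delta\geq 1$, and has the advantage of covering $\alpha=\tfrac12$ with no special treatment, since Weyl's inequality needs no irreducibility. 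You instead observe that the dominance inequality $(1-\alpha)d_i>|2\alpha-1|\,d_i$ holds throughout $(0,\tfrac23)$ (using $d_i\geq 1$ from connectedness and $n>1$), so a single Gershgorin-type argument handles both sides of $\tfrac12$; the price is that you must excise the point $\alpha=\tfrac12$, where $B_{1/2}=\tfrac12 D$ is diagonal and hence reducible, so Theorem~\ref{th2.5} as stated does not apply — and you correctly dispose of that case by noting a positive diagonal matrix is trivially positive definite. Your route is thus more uniform and more elementary (no Weyl, no Laplacian positive semidefiniteness), while the paper's avoids any case analysis around $\tfrac12$; your closing observation that dominance degenerates to equality in every row exactly at $\alpha=0$ and $\alpha=\tfrac23$, matching the singularity of $L$ and of $\tfrac13Q$ for bipartite $G$, is a correct and worthwhile explanation of why the interval is open.
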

\begin{proof}
 First we assume that $0<\alpha\leq\frac{1}{2}$. Since $G$ is connected and $ \lambda_{n}(B_\alpha)=-\lambda_{1}(-B_\alpha)$, by Corollary \ref{cor1}, we obtain
\begin{align*}
	\lambda_n(B_{\alpha})&=\lambda_n((1-2\alpha)L+\alpha D)\\
	&\geq(1-2\alpha)\lambda_n(L)+\alpha\lambda_n(D)\\
	&=\alpha\lambda_n(D)>0.
\end{align*}
Thus, $B_{\alpha}$ is positive definite for $\alpha \in (0,\frac{1}{2}]$.

\vspace*{3mm}
\noindent Next we assume that $\frac{1}{2}<\alpha<\frac{2}{3}$. Then $0<2\alpha-1<1-\alpha$. Now $B_{\alpha}=(2\alpha-1)A+(1-\alpha)D$. Let $ (B_\alpha)_{ij} $ be the $ (i,j) $-th entry of $ B_\alpha$. Then, for $i\in\{1,2,\ldots,n\}$, we have
\begin{align*}
	|(B_{\alpha})_{ii}|&=|(1-\alpha)\,d_i|=(1-\alpha)\,d_i>(2\alpha-1)\,d_i=|2\alpha-1|\,d_i=\sum_{j=1,~ j\neq i}^n|(B_{\alpha})_{ij}|.
\end{align*}
Thus, $B_{\alpha}$ is strictly diagonally dominant with positive diagonal entries. Also, $B_{\alpha}$ is irreducible. Therefore, by Theorem \ref{th2.5}, $B_{\alpha}$ is positive definite for $\alpha \in (\frac{1}{2},\frac{2}{3})$.
\end{proof}
\begin{corollary}\label{Cor 3.2}
	If $G$ is a graph with no isolated vertices, then $B_{\alpha}$ is positive definite for\break $ \alpha \in(0,\frac{2}{3}) $.
\end{corollary}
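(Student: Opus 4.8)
The plan is to reduce this to the connected case already established in Theorem~\ref{Th3.2} by passing to the connected components of $G$. Since $G$ has no isolated vertices, I will write $G$ as the disjoint union of its connected components $G_1, G_2, \ldots, G_k$, each of which necessarily has more than one vertex: every vertex has degree at least one, so no component can consist of a single point. I will then exploit the fact that both $A(G)$ and $L(G)$, and hence $B_\alpha(G)$, respect this decomposition.

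Concretely, after relabelling the vertices so that those lying in the same component appear consecutively, $A(G)$ and $L(G)$ are simultaneously block diagonal, with the $i$-th diagonal blocks being $A(G_i)$ and $L(G_i)$. Consequently $B_\alpha(G) = \alpha A(G) + (1-\alpha)L(G)$ is block diagonal as well, equal to the direct sum $\bigoplus_{i=1}^{k} B_\alpha(G_i)$. The spectrum of such a direct sum is the union of the spectra of the blocks, so $B_\alpha(G)$ is positive definite if and only if every $B_\alpha(G_i)$ is. Applying Theorem~\ref{Th3.2} to each component $G_i$, which is a connected graph on $n_i > 1$ vertices, shows that each block $B_\alpha(G_i)$ is positive definite for $\alpha \in (0, \tfrac{2}{3})$, and I would conclude that their direct sum $B_\alpha(G)$ is positive definite on the same range.

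There is no serious obstacle in this argument; the only point deserving care is the precise role of the hypothesis. The assumption of no isolated vertices is exactly what forces each component to have at least two vertices, which is the condition $n > 1$ required by Theorem~\ref{Th3.2}. Were an isolated vertex present, its $1 \times 1$ block in $B_\alpha(G)$ would be the zero scalar, destroying positive definiteness; thus the hypothesis cannot be weakened, and checking that it furnishes exactly the applicability of the connected case is the crux of the reduction.
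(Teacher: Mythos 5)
Your proof is correct and is exactly the argument the paper intends: the corollary is stated as an immediate consequence of Theorem~\ref{Th3.2}, obtained by decomposing $G$ into connected components (each with more than one vertex, since there are no isolated vertices) and observing that $B_\alpha(G)$ is the direct sum of the $B_\alpha(G_i)$. Your additional remark on why the hypothesis cannot be weakened is a nice touch but does not change the substance.
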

Next corollary gives a lower bound of $\beta_o(G)$. For simplicity, we use $ \beta_o $ instead of $ \beta_o(G) $.
\begin{corollary}
	If $G$ is a graph with no isolated vertices, then $\beta_o\geq\frac{2}{3}$.
\end{corollary}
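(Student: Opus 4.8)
The plan is to read this off almost immediately from the preceding Corollary~\ref{Cor 3.2}, using only the definition of $\beta_o$. First I would note that a graph $G$ with no isolated vertices necessarily has $n>1$ and at least one edge, so that $\beta_o$ is well-defined in the first place; indeed, as observed just before Theorem~\ref{Th3.2}, one has $\lambda_n(B_{1/2})=\frac{1}{2}\lambda_n(D)>0$ while $\lambda_n(B_1)=\lambda_n(A)\le 0$, and the continuity of $\alpha\mapsto\lambda_n(B_\alpha)$ established in Theorem~\ref{Th3.1} then forces a zero of this map somewhere in $(0,1)$. Thus the set $\{\beta\in(0,1):\lambda_n(B_\beta)=0\}$ is nonempty and its maximum $\beta_o$ exists.

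The key observation is that positive definiteness is incompatible with a vanishing smallest eigenvalue: if $B_\alpha$ is positive definite, then every eigenvalue is strictly positive, so in particular $\lambda_n(B_\alpha)>0$ and hence $\lambda_n(B_\alpha)\ne 0$. By Corollary~\ref{Cor 3.2}, $B_\alpha$ is positive definite for every $\alpha\in\bigl(0,\frac{2}{3}\bigr)$, so $\lambda_n(B_\alpha)>0$ throughout this interval. Consequently no $\beta\in\bigl(0,\frac{2}{3}\bigr)$ can satisfy $\lambda_n(B_\beta)=0$, which means the zero set of $\lambda_n(B_\cdot)$ inside $(0,1)$ is disjoint from $\bigl(0,\frac{2}{3}\bigr)$.

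Combining these two points, the maximum $\beta_o=\max\{\beta\in(0,1):\lambda_n(B_\beta)=0\}$ must lie in $\bigl[\frac{2}{3},1\bigr)$, whence $\beta_o\ge\frac{2}{3}$, as claimed. I do not anticipate any genuine obstacle here, since the substantive work has already been front-loaded into the positive-definiteness statement of Theorem~\ref{Th3.2} and Corollary~\ref{Cor 3.2}; the only point requiring a moment of care is confirming that the defining set of $\beta_o$ is nonempty, and that is precisely the continuity-plus-sign-change argument already recorded in the paragraph preceding Theorem~\ref{Th3.2}.
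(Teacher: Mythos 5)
Your proposal is correct and follows essentially the same route as the paper: continuity of $\lambda_n(B_\alpha)$ from Theorem~\ref{Th3.1}, positive definiteness on $\bigl(0,\frac{2}{3}\bigr)$ from Corollary~\ref{Cor 3.2} (so no zero of $\lambda_n$ can lie there), and the sign at $\alpha=1$ to guarantee the zero set defining $\beta_o$ is nonempty. Your extra care about well-definedness of $\beta_o$ merely makes explicit the sign-change argument the paper already records when defining $\beta_o$, so there is no substantive difference.
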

\begin{proof}
	By Theorem \ref{Th3.1}, $ f_G(\alpha):=\lambda_n(B_\alpha) $ is continuous. By Corollary \ref{Cor 3.2}, $ \lambda_n(B_\alpha) >0 $ for\break $ \alpha\in(0, \frac{2}{3}) $. Also, $ \lambda_n(B_1)<0 $. Therefore, $ \beta_o\geq \frac{2}{3} $.
\end{proof}

\begin{theorem}
 Let $ G $ be a graph with no isolated vertices. Then $ B_\alpha $ is positive semidefinite if and only if $ \alpha \in [0, \beta_o]$.
\end{theorem}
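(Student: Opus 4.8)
The plan is to reduce positive semidefiniteness to the sign of the smallest eigenvalue and then to exploit a concavity property of $\lambda_n(B_\alpha)$ as a function of $\alpha$. Since $B_\alpha$ is symmetric, $B_\alpha$ is positive semidefinite if and only if $\lambda_n(B_\alpha)\ge 0$, so it suffices to determine the set $S:=\{\alpha\in[0,1]:\lambda_n(B_\alpha)\ge 0\}$ and to show $S=[0,\beta_o]$. Write $g(\alpha):=\lambda_n(B_\alpha)$. The key observation I would record first is that $g$ is \emph{concave} on $[0,1]$: writing $B_\alpha=L+\alpha(A-L)$ and using the Rayleigh characterization
\[
g(\alpha)=\min_{x\neq 0}\frac{x^t B_\alpha x}{x^t x}=\min_{x\neq 0}\frac{x^t L x+\alpha\,x^t(A-L)x}{x^t x},
\]
we see that $g$ is a pointwise infimum of functions that are affine in $\alpha$, and hence concave.

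Granting concavity, the ``if'' direction is immediate. We already know $g(0)=\lambda_n(L)=0$ and, by definition of $\beta_o$, $g(\beta_o)=0$. For any $\alpha\in[0,\beta_o]$ write $\alpha=(1-t)\cdot 0+t\,\beta_o$ with $t\in[0,1]$; concavity gives $g(\alpha)\ge (1-t)\,g(0)+t\,g(\beta_o)=0$, so $B_\alpha$ is positive semidefinite. In other words, the chord joining the two boundary zeros lies on the $\alpha$-axis, and the graph of $g$ stays above it.

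For the ``only if'' direction I would use continuity (Theorem \ref{Th3.1}) together with the maximality built into the definition of $\beta_o$. Since $G$ has no isolated vertices it has an edge, so $\operatorname{trace}A=0$ with $A\neq 0$ forces $g(1)=\lambda_n(A)<0$. Suppose, for contradiction, that some $\alpha\in(\beta_o,1]$ has $g(\alpha)\ge 0$. If $g(\alpha)=0$, then $\alpha$ is a zero of $g$ in $(0,1)$ exceeding $\beta_o$, contradicting the definition of $\beta_o$; if $g(\alpha)>0$, then since $g(1)<0$ and $g$ is continuous, the intermediate value theorem yields a zero of $g$ in $(\alpha,1)\subset(\beta_o,1)$, again contradicting the maximality of $\beta_o$. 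Hence $g(\alpha)<0$ for every $\alpha\in(\beta_o,1]$, and $S=[0,\beta_o]$ as claimed.

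The only genuinely substantive step is the concavity observation; everything afterwards is a soft argument. I expect the main conceptual obstacle to be precisely this recognition, since a priori the continuity result alone does not preclude $g$ from dipping below zero and returning to the nonnegative range inside $[0,\beta_o]$, which would make $S$ a union of intervals rather than the single interval $[0,\beta_o]$. Concavity is exactly what rules this out, and it simultaneously pins down $\sup S=\beta_o$: any $\alpha\in(0,1)$ with $g(\alpha)=0$ satisfies $\alpha\le\beta_o$ by definition, while $g(\beta_o)=0$ shows $\beta_o\in S$, forcing $\beta_o\le\sup S$.
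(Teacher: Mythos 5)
Your proof is correct, but its forward direction takes a genuinely different route from the paper's. The paper proves that $\alpha\in[0,\beta_o]$ implies positive semidefiniteness in two pieces: for $\alpha\in(0,\frac{2}{3})$ it invokes positive definiteness (Corollary \ref{Cor 3.2}, which rests on irreducibility and strict diagonal dominance), and for $\alpha\in[\frac{2}{3},\beta_o]$ it manipulates the quadratic form ${\bf x}^tB_\alpha{\bf x}=\alpha\sum_i d_ix_i^2+(1-2\alpha)\sum_{v_iv_j\in E(G)}(x_i-x_j)^2$, using that $t\mapsto\frac{t}{2t-1}$ is decreasing on $(\frac12,\infty)$ to transfer nonnegativity of ${\bf x}^tB_{\beta_o}{\bf x}$ to ${\bf x}^tB_\alpha{\bf x}$. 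You replace both cases by the single observation that $\lambda_n(B_\alpha)$ is concave in $\alpha$ (a pointwise infimum of functions affine in $\alpha$), so that $g(0)=\lambda_n(L)=0$ and $g(\beta_o)=0$ force $g\ge 0$ on all of $[0,\beta_o]$. This is cleaner and slightly more robust: it needs no connectivity or diagonal-dominance input, no case split at $\alpha=\frac{2}{3}$, and it isolates the structural reason the PSD set is an interval --- which, as you rightly note, continuity alone cannot guarantee. In fact the paper's ratio computation is a disguised instance of the same phenomenon: for fixed ${\bf x}$ the form ${\bf x}^tB_\alpha{\bf x}$ is affine in $\alpha$ and known nonnegative at $\alpha=0$ and $\alpha=\beta_o$, and the paper's inequality chain is precisely the interpolation between those two points, written out after dividing by $2\alpha-1$. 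Your converse direction (continuity from Theorem \ref{Th3.1}, $\lambda_n(B_1)<0$, the intermediate value theorem, and the maximality built into $\beta_o$) is essentially identical to the paper's.
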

\begin{proof}
Let $ G $ be a graph with $ n $ vertices $ \{ v_1, v_2, \dots, v_n\} $. Set $ \beta_o:=\beta_o(G) $. Suppose $ \alpha\in [0,\beta_o] $. For $\alpha\in(0,\frac{2}{3})$, by Corollary \ref{Cor 3.2}, $B_{\alpha}$ is positive definite. Assume that $\frac{2}{3}\leq\alpha\leq\beta_o$. Then $-\alpha\geq-\beta_o$, that is, $\alpha\,(2\beta_o-1)\geq\beta_o\,(2\alpha-1)$ which implies $\frac{\alpha}{2\alpha-1}\geq\frac{\beta_o}{2\beta_o-1}$. Note that for any ${\bf x}=(x_1,x_2,\ldots,x_n)^t\in\mathbb{R}^n$, we obtain
\begin{align}\label{eq2}
	{\bf x}^t\,B_{\alpha}\,{\bf x}&\,=\,{\bf x}^t\,\alpha D\, {\bf x}+{\bf x}^t\,(1-2\alpha)L\,{\bf x} \nonumber\\
	&\,=\,\alpha\,\sum_{i=1}^n\,d_i\,x_i^2+(1-2\alpha)\,\sum_{v_jv_i\in E(G)}\,(x_i-x_j)^2.
\end{align}
Then, for any ${\bf x}=(x_1, x_2, \dots, x_n)^t\in\mathbb{R}^n$ and using \eqref{eq2}, we obtain
  $$0\leq {\bf x}^tB_{\beta_o}{\bf x}=\beta_o\,\sum\limits_{i=1}^n\,d_i\,x_i^2+\break (1-2\beta_o)\sum\limits_{v_j v_i\in E(G)}(x_i-x_j)^2$$
  which implies
  $$\sum\limits_{v_jv_i\in E(G)}(x_i-x_j)^2\leq\frac{\beta_o}{2\beta_o-1}\,\sum\limits_{i=1}^n\,d_i\,x_i^2\leq\frac{\alpha}{2\alpha-1}\,\sum\limits_{i=1}^n\,d_i\,x_i^2.$$
  Thus,
  $${\bf x}^tB_{\alpha}{\bf x}=\alpha\sum\limits_{i=1}^n\,d_i\,x_i^2+(1-2\alpha)\sum\limits_{v_jv_i\in E(G)}(x_i-x_j)^2\geq0 \,\mbox{ for any }~{\bf x}\in\mathbb{R}^n.$$
  Hence,
  $$\lambda_n(B_{\alpha})=\min\{{\bf x}^tB_{\alpha}{\bf x}:{\bf x}\in\mathbb{R}^n,\|{\bf x}\|=1\}\geq 0.$$
  Therefore, $B_{\alpha}$ is positive semidefinite for all $\alpha \in [0, \beta_o]$.

\vspace*{3mm}

To prove the converse, it is enough to prove that if $ \alpha \in (\beta_o,1] $, then $ B_\alpha $ is not positive semidefinite. Suppose there exists an $ \alpha \in(\beta_o, 1] $ such that $ \lambda_n(B_\alpha)\geq 0 $. By Theorem \ref{Th3.1}, the function $ f_G(\alpha):=\lambda_n(B_\alpha) $ is continuous and $ \lambda_n(B_1)<0 $, so there exist a $ \beta \in [\alpha, 1) $ such that $ \lambda_n(B_\beta)=0 $. This contradicts that $ \beta_o $ is the largest number in $ (0,1) $ for which $ \lambda_n(B_{\beta_o})=0 $. Hence, $ \lambda_n(B_\alpha)<0 $ for all $ \alpha \in (\beta_o,1]$.
\end{proof}

A symmetric matrix $ M $ is called indefinite if there exist two nonzero vectors $ x $ and $ y $ such that $ y^TMy >0> x^TMx$, where $ x^T $ denotes the transpose of $ x $.

\begin{corollary}
	For a graph $G$ with no isolated vertices, $B_{\alpha}$ is indefinite if and only if $\alpha\in(\beta_o,1]$.
\end{corollary}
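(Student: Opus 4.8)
The plan is to reduce the statement to the preceding characterization of positive semidefiniteness, using the elementary observation that a symmetric matrix is indefinite precisely when its quadratic form takes both a strictly negative and a strictly positive value, equivalently when $\lambda_n(B_\alpha)<0$ and $\lambda_1(B_\alpha)>0$ hold simultaneously. With this reformulation the forward implication is almost immediate, and the reverse implication splits naturally into a ``negative'' half and a ``positive'' half.

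For the forward direction I would argue by contrapositive through the positive semidefinite theorem. If $B_\alpha$ is indefinite, then there is a nonzero $x$ with $x^t B_\alpha x<0$, so the Rayleigh quotient gives $\lambda_n(B_\alpha)<0$ and hence $B_\alpha$ is not positive semidefinite. By the theorem characterizing positive semidefinite $B_\alpha$-matrices (which asserts that $B_\alpha$ is positive semidefinite if and only if $\alpha\in[0,\beta_o]$), we must have $\alpha\notin[0,\beta_o]$, that is $\alpha\in(\beta_o,1]$.

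For the reverse direction assume $\alpha\in(\beta_o,1]$. The negative half comes for free: by the same theorem $B_\alpha$ is not positive semidefinite, so $\lambda_n(B_\alpha)<0$, and any unit eigenvector $x$ for $\lambda_n$ satisfies $x^t B_\alpha x<0$. For the positive half I would exhibit a single explicit test vector. Since $G$ has no isolated vertices it has at least one edge $v_iv_j$, and since $\beta_o\geq\frac{2}{3}$ (established in the earlier corollary) we have $\alpha>\beta_o\geq\frac{2}{3}$, so that $2\alpha-1>\frac{1}{3}>0$. Writing $B_\alpha=(2\alpha-1)A+(1-\alpha)D$ and taking $y=e_i+e_j$ (standard basis vectors), a direct computation gives
\begin{equation*}
y^t B_\alpha y=(2\alpha-1)\,y^tAy+(1-\alpha)\,y^tDy=2(2\alpha-1)+(1-\alpha)(d_i+d_j)>0,
\end{equation*}
because the first term is strictly positive and the second is nonnegative. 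Thus the quadratic form of $B_\alpha$ attains both a negative and a positive value, so $B_\alpha$ is indefinite.

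This construction is the only substantive step, and it is not really an obstacle: the crucial point is that $\beta_o\geq\frac{2}{3}$ forces $\alpha>\frac{1}{2}$, hence the coefficient $2\alpha-1$ of $A$ in the decomposition $B_\alpha=(2\alpha-1)A+(1-\alpha)D$ is positive, which is exactly what makes the edge test vector $e_i+e_j$ produce a positive quadratic form uniformly across all $\alpha\in(\beta_o,1]$. As an alternative for the positive half one could instead invoke $\tr(B_\alpha)=(1-\alpha)\sum_i d_i$, which is positive for $\alpha<1$ and therefore forces $\lambda_1(B_\alpha)>0$, treating the endpoint $\alpha=1$ (where $B_1=A$ has zero trace but is nonzero) separately; I prefer the uniform test-vector argument since it avoids splitting into cases.
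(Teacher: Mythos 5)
Your proof is correct, and its overall skeleton is the same as the paper's: both directions are reduced to the theorem characterizing positive semidefiniteness, so the only substantive work in either argument is showing $\lambda_1(B_\alpha)>0$ for $\alpha\in(\beta_o,1]$. There you diverge. The paper gets this in one line from Weyl's inequality (Corollary~\ref{cor1}), writing
\begin{equation*}
\lambda_1(B_\alpha)=\lambda_1\bigl((2\alpha-1)A+(1-\alpha)D\bigr)\geq(2\alpha-1)\lambda_1(A)+(1-\alpha)\lambda_n(D)>0,
\end{equation*}
which is valid because $\alpha>\beta_o\geq\tfrac{2}{3}$ makes $2\alpha-1>0$ and $\lambda_1(A)>0$ for a graph with an edge. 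Your edge test vector $y=e_i+e_j$, giving $y^tB_\alpha y=2(2\alpha-1)+(1-\alpha)(d_i+d_j)>0$, accomplishes exactly the same thing more elementarily: it needs nothing beyond evaluating the quadratic form, whereas the paper leans on the eigenvalue machinery from its preliminaries. Both arguments hinge on the identical key fact you isolate — $\beta_o\geq\tfrac{2}{3}$ forces the coefficient $2\alpha-1$ of $A$ to be positive throughout $(\beta_o,1]$ — so the two are interchangeable; yours avoids citing Weyl, the paper's avoids choosing a vector. Your trace-based alternative would also work, but as you note it needs the case split at $\alpha=1$, so the uniform test-vector argument is the cleaner of your two options.
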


\begin{proof}
	First we assume that $B_{\alpha}$ is indefinite. Set $ \beta_o:=\beta_o(G) $. Then $\lambda_1(B_{\alpha})>0$ and $\lambda_n(B_{\alpha})<0$. Thus, $\alpha\in(\beta_o,1]$.
	
	Conversely, suppose $\alpha\in(\beta_o,1]$. Then $\lambda_n(B_{\alpha})<0$. Also, $\alpha>\beta_o>\frac{1}{2}$. Hence, by Corollary \ref{cor1}, we have
	\begin{align*}
		\lambda_1(B_{\alpha})&=\lambda_1((2\alpha-1)A+(1-\alpha)D)\\
		&\geq\lambda_1((2\alpha-1)A)+\lambda_n((1-\alpha)D)\\
		&=(2\alpha-1)\lambda_1(A)+(1-\alpha)\lambda_n(D)\\
		&>0.
	\end{align*}
	Hence, 	$B_{\alpha}$ is indefinite.
\end{proof}
In the next theorem, we find $\beta_o$ for regular graphs.
\begin{theorem}
	If $G$ is an $r$-regular graph with smallest adjacency eigenvalue $ \rho_n(G) $. Then $$\beta_o(G)=\frac{r-\rho_n(G)}{r-2\rho_n(G)}.$$
\end{theorem}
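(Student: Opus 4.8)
The plan is to exploit regularity to diagonalize $B_\alpha$ simultaneously with $A$. Since $G$ is $r$-regular, $D=rI$, so the equivalent form $B_\alpha=(2\alpha-1)A+(1-\alpha)D$ recorded before Theorem \ref{Th3.1} becomes $B_\alpha=(2\alpha-1)A+(1-\alpha)rI$. Hence $A$ and $B_\alpha$ share an eigenbasis, and the eigenvalues of $B_\alpha$ are exactly $(2\alpha-1)\rho_i(G)+(1-\alpha)r$ for $i=1,\dots,n$, where $\rho_1=r\geq\rho_2\geq\cdots\geq\rho_n$ are the adjacency eigenvalues (recall $\rho_1=r$ for an $r$-regular graph).

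The first key step is to pin down $\lambda_n(B_\alpha)$ as an explicit function of $\alpha$. Because $\rho_i\mapsto(2\alpha-1)\rho_i+(1-\alpha)r$ is affine, the smallest eigenvalue of $B_\alpha$ is produced by the extreme adjacency eigenvalue selected according to the sign of $2\alpha-1$. For $\alpha\in[0,\tfrac12]$ the coefficient $2\alpha-1\leq 0$ reverses the order, so the minimum is attained at $\rho_1=r$, giving $\lambda_n(B_\alpha)=(2\alpha-1)r+(1-\alpha)r=\alpha r$; for $\alpha\in[\tfrac12,1]$ the coefficient is nonnegative and the minimum is attained at $\rho_n$, giving $\lambda_n(B_\alpha)=(2\alpha-1)\rho_n+(1-\alpha)r$. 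Both expressions agree at $\alpha=\tfrac12$, consistent with the continuity furnished by Theorem \ref{Th3.1}.

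Next I would locate the zeros of $\lambda_n(B_\alpha)$ on $(0,1)$. On $[0,\tfrac12]$ we have $\lambda_n(B_\alpha)=\alpha r$, which is strictly positive for $\alpha\in(0,\tfrac12]$, so no zero occurs there. On $[\tfrac12,1]$, writing $h(\alpha)=(2\alpha-1)\rho_n+(1-\alpha)r=\alpha(2\rho_n-r)+(r-\rho_n)$ and using $\rho_n<0$ (valid since a nontrivial regular graph has an edge), we see $2\rho_n-r<0$, so $h$ is strictly decreasing with $h(\tfrac12)=\tfrac{r}{2}>0$ and $h(1)=\rho_n<0$. Thus $h$ has a unique zero in $(\tfrac12,1)$, and solving $h(\alpha)=0$ gives $\alpha=\tfrac{r-\rho_n}{r-2\rho_n}$. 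Since this is the only root of $\lambda_n(B_\alpha)$ in the open interval $(0,1)$ (the root at $\alpha=0$ lies outside it), it is in particular the largest such root, whence $\beta_o(G)=\tfrac{r-\rho_n(G)}{r-2\rho_n(G)}$.

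The step requiring the most care is the case split at $\alpha=\tfrac12$: one must justify that for $\alpha>\tfrac12$ the smallest eigenvalue of $B_\alpha$ is genuinely governed by $\rho_n$, and not by some other $\rho_i$. This follows because scaling $A$ by the positive factor $2\alpha-1$ preserves the ordering of its eigenvalues and adding the scalar matrix $(1-\alpha)rI$ shifts them all equally. Everything else is an elementary monotonicity-and-root computation; the strictness $\rho_n<0$, which guarantees both that the denominator $r-2\rho_n$ is nonzero and that the root falls inside $(\tfrac12,1)$, is exactly what is secured by the presence of at least one edge.
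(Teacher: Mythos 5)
Your proposal is correct and follows essentially the same route as the paper: regularity gives $D=rI$, so the eigenvalues of $B_\alpha$ are $(2\alpha-1)\rho_i(G)+(1-\alpha)r$, and setting the smallest one to zero yields the formula. The only difference is thoroughness: the paper simply solves $0=\lambda_n(B_{\beta_o})=(2\beta_o-1)\rho_n(G)+(1-\beta_o)r$, implicitly relying on $\beta_o\geq\frac{2}{3}>\frac{1}{2}$ (from the earlier corollary) so that the smallest eigenvalue of $B_{\beta_o}$ is indeed the image of $\rho_n(G)$, whereas you make the case split at $\alpha=\frac{1}{2}$ and the uniqueness of the root in $(0,1)$ explicit.
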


\begin{proof}
Set $ \beta_o:=\beta_o(G) $. Then
\begin{align*}
	0=\lambda_n(B_{\beta_o})&=\lambda_n\Big((2\beta_o-1)A+(1-\beta_o)D\Big)\\
	&=\lambda_n\Big((2\beta_o-1)A+(1-\beta_o)rI\Big)\\
	&=(2\beta_o-1)\lambda_n(A)+(1-\beta_o)r.
\end{align*}
 This gives $\displaystyle{\beta_o=\frac{r-\lambda_n(A)}{r-2\lambda_n(A)}.}$
\end{proof}

Let us recall the following Hoffman bound of independence number $ \alpha(G) $ of an $ r $-regular graph $ G $ with $ n $ vertices in terms of $ \rho_{n}(G) $.
$$ \alpha(G)\leq -\frac{\rho_{n}(G)}{r-\rho_{n}(G)}\,n.$$
In light of the above result, we obtain a close relation between the independence number $ \alpha(G) $ of a regular graph $ G $ and $ \beta_o(G)$.

\begin{prop}
	Let $ G $ be an $ r $-regular graph of $ n $ vertices with independence number $ \alpha(G) $ and $ \beta_o:=\beta_o(G) $. Then $$ \alpha(G)\leq n\left(\frac{1-\beta_o}{\beta_o}\right).$$
\end{prop}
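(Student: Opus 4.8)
The plan is to read the result directly off the Hoffman bound (stated immediately before the proposition) together with the explicit formula for $\beta_o(G)$ from the preceding theorem; no new spectral machinery is required. First I would record that since $G$ is $r$-regular with $r\geq 1$, it has no isolated vertices, so $\beta_o:=\beta_o(G)$ is well-defined and the preceding theorem applies, yielding $\beta_o=\frac{r-\rho_n(G)}{r-2\rho_n(G)}$.

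The heart of the argument is simply to invert this relation so as to match the coefficient $-\frac{\rho_n(G)}{r-\rho_n(G)}$ appearing in the Hoffman bound. Writing $\rho:=\rho_n(G)$ for brevity, I would compute
\[
\frac{1-\beta_o}{\beta_o}=\frac{1}{\beta_o}-1=\frac{r-2\rho}{r-\rho}-1=\frac{-\rho}{r-\rho}.
\]
Multiplying through by $n$ then gives $n\left(\frac{1-\beta_o}{\beta_o}\right)=-\frac{\rho_n(G)}{r-\rho_n(G)}\,n$, which is exactly the right-hand side of the Hoffman bound. Combining this identity with $\alpha(G)\leq -\frac{\rho_n(G)}{r-\rho_n(G)}\,n$ delivers the claimed inequality at once.

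The main difficulty here is bookkeeping rather than anything conceptual: one must confirm that $\beta_o$ is well-defined (ensured by $r\geq 1$, which rules out isolated vertices) and that the denominators $r-\rho$ and $r-2\rho$ are nonzero. Since any graph with at least one edge has $\rho_n(G)<0$, both denominators are strictly positive, so the algebraic manipulation above is valid and no degenerate cases intervene. I therefore expect the whole proof to collapse to the single substitution displayed above.
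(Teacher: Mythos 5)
Your proof is correct and matches the paper's intent exactly: the paper states this proposition without an explicit proof, offering it "in light of" the Hoffman bound and the formula $\beta_o=\frac{r-\rho_n(G)}{r-2\rho_n(G)}$, and your substitution $\frac{1-\beta_o}{\beta_o}=\frac{-\rho_n(G)}{r-\rho_n(G)}$ is precisely the intended argument. Your attention to the sign of $\rho_n(G)$ and the nonvanishing denominators is a welcome bit of extra care.
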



\section{On eigenvalues}\label{sec2}
The spectral radius of a matrix is the maximum value among the absolute values of all eigenvalues of that matrix.  For any $ B_\alpha $-matrix, we first show a partial Perron-Frobenius type result (that is, $ \lambda_1(B_\alpha) $ is the spectral radius of $ B_\alpha $). Then we compute the eigenvalues of $ B_\alpha(G) $ for complete graph and complete bipartite graph. Thereafter, we obtain some lower and upper bounds on the largest eigenvalue of $ B_\alpha (G)$ of graph $G$ in terms of $\Delta$ and $\delta$. As a consequence, we deduce some known results.  In addition, we establish an upper bound on the smallest eigenvalue of $ B_\alpha(G) $ in terms of the chromatic number. Finally, we derive a bound on the chromatic number in terms of $ \beta_o(G) $. 



It is to be observed that $B_{\alpha}$-matrices are not always non-negative. Therefore, Perron-Frobenius Theorem is not directly applicable. However, we can still conclude that the spectral radius of a $B_{\alpha}$-matrix is the same as its largest eigenvalue.
\begin{theorem}\label{th4.1}
		For any $ \alpha\in[0,1] $, the spectral radius of $ B_\alpha $ of a connected graph $ G $ is $ \lambda_1(B_\alpha) $.
\end{theorem}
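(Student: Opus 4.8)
The plan is to prove that $\lambda_1(B_\alpha)$ coincides with the spectral radius $\rho(B_\alpha)=\max_{1\le i\le n}|\lambda_i(B_\alpha)|$. Since $B_\alpha$ is symmetric with real eigenvalues, and since $\rho(B_\alpha)\ge\lambda_1(B_\alpha)$ always holds, it suffices to show that $\rho(B_\alpha)$ is itself an eigenvalue of $B_\alpha$ (forcing $\rho(B_\alpha)\le\lambda_1(B_\alpha)$ and hence equality) or, equivalently, that the most negative eigenvalue satisfies $-\lambda_n(B_\alpha)\le\lambda_1(B_\alpha)$. I would split the interval $[0,1]$ at $\alpha=\frac{1}{2}$ and treat the two regimes by different tools, since the two obstacles — non-nonnegativity on one side and indefiniteness on the other — do not occur simultaneously.

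For $\alpha\in[\frac{1}{2},1]$ I would use the representation $B_\alpha=(2\alpha-1)A+(1-\alpha)D$. Here both coefficients $2\alpha-1$ and $1-\alpha$ are nonnegative, so $B_\alpha$ is entrywise nonnegative; moreover, the irreducibility of $B_\alpha$ for $\alpha\neq\frac{1}{2}$ was already established for connected $G$ via \eqref{ieq1} and Theorem \ref{th2.4}. The Perron--Frobenius theorem then guarantees that $\rho(B_\alpha)$ is an eigenvalue of $B_\alpha$; being a real eigenvalue it is at most $\lambda_1(B_\alpha)$, while $\rho(B_\alpha)\ge\lambda_1(B_\alpha)$ always, so $\rho(B_\alpha)=\lambda_1(B_\alpha)$. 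The single degenerate value $\alpha=\frac{1}{2}$ I would handle directly: $B_{1/2}=\frac{1}{2}D$ is a nonnegative diagonal matrix, whose spectral radius equals $\frac{1}{2}\Delta=\lambda_1(B_{1/2})$.

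For $\alpha\in[0,\frac{1}{2})$ the matrix $B_\alpha$ need not be nonnegative, so instead I would invoke the positive semidefiniteness results of Section \ref{sec1}. By Theorem \ref{Th3.2}, $B_\alpha$ is positive definite for $\alpha\in(0,\frac{2}{3})\supset(0,\frac{1}{2})$, and $B_0=L$ is positive semidefinite; hence $\lambda_n(B_\alpha)\ge0$ for every $\alpha\in[0,\frac{1}{2})$. Consequently all eigenvalues are nonnegative and $\rho(B_\alpha)=\lambda_1(B_\alpha)$ trivially. Together with the previous case this covers the whole interval $[0,1]$.

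The main obstacle is the regime $\alpha\in(\beta_o,1]$, where $B_\alpha$ is genuinely indefinite, so $\lambda_n(B_\alpha)<0$ and the semidefiniteness argument breaks down; the point of the case split is precisely that on this regime $\alpha\ge\frac{1}{2}$, so the entrywise nonnegativity of $B_\alpha$ lets Perron--Frobenius control the spectral radius even when the smallest eigenvalue is negative. The only secondary technical care needed is the degenerate diagonal case $\alpha=\frac{1}{2}$, which sits between the two tools but is settled by inspection, so the two arguments dovetail with no gap.
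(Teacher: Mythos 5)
Your proposal is correct and takes essentially the same route as the paper: positive semidefiniteness (Theorem \ref{Th3.2}) covers the low-$\alpha$ regime, and Perron--Frobenius applied to the nonnegative irreducible matrix $B_\alpha=(2\alpha-1)A+(1-\alpha)D$ covers the high-$\alpha$ regime. The only (immaterial) differences are that you place $\alpha=\tfrac{1}{2}$ in the Perron--Frobenius case and settle it by inspecting the diagonal matrix $\tfrac{1}{2}D$, whereas the paper folds it into the semidefinite case, and you explicitly supply the fact that $B_0=L$ is positive semidefinite, which the paper's citation of Theorem \ref{Th3.2} (stated only for $\alpha\in(0,\tfrac{2}{3})$) technically leaves implicit.
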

\begin{proof}
	For $\alpha\in[0,\frac{1}{2}]$, $B_{\alpha}$ is positive semidefinite by Theorem \ref{Th3.2}. Hence, $\lambda_1(B_{\alpha})$ is the spectral radius of $ B_\alpha $. Let $\alpha\in(\frac{1}{2},1]$. Then $2\alpha-1>0$ and hence $B_{\alpha}=(2\alpha-1)A+(1-\alpha)D$ is a non-negative matrix. Also, 	 $B_{\alpha}$ is irreducible.  Therefore, by Perron-Frobenius Theorem, $\lambda_1(B_{\alpha})$ is the spectral radius of $ B_\alpha $.
\end{proof}

\subsection{The spectrum of $B_\alpha$-matrix for complete and complete bipartite graphs}

In this subsection, we determine the eigenvalues of $ B_\alpha $-matrices of the complete graph and the complete bipartite graph.
\begin{prop}
	If $G$ is a complete graph with $n$ vertices, then eigenvalues of $B_{\alpha}(G)$ are $(1-\alpha)n-\alpha$ with multiplicity $n-1$ and $(n-1)\alpha $ with multiplicity $1$.
\end{prop}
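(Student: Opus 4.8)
The plan is to exploit the highly symmetric structure of $K_n$, which lets me express every relevant matrix in terms of just the identity $I$ and the all-ones matrix $J$. For the complete graph on $n$ vertices, every vertex has degree $n-1$, so $A(G)=J-I$, $D(G)=(n-1)I$, and hence $L(G)=D(G)-A(G)=nI-J$. I would record these three identities first, since they reduce the entire problem to linear algebra over the two-dimensional commutative algebra spanned by $I$ and $J$.

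Next I would substitute into the definition and simplify. Writing
\begin{align*}
B_{\alpha}(G)&=\alpha A(G)+(1-\alpha)L(G)\\
&=\alpha(J-I)+(1-\alpha)(nI-J)\\
&=(2\alpha-1)J+\big((1-\alpha)n-\alpha\big)I,
\end{align*}
so that $B_{\alpha}(G)$ is an explicit affine combination of $J$ and $I$. This is the only computation of substance, and it is entirely routine; there is no genuine obstacle in this proposition.

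Finally, I would read off the spectrum from the known eigenstructure of $J$. The matrix $J$ has eigenvalue $n$ on the one-dimensional space spanned by the all-ones vector $\mathbf{1}$, and eigenvalue $0$ on its $(n-1)$-dimensional orthogonal complement. Since $I$ acts as the scalar $1$ on every vector, the eigenvalues of $(2\alpha-1)J+\big((1-\alpha)n-\alpha\big)I$ are obtained by applying the scalar map $t\mapsto (2\alpha-1)t+(1-\alpha)n-\alpha$ to the eigenvalues of $J$. For $t=n$ this gives $(2\alpha-1)n+(1-\alpha)n-\alpha=\alpha n-\alpha=(n-1)\alpha$, with multiplicity $1$; for $t=0$ it gives $(1-\alpha)n-\alpha$, with multiplicity $n-1$. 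This yields exactly the claimed spectrum, so the proof is complete after verifying the final arithmetic simplification.
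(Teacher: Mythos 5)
Your proof is correct: the identities $A(K_n)=J-I$, $L(K_n)=nI-J$, the simplification $B_\alpha=(2\alpha-1)J+\bigl((1-\alpha)n-\alpha\bigr)I$, and the spectral reading via the eigenvalues $n$ and $0$ of $J$ all check out. The paper states this proposition without proof, and your argument is exactly the standard one that the authors evidently had in mind, so nothing further is needed.
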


A complete bipartite graph with vertex partition size $a$ and $b$ is denoted by $K_{a,b}$. Since the eigenvalues of the adjacency matrix of a complete bipartite graph are known, so we compute the eigenvalues of $B_\alpha(K_{a,b})$ for $\alpha \in [0,1)$.
\begin{prop} \label{1ws1}
	For $ \alpha\in[0,1) $, the eigenvalues of $ B_\alpha(K_{a,b}) $ are $(1-\alpha)a$ with multiplicity $b-1$, $(1-\alpha)b$ with multiplicity $a-1$, and $\displaystyle{\frac{(1-\alpha)(a+b)\pm\sqrt{(1-\alpha)^2(a-b)^2+4(2\alpha-1)^2ab}}{2}}$.
\end{prop}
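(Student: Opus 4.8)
The plan is to write $B_\alpha(K_{a,b})$ in block form and exploit the high symmetry of $K_{a,b}$. Label the vertices so that the part of size $a$ comes first and the part of size $b$ second; then every vertex in the first part has degree $b$ and every vertex in the second part has degree $a$. Using the identity $B_\alpha=(2\alpha-1)A+(1-\alpha)D$ recorded in Section~\ref{sec1}, I would start from
\[
B_\alpha(K_{a,b})=\begin{pmatrix}(1-\alpha)b\,I_a & (2\alpha-1)J_{a,b}\\[2pt](2\alpha-1)J_{b,a} & (1-\alpha)a\,I_b\end{pmatrix}.
\]

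Next I would decompose $\mathbb{R}^{a+b}$ into three $B_\alpha$-invariant subspaces. For any ${\bf u}\in\mathbb{R}^a$ with ${\bf 1}_a^t{\bf u}=0$, the vector $({\bf u},{\bf 0})$ satisfies $J_{b,a}{\bf u}={\bf 0}$, so $B_\alpha({\bf u},{\bf 0})=(1-\alpha)b\,({\bf u},{\bf 0})$; this produces the eigenvalue $(1-\alpha)b$ with multiplicity $a-1$. Symmetrically, vectors $({\bf 0},{\bf w})$ with ${\bf 1}_b^t{\bf w}=0$ give the eigenvalue $(1-\alpha)a$ with multiplicity $b-1$. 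These two mutually orthogonal subspaces have total dimension $(a-1)+(b-1)$, and their orthogonal complement is the two-dimensional space spanned by $({\bf 1}_a,{\bf 0})$ and $({\bf 0},{\bf 1}_b)$; since $B_\alpha$ is symmetric and preserves the first two subspaces, it preserves this complement as well.

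On that last subspace, using $J_{a,b}{\bf 1}_b=b\,{\bf 1}_a$ and $J_{b,a}{\bf 1}_a=a\,{\bf 1}_b$, the action of $B_\alpha$ on $(s\,{\bf 1}_a,t\,{\bf 1}_b)$ is represented in the basis $\{({\bf 1}_a,{\bf 0}),({\bf 0},{\bf 1}_b)\}$ by the $2\times2$ matrix
\[
M=\begin{pmatrix}(1-\alpha)b & (2\alpha-1)b\\[2pt](2\alpha-1)a & (1-\alpha)a\end{pmatrix}.
\]
I would finish by computing the two remaining eigenvalues as those of $M$: since $\operatorname{tr}M=(1-\alpha)(a+b)$ and $\det M=ab\big[(1-\alpha)^2-(2\alpha-1)^2\big]$, the discriminant $(\operatorname{tr}M)^2-4\det M$ simplifies to $(1-\alpha)^2(a-b)^2+4(2\alpha-1)^2ab$, which yields exactly the claimed pair $\tfrac12\big((1-\alpha)(a+b)\pm\sqrt{(1-\alpha)^2(a-b)^2+4(2\alpha-1)^2ab}\,\big)$. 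The multiplicity count $(a-1)+(b-1)+2=a+b$ then confirms that the entire spectrum has been accounted for.

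There is no genuine obstacle here; the only points requiring care are the correct assignment of degrees (degree $b$ on the part of size $a$ and conversely, which is precisely why the multiplicities appear swapped relative to a naive reading) and the algebraic simplification of the discriminant. The restriction $\alpha\in[0,1)$ merely keeps the listed values $(1-\alpha)a$ and $(1-\alpha)b$ from both collapsing to $0$; the computation itself remains valid for every $\alpha\in[0,1]$, and at $\alpha=1$ it correctly recovers the adjacency spectrum $\{\pm\sqrt{ab},0^{(a+b-2)}\}$ of $K_{a,b}$.
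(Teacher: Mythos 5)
Your proposal is correct, and while the first half coincides with the paper's argument, the second half takes a genuinely different route. Like the paper, you obtain the eigenvalues $(1-\alpha)b$ (multiplicity $a-1$) and $(1-\alpha)a$ (multiplicity $b-1$) from vectors supported on a single part whose entries sum to zero; the paper just exhibits an explicit basis of difference vectors rather than describing the subspace. The divergence is in how the last two eigenvalues are found: the paper computes $\det(B_\alpha)$ via the Schur complement formula (Theorem~\ref{th2.3}), pairs it with the trace to get the product and sum of the two unknown eigenvalues, and solves the resulting quadratic; you instead restrict $B_\alpha$ to the two-dimensional invariant subspace spanned by $(\mathbf{1}_a,\mathbf{0})$ and $(\mathbf{0},\mathbf{1}_b)$ and read the eigenvalues off the $2\times 2$ quotient matrix $M$. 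Your route buys two things: it produces the remaining eigenvalues (and, implicitly, their eigenvectors) directly rather than through symmetric functions, and it never requires inverting $(1-\alpha)b\,I_a$, so it is valid uniformly on $[0,1]$ --- the paper's division by $1-\alpha$ in the Schur complement step is exactly why the proposition is stated only for $\alpha\in[0,1)$, a restriction your observation about recovering the adjacency spectrum $\pm\sqrt{ab}$ at $\alpha=1$ shows to be unnecessary. What the paper's route buys is economy of tools already set up in its preliminaries: it reuses the determinant machinery (Theorem~\ref{th2.3}) that reappears in Section~\ref{sec3}.
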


\begin{proof}
	Let $J_{a,b}$ and $J_{b,a}$ be the square matrices of order $a\times b$ and $b\times a$, respectively, of all ones. Then
	\begin{align*}
		B_{\alpha}(K_{a,b})=B_{\alpha}&=(2\alpha-1)A+(1-\alpha)D\\
		&=(2\alpha-1)\left(\begin{array}{cc}
			0 & J_{a,b}\\[3mm]
			J_{b,a} & 0
		\end{array} \right) +(1-\alpha)\left(\begin{array}{cc}
			b I_{a} & 0\\[3mm]
			0 & a I_{b}
		\end{array} \right)\\[3mm]
		&=
		\left(\begin{array}{cc}
			(1-\alpha)bI_a & (2\alpha-1)J_{a,b}\\[3mm]
			(2\alpha-1)J_{b,a} & (1-\alpha)aI_b
		\end{array} \right).
	\end{align*}
	For $i\in\{2,3,\ldots,a\}$, let the vector ${\bf x^{(i)}}=(x^i_1,x^i_2, \dots, x^i_{a+b})^t$ of order $a+b$ be defined as
	\begin{align*}
		x_j^i=\begin{cases}
			1\  &\text{ for } j=1,\\
			-1\ & \text{ for } j=i,\\
			0\  & \text{ otherwise}.
		\end{cases}
	\end{align*}
	Then $\{{\bf x^{(2)}},{\bf x^{(3)}},\ldots,{\bf x^{(a)}}\}$ is a linearly independent set of eigenvectors corresponding to the eigenvalue $(1-\alpha)b$. Thus, $B_{\alpha}$ has eigenvalue $(1-\alpha)b$ with multiplicity $a-1$.
	
	For $i\in\{a+2,a+3,\ldots,a+b\}$, let the vector ${\bf x^{(i)}}=(x^i_1,x^i_2, \dots, x^i_{a+b})^t$ of order $a+b$ be defined as
	\begin{align*}
		x_j^i=\begin{cases}
			1\ &\text{ for } j=a+1,\\
			-1\ &\text{ for } j=i,\\
			0\ &\text{ otherwise}.
		\end{cases}
	\end{align*}
	Then  $\{{\bf x^{(a+2)}},{\bf x^{(a+3)}},\ldots,{\bf x^{(a+b)}}\}$ is a linearly independent set of eigenvectors corresponding to the eigenvalue $(1-\alpha)a$. Thus, $B_{\alpha}$ has eigenvalue $(1-\alpha)a$ with multiplicity $b-1$.
	
	Let $\alpha\in[0,1)$. Then, by using Theorem \ref{th2.3}, we compute
	\begin{align*}
		\det(B_{\alpha})&=\begin{vmatrix}
			(1-\alpha)\,b\,I_a & (2\alpha-1)\,J_{a,b}\\[3mm]
			(2\alpha-1)\,J_{b,a} & (1-\alpha)\,a\,I_b
		\end{vmatrix}\\[3mm]
		&=\det\Big((1-\alpha)\,b\,I_a\Big)\,\det\left((1-\alpha)a\,I_b-(2\alpha-1)\,J_{b,a}\Big((1-\alpha)\,b\,I_a\Big)^{-1}(2\alpha-1)\,J_{a,b}\right)\\[3mm]
		&=(1-\alpha)^a\,b^a\det\left((1-\alpha)aI_b-(2\alpha-1)^2J_{b,a}\frac{1}{(1-\alpha)b}I_a J_{a,b}\right)\\[3mm]
		&=(1-\alpha)^a\,b^a\det\left((1-\alpha)aI_b-\frac{a(2\alpha-1)^2}{(1-\alpha)b}J_{b,b}\right)\\[3mm]
		&=(1-\alpha)^a\,b^a(1-\alpha)^{b-1}\,a^{b-1}\left((1-\alpha)\,a-\frac{a\,(2\alpha-1)^2}{(1-\alpha)}\right).
	\end{align*}
	
	\vspace*{3mm}
	
	Let $x$ and $y$ be the remaining eigenvalues of $B_{\alpha}(K_{a,b})$. Then
	\begin{align*}
		xy\,\Big((1-\alpha)\,a\Big)^{b-1}\Big((1-\alpha)\,b\Big)^{a-1}=(1-\alpha)^a\,b^a\,(1-\alpha)^{b-1}\,a^{b-1}\left((1-\alpha)\,a-\frac{a\,(2\alpha-1)^2}{(1-\alpha)}\right).
	\end{align*}
	Thus we obtain
	\begin{align}\label{eq1}
		xy&=b(1-\alpha)\left((1-\alpha)a-\frac{a(2\alpha-1)^2}{(1-\alpha)}\right)\nonumber\\[1mm]
		&=(1-\alpha)^2ab-(2\alpha-1)^2ab.
	\end{align}
	Since the sum of the eigenvalues is equal to the trace of the matrix, we obtain
	$$x+y+(b-1)(1-\alpha)a+(a-1)(1-\alpha)b=(1-\alpha)ab+(1-\alpha)ab,$$
	that is, $x=(1-\alpha)(a+b)-y$. Substitute the value of $x$ in \eqref{eq1}, we obtain
	$$y^2-(1-\alpha)(a+b)y+(1-\alpha)^2ab-(2\alpha-1)^2ab=0.$$
	This gives
	$$y=\frac{(1-\alpha)(a+b)\pm\sqrt{(1-\alpha)^2(a-b)^2+4(2\alpha-1)^2ab}}{2}.$$
	Hence the eigenvalues of $B_{\alpha}(K_{a,b})$ are  $(1-\alpha)a$ with multiplicity $b-1$, $(1-\alpha)b$ with multiplicity $a-1$ and $\displaystyle{\frac{(1-\alpha)(a+b)\pm\sqrt{(1-\alpha)^2(a-b)^2+4(2\alpha-1)^2ab}}{2}.}$
\end{proof}

\subsection{Bounds on the largest eigenvalue}
For a connected graph $ G $ with vertex set $ V(G)=\{ v_1, v_2, \dots, v_n\} $, the distance between two vertices $ v_i $ and $ v_j $ is denoted by $ d(v_i, v_j) $ and is defined to be the length of the shortest path between them. We now establish some lower and upper bounds on the largest eigenvalue of $ B_\alpha(G) $ of graph $G$.
\begin{theorem}
Let $G$ be a connected graph with the minimum degree $\delta$. Then, for any $ \alpha\in [0,1] $ $$ \lambda_1(B_\alpha) \geq \alpha \delta.$$
\end{theorem}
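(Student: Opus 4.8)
The plan is to read the bound straight off the variational (Rayleigh quotient) characterization of the largest eigenvalue of a symmetric matrix, namely $\lambda_1(B_\alpha)=\max_{\mathbf{x}\neq\mathbf{0}}\frac{\mathbf{x}^t B_\alpha\mathbf{x}}{\mathbf{x}^t\mathbf{x}}$, since $B_\alpha$ is symmetric by Proposition 3.1. The whole proof then reduces to choosing a single well-adapted test vector and evaluating the quadratic form on it. The natural candidate here is the all-ones vector $\mathbf{1}=(1,1,\dots,1)^t$, because it interacts cleanly with both summands of $B_\alpha=\alpha A+(1-\alpha)L$.

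First I would evaluate $\mathbf{1}^t B_\alpha\mathbf{1}$. Using $L\mathbf{1}=\mathbf{0}$ (the all-ones vector lies in the kernel of the Laplacian) together with $\mathbf{1}^t A\mathbf{1}=\sum_{i,j}A_{ij}=\sum_{i=1}^n d_i$, I obtain $\mathbf{1}^t B_\alpha\mathbf{1}=\alpha\sum_{i=1}^n d_i$. The same value follows at once from the quadratic-form identity in \eqref{eq2}, since every edge term $(x_i-x_j)^2$ vanishes when $\mathbf{x}=\mathbf{1}$. Dividing by $\mathbf{1}^t\mathbf{1}=n$ and invoking the Rayleigh bound then gives $\lambda_1(B_\alpha)\geq\frac{\alpha}{n}\sum_{i=1}^n d_i=\alpha\bar d$, where $\bar d$ denotes the average degree of $G$. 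To finish, I would use that the average degree satisfies $\bar d\geq\delta$ and that $\alpha\geq 0$, so $\alpha\bar d\geq\alpha\delta$, which is the claimed inequality.

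There is essentially no technical obstacle; the argument is a single test-vector computation, and the hard part is merely recognizing that $\mathbf{1}$ (rather than a standard basis vector, which would instead yield $(1-\alpha)\delta$) is the choice that produces a factor of $\alpha$. The two points that deserve care are the direction of the final inequality, which is precisely where the hypothesis $\alpha\geq 0$ is used, and the observation that the computation actually establishes the stronger statement $\lambda_1(B_\alpha)\geq\alpha\bar d$, the stated bound $\alpha\delta$ being a convenient degree-based weakening. Connectivity of $G$ is not needed for the inequality itself, though together with the absence of isolated vertices it ensures $\delta>0$, so that the bound is nontrivial.
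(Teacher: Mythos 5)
Your proof is correct, but it takes a genuinely different route from the paper. The paper argues entrywise on an eigenvector of $\lambda_1(B_\alpha)$, splitting into two cases: for $\alpha\in[0,\tfrac12]$ it normalizes the eigenvector so that its minimum entry $x_k$ is negative and compares the $k$-th eigenvalue equation against $\alpha d_k x_k$, while for $\alpha\in(\tfrac12,1]$ it observes that $B_\alpha=(2\alpha-1)A+(1-\alpha)D$ is nonnegative and irreducible and applies Perron--Frobenius to get a positive eigenvector, again reading the bound off the entry where the eigenvector is extremal; this is where connectivity is used. Your argument instead evaluates the Rayleigh quotient at the single test vector $\mathbf{1}$, using $L\mathbf{1}=\mathbf{0}$ and $\mathbf{1}^tA\mathbf{1}=\sum_i d_i$ to get
\begin{equation*}
\lambda_1(B_\alpha)\;\geq\;\frac{\mathbf{1}^tB_\alpha\mathbf{1}}{\mathbf{1}^t\mathbf{1}}\;=\;\frac{2\alpha m}{n}\;=\;\alpha\,\bar d\;\geq\;\alpha\,\delta,
\end{equation*}
which is shorter, needs no case split, dispenses with connectivity entirely, and in fact proves the stronger average-degree bound $\lambda_1(B_\alpha)\geq 2\alpha m/n$ (of which the stated bound is a weakening, since $\bar d\geq\delta$ and $\alpha\geq 0$). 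What the paper's approach buys in exchange is local information: it shows $\lambda_1(B_\alpha)\geq \alpha d_k$ at the vertex where the eigenvector is extremal, and it sets up the Perron--Frobenius/irreducibility machinery (Theorem 4.1 and the equality analysis in the bipartite bound) that the authors reuse in later arguments, where Rayleigh bounds alone would not identify extremal graphs.
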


\begin{proof} First we assume that $\alpha\in[0,\frac{1}{2}]$, that is, $2\alpha-1\leq0$. Let $ {\bf x}=(x_1,x_2, \dots, x_n)^t \in \mathbb{R}^n$ be an eigenvector of $ B_\alpha$ corresponding to $ \lambda_{1}(B_\alpha) $ such that $x_k=\min\limits_{1\leq i\leq n}\,x_i<0$. Then
\begin{align*}
\lambda_{1}(B_\alpha)\,x_k&=(2\alpha-1)\sum\limits_{v_j:v_kv_j \in E(G)} x_j+(1-\alpha)d_kx_k\leq (2\alpha-1)\, d_kx_k+(1-\alpha)d_k\, x_k=\alpha d_kx_k.
\end{align*}
Therefore, $\lambda_{1}(B_\alpha)\geq \alpha d_k\geq \alpha \delta$.

Next we assume that $\alpha\in (\frac{1}{2},1]$, that is, $2\alpha-1>0$. Then $ B_\alpha $ is irreducible and non-negative. Therefore, by Perron-Frobenius Theorem, $B_{\alpha}$ has a Perron eigenvector ${\bf x}=(x_1,x_2, \dots, x_n)^t>0$ corresponding to the eigenvalue $\lambda_{1}(B_\alpha)$. Let $x_k=\min\limits_{1\leq i\leq n}\,x_i>0$. Then we have
\begin{align*}
\lambda_{1}(B_\alpha) x_k&=(2\alpha-1)\sum_{v_j:v_jv_k\in E(G)}x_j+(1-\alpha)d_kx_k\geq (2\alpha-1)d_kx_k+(1-\alpha)d_kx_k=\alpha d_kx_k.
\end{align*}
Thus, $\lambda_{1}(B_\alpha)\geq \alpha d_k \geq\alpha \delta$, for $\alpha\in (\frac{1}{2},1]$. Hence, $\lambda_1(B_\alpha)\geq \alpha\delta$ for all $\alpha\in[0,1]$.
\end{proof}

\vspace*{3mm}
Let $ N_G(v_1) $  denote the set of vertices of $ G $ which are adjacent to $ v_1 $. Let $N_G[v_1]:=N_G(v_1) \cup \{v_1\}$.

\vspace*{3mm}
\begin{theorem}\label{1d1} Let $G$ be a graph with at least one edge and maximum degree $\Delta$. Then, for any $\alpha\,(\neq \frac{1}{2})\in [0,1]$,
	$$\lambda_1(B_\alpha) \geq \frac{Y}{Z},$$
where $Y$ and $Z$ are given by
\begin{align}
Y=&\Big[\alpha^2\,(3\alpha-1)^2\,(\Delta+1)^2\,\Big(2\alpha\,m_2+(1-\alpha)\,m_3\Big)+(1-\alpha)\,(2\alpha-1)^2\,(2\Delta+5\alpha-3\alpha^2)^2\,\Delta\Big]\,P^2\nonumber\\[1mm]
&+4\,(2\alpha-1)^2\,(\Delta+1)\Big[(2\alpha-1)\,\Delta\,(2\Delta+5\alpha-3\alpha^2)+\alpha\,(3\alpha-1)\,(\Delta+1)\,m_3\Big]\,PQ\nonumber\\[3mm]
&+4\,(2\alpha-1)^2\,(\Delta+1)^2\,\Big[2\alpha\,m_1+(1-\alpha)\,(\Delta+m_3)\Big]\,Q^2,\label{FG1}
\end{align}
\begin{align}
Z=\Big[(2\alpha-1)^2\,(2\Delta+5\alpha-3\alpha^2)^2+\alpha^2\,(3\alpha-1)^2\,(\Delta+1)^2\,(n-\Delta-1)\Big]\,P^2+4\Delta\,(\Delta+1)^2\,(2\alpha-1)^2\,Q^2\label{FG2}
\end{align}
with
\begin{align}
P=(2\alpha-1)\,\Big((\alpha-1)\,(3\alpha-2)\,\Delta+2\Big)~\mbox{ and }~Q=16\alpha^2-6\alpha^3-10\alpha+2,\label{FG3}
\end{align}
and $m_1=|E(N_G(v_1))|$, is the number of edges in the set $N_G(v_1)$, $m_2=|E(V(G)\backslash N_G[v_1])|$, is the number of edges in the set $V(G)\backslash N_G[v_1]$, $m_3$ is the number of edges between $N_G(v_1)$ and $V(G)\backslash N_G[v_1]$, and the vertex $ v_1 $ has degree $ \Delta $.
\end{theorem}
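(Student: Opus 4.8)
The plan is to prove the inequality through the variational (Rayleigh) characterization of the largest eigenvalue,
$$\lambda_1(B_\alpha)=\max_{{\bf x}\neq 0}\frac{{\bf x}^tB_\alpha{\bf x}}{{\bf x}^t{\bf x}},$$
so that $\lambda_1(B_\alpha)\geq {\bf x}^tB_\alpha{\bf x}/({\bf x}^t{\bf x})$ for any single nonzero real test vector ${\bf x}$. The appearance of $N_G(v_1)$, $V(G)\setminus N_G[v_1]$ and the edge counts $m_1,m_2,m_3$ in the statement signals that ${\bf x}$ should be constant on each block of the partition $V(G)=\{v_1\}\cup N_G(v_1)\cup(V(G)\setminus N_G[v_1])$, where $v_1$ is the prescribed maximum-degree vertex. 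Throughout I would use the form $B_\alpha=(2\alpha-1)A+(1-\alpha)D$ recorded in Section~\ref{sec1}.

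Concretely, let ${\bf x}$ take the value $p$ on $v_1$, the value $q$ on every vertex of $N_G(v_1)$, and the value $r$ on every vertex of $V(G)\setminus N_G[v_1]$, so that ${\bf x}^t{\bf x}=p^2+\Delta q^2+(n-\Delta-1)r^2$. I would pin down the three constants by demanding that this denominator coincide with $Z$ in \eqref{FG2}; reading off the coefficients of $P^2$, of $Q^2$, and of the factor $(n-\Delta-1)$ suggests (up to a common scalar, and as the numerator computation will confirm) the choice
\begin{align*}
p &= (2\alpha-1)(2\Delta+5\alpha-3\alpha^2)\,P,\\
q &= 2(\Delta+1)(2\alpha-1)\,Q,\\
r &= \alpha(3\alpha-1)(\Delta+1)\,P,
\end{align*}
with $P,Q$ as in \eqref{FG3}. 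Every component carries a factor $(2\alpha-1)$ (directly, or through $P$), so ${\bf x}$ degenerates to $0$ at $\alpha=\frac{1}{2}$; this is precisely where the hypothesis $\alpha\neq\frac{1}{2}$ enters, and one should record that it keeps ${\bf x}\neq 0$ and $Z>0$, legitimizing the Rayleigh bound.

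It then remains to check that this choice makes the numerator equal to $Y$. I would expand ${\bf x}^tB_\alpha{\bf x}=(2\alpha-1)\,{\bf x}^tA{\bf x}+(1-\alpha)\,{\bf x}^tD{\bf x}$ and evaluate each piece block-by-block. The adjacency form ${\bf x}^tA{\bf x}=2\sum_{v_iv_j\in E(G)}x_ix_j$ splits over the edge types relative to the partition: the $\Delta$ edges joining $v_1$ to $N_G(v_1)$ each contribute $pq$, the $m_1$ edges inside $N_G(v_1)$ contribute $q^2$, the $m_3$ edges between $N_G(v_1)$ and $V(G)\setminus N_G[v_1]$ contribute $qr$, the $m_2$ edges inside $V(G)\setminus N_G[v_1]$ contribute $r^2$, while there are no edges from $v_1$ to $V(G)\setminus N_G[v_1]$. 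The degree form ${\bf x}^tD{\bf x}=\sum_i d_ix_i^2$ uses the block degree sums $\sum_{v\in N_G(v_1)}d(v)=\Delta+2m_1+m_3$ and $\sum_{v\in V(G)\setminus N_G[v_1]}d(v)=m_3+2m_2$. Substituting $p,q,r$ and regrouping as a quadratic in $P$ and $Q$ should reproduce the three-term expression \eqref{FG1}, which completes the argument.

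The conceptual skeleton — a Rayleigh quotient with a block-constant test vector adapted to a maximum-degree vertex — is routine; the genuine obstacle is twofold. First, one must locate the correct constants $p,q,r$: the polynomials $P$ and $Q$ in \eqref{FG3} are engineered so that all cross terms organize cleanly into $P^2,PQ,Q^2$, and without this foresight the estimate would not close into a ratio. (Philosophically, the sharpest bound available from this partition is the largest eigenvalue of the associated $3\times3$ quotient matrix, a root of a cubic; the rational quantity $Y/Z$ is the explicit, closed-form lower bound produced by one structured, generally non-optimal choice of ${\bf x}$.) Second, verifying that the numerator collapses exactly to \eqref{FG1} is a lengthy but purely mechanical computation, where the only care required is the bookkeeping of the five edge types and the two degree sums above.
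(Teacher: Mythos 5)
Your proposal is correct and is essentially the paper's own proof: the paper bounds $\lambda_1(B_\alpha)$ below by the Rayleigh quotient of exactly this block-constant test vector --- your $(p,q,r)$ is the paper's vector $\left(\tfrac{2\Delta+5\alpha-3\alpha^2}{2(\Delta+1)},\,\tfrac{Q}{P},\,\tfrac{\alpha(3\alpha-1)}{2(2\alpha-1)}\right)$ rescaled by the common factor $2(\Delta+1)(2\alpha-1)P$ --- and with that normalization the numerator and denominator come out to be exactly $Y$ and $Z$, as you predict. The only substantive difference is that the paper must split off the degenerate case $P=0$ (which can occur with $\alpha\neq\tfrac12$, e.g.\ $\Delta=24$, $\alpha=\tfrac56$) and use the indicator vector of $N_G(v_1)$ there, whereas your cleared-denominator vector handles it automatically since it degenerates to $(0,q,\ldots,q,0,\ldots,0)$; but then the nonvanishing of ${\bf x}$ rests on $Q\neq 0$ throughout $[0,1]$ (true, since $Q>0$ there), a fact your remark that ``every component carries a factor $(2\alpha-1)$'' does not by itself establish and which you should verify explicitly.
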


\begin{proof} Let $v_1$ be the maximum degree vertex of degree $\Delta$ in $G$. Also let $S=\{v_2,v_3,\ldots,v_{\Delta+1}\}$ be the set of vertices adjacent to $v_1$ in $G$. Let ${\bf x}=(x_1,x_2,\ldots,x_n)^t$ be any non-zero vector. Using Rayleigh quotient, we obtain
\begin{align}
{\bf x}^t B_\alpha {\bf x}\leq \lambda_{1}(B_\alpha){\bf x}^t {\bf x},~\mbox{ that is, }~\lambda_{1}(B_\alpha)\geq \frac{2\,\alpha\,\sum\limits_{v_iv_j\in E(G)}\,x_ix_j+(1-\alpha)\,\sum\limits_{v_iv_j\in E(G)}\,(x_i-x_j)^2}{\sum\limits^n_{i=1}\,x^2_i}.\label{FG0}
\end{align}
We consider the following two cases:

\vspace*{2mm}

\noindent
${\bf Case\,1.}$ $(\alpha-1)\,(3\alpha-2)\,\Delta+2\neq 0$. In this case $P=(2\alpha-1)\,\Big((\alpha-1)\,(3\alpha-2)\,\Delta+2\Big)\neq 0$ as $\alpha\neq \frac{1}{2}$. Setting
\begin{equation}
x_i=\left\{
                                     \begin{array}{ll}
                                       \displaystyle{1-\frac{2-5\alpha+3\alpha^2}{2(\Delta+1)}} & \hbox{for $i=1$,} \\[5mm]
                                       \displaystyle{\frac{16\alpha^2-6\alpha^3-10\alpha+2}{(2\alpha-1)\,\Big((\alpha-1)\,(3\alpha-2)\,\Delta+2\Big)}} & \hbox{for $i=2,3,\ldots,\Delta+1$,}\\[7mm]
                                       \displaystyle{\frac{\alpha(3\alpha-1)}{2(2\alpha-1)}} & \hbox{Otherwise.}
                                     \end{array}
                                   \right.\label{1w2}
\end{equation}
Since $m_1=|E(N_G(v_1))|$, $m_2=|E(V(G)\backslash N_G[v_1])|$, and $m_3$ is the number of edges between $N_G(v_1)$ and $V(G)\backslash N_G[v_1]$, we obtain
\begin{align*}
\sum\limits_{v_iv_j\in E(G)}\,x_ix_j&=\frac{(2\Delta+5\alpha-3\alpha^2)\,(16\alpha^2-6\alpha^3-10\alpha+2)}{2(\Delta+1)\,(2\alpha-1)\,\Big((\alpha-1)\,(3\alpha-2)\,\Delta+2\Big)}\,\Delta\\[3mm]
&~~~~+\frac{(16\alpha^2-6\alpha^3-10\alpha+2)^2}{(2\alpha-1)^2\,\Big((\alpha-1)\,(3\alpha-2)\,\Delta+2\Big)^2}\,m_1+\frac{\alpha^2\,(3\alpha-1)^2}{4\,(2\alpha-1)^2}\,m_2\\[3mm]
&~~~~+\frac{\alpha(3\alpha-1)\,(16\alpha^2-6\alpha^3-10\alpha+2)}{2(2\alpha-1)^2\,\Big((\alpha-1)\,(3\alpha-2)\,\Delta+2\Big)}\,m_3,
\end{align*}

\begin{align*}
\sum\limits_{v_iv_j\in E(G)}\,(x_i-x_j)^2&=\left(\frac{(2\Delta+5\alpha-3\alpha^2)}{2(\Delta+1)}-\frac{16\alpha^2-6\alpha^3-10\alpha+2}{(2\alpha-1)\,\Big((\alpha-1)\,(3\alpha-2)\,\Delta+2\Big)}\right)^2\,\Delta\\[3mm]
&~~~~~+\left(\frac{16\alpha^2-6\alpha^3-10\alpha+2}{(2\alpha-1)\,\Big((\alpha-1)\,(3\alpha-2)\,\Delta+2\Big)}-\frac{\alpha(3\alpha-1)}{2(2\alpha-1)}\right)^2\,m_3,
\end{align*}
and
\begin{align*}
\sum\limits^n_{i=1}\,x^2_i&=\frac{(2\Delta+5\alpha-3\alpha^2)^2}{4(\Delta+1)^2}+\frac{(16\alpha^2-6\alpha^3-10\alpha+2)^2}{(2\alpha-1)^2\,\Big((\alpha-1)\,(3\alpha-2)\,\Delta+2\Big)^2}\,\Delta+\frac{\alpha^2(3\alpha-1)^2}{4(2\alpha-1)^2}\,(n-\Delta-1).
\end{align*}

\vspace*{3mm}

\noindent
Using the above results in (\ref{FG0}), we obtain
$$\lambda_1(B_\alpha) \geq \frac{Y}{Z}$$
as
\begin{align*}
2\,\alpha\,\sum\limits_{v_iv_j\in E(G)}\,x_ix_j+(1-\alpha)\,\sum\limits_{v_iv_j\in E(G)}\,(x_i-x_j)^2=\frac{Y}{4\,(2\alpha-1)^2\,(\Delta+1)^2\,P^2}
\end{align*}
and
\begin{align*}
\sum\limits^n_{i=1}\,x^2_i=\frac{Z}{4\,(2\alpha-1)^2\,(\Delta+1)^2\,P^2},
\end{align*}
where $Y$, $Z$ and $P$ are given by (\ref{FG1}), (\ref{FG2}) and (\ref{FG3}), respectively. Moreover, the equality holds if and only if ${\bf x}=(x_1,x_2,\ldots,x_n)^t$ is an eigenvector corresponding to the eigenvalue $\lambda_1(B_\alpha)$ of $B_\alpha$, where $x_i$ is given in (\ref{1w2}).

\vspace*{3mm}

\noindent
${\bf Case\,2.}$ $(\alpha-1)\,(3\alpha-2)\,\Delta+2=0$. In this case $P=(2\alpha-1)\,\Big((\alpha-1)\,(3\alpha-2)\,\Delta+2\Big)=0$. In this case
 $$Y=4\,(2\alpha-1)^2\,(\Delta+1)^2\,\Big[2\alpha\,m_1+(1-\alpha)\,(\Delta+m_3)\Big]\,Q^2~~\mbox{ and }~~Z=4\Delta\,(\Delta+1)^2\,(2\alpha-1)^2\,Q^2.$$
Setting
\begin{equation*}
x_i=\left\{
                                     \begin{array}{ll}
                                       0 & \hbox{for $i=1$,} \\[2mm]
                                       1 & \hbox{for $i=2,3,\ldots,\Delta+1$,}\\[2mm]
                                       0 & \hbox{Otherwise.}
                                     \end{array}
                                   \right.
\end{equation*}

\vspace*{2mm}

\noindent
Since $m_1=|E(N_G(v_1))|$, $m_2=|E(V(G)\backslash N_G[v_1])|$, and $m_3$ is the number of edges between $N_G(v_1)$ and $V(G)\backslash N_G[v_1]$, we obtain
\begin{align*}
\sum\limits_{v_iv_j\in E(G)}\,x_ix_j=m_1,~~\sum\limits_{v_iv_j\in E(G)}\,(x_i-x_j)^2=\Delta+m_3~\mbox{ and }~\sum\limits^n_{i=1}\,x^2_i=\Delta.
\end{align*}

\noindent
Using the above results in (\ref{FG0}), we obtain
$$\lambda_1(B_\alpha) \geq \frac{2\,\alpha\,m_1+(1-\alpha)\,(\Delta+m_3)}{\Delta}=\frac{Y}{Z}.$$
Moreover, the equality holds if and only if ${\bf x}=(0,\underbrace{1,\ldots,\,1}_{\Delta},\underbrace{0,\ldots,\,0}_{n-\Delta-1})^t$ is an eigenvector corresponding to the eigenvalue $\lambda_1(B_\alpha)$ of $B_\alpha$.
\end{proof}

\begin{corollary}\label{Cor4.1}
	 Let $G$ be a graph of order $n$ with $m$ edges and maximum degree $\Delta$. Then
     $$\rho_1(G)\geq \frac{2m}{n}$$
with equality if and only if $G$ is a regular graph.
\end{corollary}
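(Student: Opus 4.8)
The plan is to obtain this as the $\alpha=1$ specialization of Theorem \ref{1d1}. Since $A=B_1$, the adjacency spectral radius is $\rho_1(G)=\lambda_1(A)=\lambda_1(B_1)$, and $\alpha=1\neq\frac12$, so Theorem \ref{1d1} applies and yields $\rho_1(G)\geq Y/Z$ with $Y,Z,P,Q$ evaluated at $\alpha=1$ (the degenerate empty-graph case $m=0$ is trivial, as both sides vanish). The whole task is then to show that this bound collapses to $2m/n$.

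First I would substitute $\alpha=1$ into \eqref{FG3}, obtaining $P=(2\alpha-1)\big((\alpha-1)(3\alpha-2)\Delta+2\big)=2$ and $Q=16\alpha^2-6\alpha^3-10\alpha+2=2$; note also $2\Delta+5\alpha-3\alpha^2=2(\Delta+1)$, $\alpha(3\alpha-1)=2$, and that every factor $(1-\alpha)$ vanishes. Feeding these into \eqref{FG1} kills the second summand of the first bracket and the $(1-\alpha)$ term in the last bracket, so the three lines of $Y$ reduce to $32(\Delta+1)^2m_2$, $32(\Delta+1)^2(\Delta+m_3)$, and $32(\Delta+1)^2m_1$; hence $Y=32(\Delta+1)^2(m_1+m_2+m_3+\Delta)$. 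Likewise \eqref{FG2} gives $Z=16(\Delta+1)^2\big((n-\Delta)+\Delta\big)=16(\Delta+1)^2n$. The combinatorial fact needed here is the edge partition $m=\Delta+m_1+m_2+m_3$: every edge is either incident to $v_1$ (there are $\deg(v_1)=\Delta$ such edges), or inside $N_G(v_1)$ ($m_1$ edges), or inside $V(G)\setminus N_G[v_1]$ ($m_2$ edges), or between these two sets ($m_3$ edges). With this identity $Y=32(\Delta+1)^2m$, so $Y/Z=2m/n$, which proves the inequality.

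For the equality case I would return to the extremal vector \eqref{1w2} that characterizes equality in Theorem \ref{1d1}. Evaluating \eqref{1w2} at $\alpha=1$ gives $x_1=1-\frac{2-5+3}{2(\Delta+1)}=1$, then $x_i=Q/P=1$ on the neighbors of $v_1$, and $x_i=\frac{\alpha(3\alpha-1)}{2(2\alpha-1)}=1$ on the remaining vertices, so the extremal vector is exactly the all-ones vector $\mathbf{1}$. Hence equality holds iff $A\mathbf{1}=\rho_1(G)\mathbf{1}$; since $(A\mathbf{1})_i=d_i$, this means $d_i=\rho_1(G)$ for every $i$, i.e.\ $G$ is regular, and conversely an $r$-regular graph has $\rho_1(G)=r=2m/n$. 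The argument is essentially verification: the only points requiring care are the $\alpha=1$ arithmetic (which must produce the common factor $(\Delta+1)^2$ in both $Y$ and $Z$ so that it cancels) and the edge-partition identity, with all the conceptual work residing in Theorem \ref{1d1}.
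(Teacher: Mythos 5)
Your proposal is correct and follows exactly the paper's route: specialize Theorem \ref{1d1} at $\alpha=1$, verify $P=Q=2$, $Y=32(\Delta+1)^2(\Delta+m_1+m_2+m_3)=32(\Delta+1)^2m$, $Z=16(\Delta+1)^2n$, and then read off the equality case from the extremal vector of \eqref{1w2}, which degenerates to the all-ones vector, so equality holds iff $\mathbf{1}$ is a $\rho_1$-eigenvector of $A$, i.e.\ iff $G$ is regular. Your write-up just makes explicit the edge-partition identity $m=\Delta+m_1+m_2+m_3$ and the term-by-term arithmetic that the paper's proof states in compressed form.
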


\begin{proof} For adjacency matrix, $\alpha=1$, that is, $B_1=B_1(G)=A(G)$. For $\alpha=1$, from Theorem \ref{1d1}, we obtain
$$P=2=Q,~Y=32\,(\Delta+1)^2\,(\Delta+m_1+m_2+m_3)=32\,(\Delta+1)^2\,m~\mbox{ and }~Z=16\,(\Delta+1)^2\,n$$
and hence
$$\rho_1(G)=\lambda_1(B_1)\geq \frac{Y}{Z}=\frac{2m}{n}.$$
Moreover, the equality holds if and only if ${\bf x}=\left(1,1,\ldots,1\right)^t$ is an eigenvector corresponding to the eigenvalue $\lambda_1(B_1)\,(=\rho_1(G))$ of $B_1$, that is, if and only if $G$ is a regular graph.
\end{proof}

\begin{corollary} Let $G$ be a graph of order $n$ with $m$ edges and maximum degree $\Delta$. Then
     $$\mu_1(G)\geq \Delta+1.$$
If $G$ is connected, then the above equality holds if and only if $\Delta=n-1$.
\end{corollary}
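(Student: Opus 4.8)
The plan is to specialize Theorem \ref{1d1} to $\alpha=0$, exactly as Corollary \ref{Cor4.1} specializes it to $\alpha=1$. Since $L(G)=B_0(G)$ we have $\mu_1(G)=\lambda_1(B_0)$, and as $0\neq\tfrac12$ the theorem applies provided $G$ has at least one edge (so that $\Delta\geq 1$, which is also what the bound $\mu_1(G)\geq\Delta+1$ tacitly requires). First I would substitute $\alpha=0$ into the auxiliary quantities in \eqref{FG3}: a direct check gives $P=-2(\Delta+1)$ and $Q=2$, whence $P^2=4(\Delta+1)^2$, $PQ=-4(\Delta+1)$ and $Q^2=4$. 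Feeding these into \eqref{FG1}--\eqref{FG2} and simplifying, every term carrying the factor $\alpha^2$ drops out, leaving $Y=16(\Delta+1)^2\big[\Delta(\Delta+1)^2+m_3\big]$ and $Z=16\Delta(\Delta+1)^3$.

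Consequently $\dfrac{Y}{Z}=(\Delta+1)+\dfrac{m_3}{\Delta(\Delta+1)}$, and since $m_3\geq 0$ Theorem \ref{1d1} gives $\mu_1(G)\geq Y/Z\geq\Delta+1$, which is the claimed bound. This computation is the routine part of the argument; the only place where care is needed is the collection of terms in $Y$ and $Z$, since a single sign slip in $P$ or $Q$ would corrupt the final ratio. The appearance of the surplus term $m_3/(\Delta(\Delta+1))$ is precisely what powers the equality discussion below, so it is worth recording it explicitly rather than discarding it.

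The main obstacle is the equality characterization for connected $G$, and in particular its converse. For the forward direction I would read off from $Y/Z=(\Delta+1)+m_3/(\Delta(\Delta+1))$ that $\mu_1(G)=\Delta+1$ forces $m_3=0$, for otherwise $\mu_1(G)\geq Y/Z>\Delta+1$ strictly. Now $m_3=0$ means no edge joins $N_G(v_1)$ to $V(G)\setminus N_G[v_1]$, where $v_1$ is a vertex of maximum degree; since $v_1$ has, by definition, no neighbour in $V(G)\setminus N_G[v_1]$ either, the set $V(G)\setminus N_G[v_1]$ is completely separated from $N_G[v_1]$. Connectivity then forces $V(G)\setminus N_G[v_1]=\varnothing$, i.e.\ $\Delta=n-1$. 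For the converse, $\Delta=n-1$ gives $\Delta+1=n$, and the cleanest route is to combine our lower bound $\mu_1(G)\geq\Delta+1=n$ with the upper bound $\mu_1(G)\leq n$ valid for every graph on $n$ vertices (equivalently: a vertex of degree $n-1$ is isolated in $\overline{G}$, so $\overline{G}$ is disconnected, whence $0$ is a Laplacian eigenvalue of $\overline{G}$ of multiplicity at least two, and the complement eigenvalue relation $\mu_1(G)+\mu_{n-1}(\overline{G})=n$ forces $\mu_1(G)=n$). I expect this to be the delicate point, because $\mu_1(G)=\Delta+1$ is a statement about the numerical value attained, not about the eigenvector equality condition of Theorem \ref{1d1}; trying to force equality through that eigenvector is awkward, whereas the sandwich $\Delta+1=n\leq\mu_1(G)\leq n$ closes the converse immediately.
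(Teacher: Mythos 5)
Your proof is correct and follows essentially the same route as the paper: both specialize Theorem \ref{1d1} at $\alpha=0$, and your values $P=-2(\Delta+1)$, $Q=2$, $Y=16(\Delta+1)^2\bigl[\Delta(\Delta+1)^2+m_3\bigr]$, $Z=16\Delta(\Delta+1)^3$ agree with the paper's (the paper writes $Y=16\Delta(\Delta+1)^2\bigl((\Delta+1)^2+\frac{m_3}{\Delta}\bigr)$, which is the same quantity), giving $\mu_1(G)\geq Y/Z=\Delta+1+\frac{m_3}{\Delta(\Delta+1)}\geq\Delta+1$. The only divergence is in the equality discussion. For the forward direction the paper invokes the equality condition of Theorem \ref{1d1} (that the prescribed vector be an eigenvector) together with $m_3=0$, whereas you bypass the eigenvector condition entirely: equality already forces $m_3=0$, since otherwise $\mu_1(G)\geq Y/Z>\Delta+1$ strictly, and $m_3=0$ plus connectivity yields $\Delta=n-1$; this is leaner and equally valid. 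For the converse the paper merely asserts that $\Delta=n-1$ implies $\mu_1(G)=n$, while you supply the justification via the standard bound $\mu_1(G)\leq n$ (through the complement relation $\mu_1(G)+\mu_{n-1}(\overline{G})=n$ and the disconnectedness of $\overline{G}$), closing a step the paper leaves to the reader. You also correctly flag the tacit hypothesis $m>0$, without which the statement fails for edgeless graphs.
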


\begin{proof} For Laplacian matrix, $\alpha=0$, that is, $B_0=B_0(G)=L(G)$. For $\alpha=0$, from Theorem \ref{1d1}, we obtain $$P=-2\,(\Delta+1),~Q=2,~Y=16\,\Delta(\Delta+1)^2\,\Big((\Delta+1)^2+\frac{m_3}{\Delta}\Big)~\mbox{ and }~Z=16\,\Delta\,(\Delta+1)^3.$$
and hence
 $$\mu_1(G)=\lambda_1(B_0)\geq \frac{Y}{Z}=\Delta+1+\frac{m_3}{\Delta\,(\Delta+1)}\geq \Delta+1$$
as $m_3\geq 0$.

Suppose that $G$ is connected. Then the equality holds if and only if \break ${\bf x}=\left(\displaystyle{\frac{\Delta}{\Delta+1}},\underbrace{-\frac{1}{\Delta+1},\ldots,-\frac{1}{\Delta+1}}_{\Delta},\underbrace{0,\ldots,0}_{n-\Delta-1}\right)^t$ is an eigenvector corresponding to the eigenvalue $\lambda_1(B_0)\,(=\mu_1(G))$ of $B_0$ and $m_3=0$. Since $G$ is connected, then $\Delta=n-1$. If $\Delta=n-1$, then one can easily see that $\mu_1(G)=n=\Delta+1$. This completes the proof of the result.
\end{proof}

\begin{corollary}\label{Cor4.3}
	 Let $G$ be a graph of order $n$ with $m$ edges and maximum degree $\Delta$. Then
     $$q_1(G)\geq \frac{4m}{n}$$
with equality if and only if $G$ is a regular graph.
\end{corollary}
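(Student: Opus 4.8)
The plan is to specialize Theorem \ref{1d1} to $\alpha=\frac{2}{3}$ and then invoke the scaling $Q(G)=3\,B_{2/3}(G)$, so that $q_1(G)=3\,\lambda_1(B_{2/3})$. Since $\frac{2}{3}\neq\frac{1}{2}$, the hypotheses of Theorem \ref{1d1} are met (and $G$ has at least one edge as soon as $m\geq 1$), so I may read off $\lambda_1(B_{2/3})\geq Y/Z$ with $Y,Z$ evaluated at $\alpha=\frac{2}{3}$, and finish by multiplying through by $3$.

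First I would record the two simplifications that make $\alpha=\frac{2}{3}$ tractable: a direct substitution gives $P=Q=\frac{2}{3}$ (so that $P^2=PQ=Q^2$, exactly as in the adjacency case $\alpha=1$ of Corollary \ref{Cor4.1}) and $2\Delta+5\alpha-3\alpha^2=2(\Delta+1)$. With these, each of the three bracketed blocks of $Y$ in \eqref{FG1} collapses to a constant multiple of $(\Delta+1)^2$, and their sum telescopes. The key combinatorial identity is that $E(G)$ is partitioned into the $\Delta$ edges at $v_1$, the $m_1$ edges inside $N_G(v_1)$, the $m_2$ edges inside $V(G)\setminus N_G[v_1]$, and the $m_3$ edges across, so $\Delta+m_1+m_2+m_3=m$; this yields $Y=\frac{64}{243}(\Delta+1)^2\,m$. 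The same substitution in \eqref{FG2}, using $[1+(n-\Delta-1)]+\Delta=n$, gives $Z=\frac{16}{81}(\Delta+1)^2\,n$. Dividing, the factor $(\Delta+1)^2$ cancels and $\lambda_1(B_{2/3})\geq\frac{4m}{3n}$, whence $q_1(G)\geq\frac{4m}{n}$.

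For the equality characterization I would track the extremal vector \eqref{1w2}. At $\alpha=\frac{2}{3}$ one has $2-5\alpha+3\alpha^2=0$, $Q/P=1$, and $\frac{\alpha(3\alpha-1)}{2(2\alpha-1)}=1$, so every coordinate of \eqref{1w2} equals $1$; that is, the extremal vector is the all-ones vector ${\bf 1}$. By the equality clause of Theorem \ref{1d1}, equality holds if and only if ${\bf 1}$ is an eigenvector of $B_{2/3}$ for $\lambda_1(B_{2/3})$, equivalently of $Q$ for $q_1(G)$. Since $(Q\,{\bf 1})_i=d_i+\sum_{v_j\sim v_i}1=2d_i$, the vector ${\bf 1}$ is an eigenvector of $Q$ precisely when all degrees coincide, i.e. when $G$ is regular; conversely, for an $r$-regular graph a direct check gives $q_1=2r=\frac{4m}{n}$, closing the equivalence.

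The only genuine obstacle is organizing the algebraic reduction of $Y$ and $Z$ so that the $(\Delta+1)^2$ factors and the edge-count bookkeeping $\Delta+m_1+m_2+m_3=m$ emerge cleanly; once $P=Q$ is noticed, this mirrors the adjacency computation of Corollary \ref{Cor4.1} almost verbatim, and no new idea is needed beyond the rescaling $q_1(G)=3\,\lambda_1(B_{2/3})$.
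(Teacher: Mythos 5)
Your proposal is correct and follows essentially the same route as the paper: specialize Theorem \ref{1d1} to $\alpha=\tfrac{2}{3}$, compute $P=Q=\tfrac{2}{3}$, $Y=\tfrac{64}{243}(\Delta+1)^2m$, $Z=\tfrac{16}{81}(\Delta+1)^2n$, and rescale via $q_1(G)=3\lambda_1(B_{2/3})$, with equality traced to the all-ones extremal vector in \eqref{1w2} and hence to regularity. Your explicit verification that the eigenvector condition $(Q\mathbf{1})_i=2d_i$ forces equal degrees merely spells out a step the paper asserts without detail.
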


\begin{proof} For signless Laplacian matrix, $\alpha=\frac{2}{3}$, that is, $B_{2/3}=B_{2/3}(G)=\frac{1}{3}\,Q(G)$. For $\alpha=\frac{2}{3}$, from Theorem \ref{1d1}, we obtain $$P=\frac{2}{3}=Q,~Y=\frac{64}{243}\,(\Delta+1)^2\,(\Delta+m_1+m_2+m_3)=\frac{64}{243}\,(\Delta+1)^2\,m,~\mbox{ and }~Z=\frac{16}{81}\,(\Delta+1)^2\,n$$
and hence
 $$\frac{1}{3}\,q_1(G)=\lambda_1(B_{2/3})\geq \frac{Y}{Z}=\frac{4m}{3n},~\mbox{ that is, }~q_1(G)\geq \frac{4m}{n}.$$
Moreover, the equality holds if and only if ${\bf x}=\left(1,1,\ldots,1\right)^t$ is an eigenvector corresponding to the eigenvalue $\lambda_1(B_{2/3})\,(=\frac{1}{3}\,q_1(G))$ of $B_{2/3}$, that is, if and only if $G$ is a regular graph.
\end{proof}
The lower bounds found in Corollaries \ref{Cor4.1}-\ref{Cor4.3} are classical. One can find all of them in \cite{ZS}.

\begin{remark}	{\em
		In the following Table, we give a comparison between the exact value of $\lambda_1(B_\alpha)$ (Proposition \ref{1ws1}) and the lower bound on $\lambda_1(B_\alpha)$ obtained in Theorem \ref{1d1} for the graph $G=K_{1,24}$.
		\begin{center}
			\vspace{6pt}
			\begin{tabular} { | c | c | c | }
				\hline
			  	         \hspace{5mm}$ \alpha $ \hspace{5mm}   &  \hspace{2mm}  Exact value of  $\lambda_1(B_\alpha) $ \hspace{2mm}  & \hspace{2mm}  $\frac{Y}{Z}$  \hspace{2mm}\\[2mm]
				\hline
				 $0$   & $ 25 $ & $ 25$ \\[0.2mm]
				\hline
				$0.1$ & $ 22.317 $ & $22.317$ \\[0.2mm]
				\hline
				$ 0.2 $ & $ 19.658 $ & $ 19.654$ \\[0.2mm]
				\hline
				$ 0.3 $ & $ 17.035 $ & $ 16.997$ \\[0.2mm]
				\hline
				$ 0.4 $ & $ 14.469 $ & $ 13.675$ \\[0.2mm]
				\hline
				$ 0.6 $ & $ 9.703$ & $ 1.978$ \\[0.2mm]
				\hline
				$ 0.7 $ & $ 7.718 $ & $0.936 $ \\[0.2mm]
				\hline
				$ 0.8 $ & $ 6.232 $ & $ 0.250$ \\[0.2mm]
				\hline
				$ 0.9 $ & $ 5.334 $ & $ 0.361$ \\[0.2mm]
				\hline
				$ 1 $ & $ 4.899 $ & $ 1.92$ \\[0.2mm]
				\hline
			\end{tabular}\\			
			\vspace{4pt}
			Table 1. Comparison of the largest eigenvalue $\lambda_1(B_\alpha)$ and $\frac{Y}{Z}$.			
		\end{center}		
	}	
\end{remark}

Next, we observe that the following upper bound is continuous on $ \alpha $.
\begin{theorem}
Let $G$ be a connected graph with maximum degree $\Delta$. Then, for any $\alpha\in[0,1]$,
\begin{align*}
\lambda_1\,(B_\alpha)\leq\begin{cases}
(2-3\,\alpha)\,\Delta\ \text{  if}~\alpha\in[0,\frac{1}{2}],\\[3mm]
\alpha\,\Delta\ ~~~~~~~~\text{   if}~\alpha\in(\frac{1}{2},1].
\end{cases}
\end{align*}
\end{theorem}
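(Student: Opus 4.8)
The plan is to handle the two ranges $\alpha\in[0,\tfrac12]$ and $\alpha\in(\tfrac12,1]$ separately, and in each range to write $B_\alpha$ as a sum of two Hermitian matrices \emph{both of whose scalar coefficients are nonnegative}. Then the additive upper estimate in Corollary \ref{cor1}, together with scalar homogeneity $\lambda_1(cS)=c\,\lambda_1(S)$ (valid precisely when $c\ge 0$), reduces everything to bounding $\lambda_1(A)$, $\lambda_1(L)$, and $\lambda_1(D)$ in terms of $\Delta$.

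For $\alpha\in[0,\tfrac12]$ I would use the representation $B_\alpha=(1-2\alpha)L+\alpha D$, noting $1-2\alpha\ge 0$ and $\alpha\ge 0$. By Corollary \ref{cor1},
\[
\lambda_1(B_\alpha)\le (1-2\alpha)\lambda_1(L)+\alpha\lambda_1(D)=(1-2\alpha)\mu_1(G)+\alpha\Delta .
\]
Since $\lambda_1(D)=\Delta$, and Theorem \ref{th2.2} gives $\mu_1(G)\le\max_{v_iv_j\in E(G)}(d_i+d_j)\le 2\Delta$, substitution yields $\lambda_1(B_\alpha)\le 2(1-2\alpha)\Delta+\alpha\Delta=(2-3\alpha)\Delta$, as claimed.

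For $\alpha\in(\tfrac12,1]$ I would instead use $B_\alpha=(2\alpha-1)A+(1-\alpha)D$, where $2\alpha-1>0$ and $1-\alpha\ge 0$ are again both nonnegative. Corollary \ref{cor1} then gives
\[
\lambda_1(B_\alpha)\le (2\alpha-1)\lambda_1(A)+(1-\alpha)\lambda_1(D)=(2\alpha-1)\rho_1(G)+(1-\alpha)\Delta .
\]
The only auxiliary fact needed is the elementary bound $\rho_1(G)\le\Delta$, which I would supply with a one-line Perron/Rayleigh argument: for $G$ connected take the positive Perron eigenvector $\mathbf{x}$ of $A$ and a coordinate $x_k=\max_i x_i>0$, so that $\rho_1(G)x_k=\sum_{j\sim k}x_j\le d_k x_k\le\Delta x_k$. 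This gives $\lambda_1(B_\alpha)\le(2\alpha-1)\Delta+(1-\alpha)\Delta=\alpha\Delta$.

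There is no genuine obstacle in this argument; the entire content is choosing, in each $\alpha$-window, the decomposition of $B_\alpha$ whose two coefficients are \emph{simultaneously} nonnegative, because Corollary \ref{cor1} only controls $\lambda_1$ of a sum additively and the scalar rule $\lambda_1(cS)=c\,\lambda_1(S)$ fails for $c<0$. The only care is to record the two standard spectral-radius bounds $\mu_1(G)\le 2\Delta$ (through Theorem \ref{th2.2}) and $\rho_1(G)\le\Delta$, and to observe that the two branches agree at $\alpha=\tfrac12$, where $(2-3\alpha)\Delta$ and $\alpha\Delta$ both equal $\tfrac{\Delta}{2}$ (indeed $B_{1/2}=\tfrac12 D$ has $\lambda_1=\tfrac{\Delta}{2}$, so the bound is attained there), matching the continuity remarked just before the statement.
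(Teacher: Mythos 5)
Your proof is correct, and it departs from the paper's in the second half. For $\alpha\in[0,\frac12]$ your argument is exactly the paper's: write $B_\alpha=(1-2\alpha)L+\alpha D$ (both coefficients nonnegative), apply Corollary \ref{cor1}, and control $\mu_1(G)\le\max_{v_iv_j\in E(G)}(d_i+d_j)\le 2\Delta$ via Theorem \ref{th2.2}, giving $(2-3\alpha)\Delta$. For $\alpha\in(\frac12,1]$, however, the paper does not decompose $B_\alpha$ at all: it notes that $B_\alpha=(2\alpha-1)A+(1-\alpha)D$ is entrywise nonnegative on this range, takes an eigenvector $\mathbf{x}$ of $B_\alpha$ for $\lambda_1(B_\alpha)$ with maximal coordinate $x_k>0$, and bounds the $k$-th eigenvalue equation directly,
\[
\lambda_1(B_\alpha)\,x_k=(2\alpha-1)\sum_{v_j:\,v_jv_k\in E(G)}x_j+(1-\alpha)\,d_k\,x_k\le (2\alpha-1)\,d_k\,x_k+(1-\alpha)\,d_k\,x_k=\alpha\,d_k\,x_k\le\alpha\,\Delta\,x_k,
\]
whereas you apply Corollary \ref{cor1} a second time to split off $(2\alpha-1)\lambda_1(A)+(1-\alpha)\Delta$ and then invoke $\rho_1(G)\le\Delta$, which you justify by precisely the same maximal-coordinate argument, only applied to $A$ rather than to $B_\alpha$. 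Both routes are rigorous (connectivity, assumed in the statement, covers your Perron step, though $\rho_1(G)\le\Delta$ holds for all graphs). What yours buys is uniformity: both windows are handled by the single template of decomposing with nonnegative coefficients and applying Weyl, with the eigenvector trick quarantined inside the classical fact $\rho_1(G)\le\Delta$. What the paper's buys is self-containedness and parallelism with its lower-bound theorem for $\lambda_1(B_\alpha)$, which uses the identical maximal-coordinate computation on $B_\alpha$; working directly with $B_\alpha$ also keeps the inequality chain in a form amenable to equality analysis, which a double application of Weyl obscures. Your closing observation that the two branches agree at $\alpha=\frac12$ with value $\frac{\Delta}{2}=\lambda_1(B_{1/2})$ is a nice sanity check not present in the paper.
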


\begin{proof}
First we assume that $\alpha\in[0,\frac{1}{2}]$. Then, by using $\alpha\geq 0$, $(1-2\alpha)\geq 0$, Corollary \ref{cor1}, and Theorem \ref{th2.2}, we have
\begin{align*}
\lambda_1(B_\alpha)=\lambda_{1}\Big(\alpha D+(1-2\alpha)L\Big) &\leq\lambda_{1}(\alpha D)+\lambda_{1}\Big((1-2\alpha)L\Big)\leq \alpha \Delta +(1-2\alpha)(2\Delta).
\end{align*}
That is, $\lambda_1(B_\alpha)\leq (2-3\alpha)\Delta$ for all $\alpha\in[0,\frac{1}{2}]$.

\vspace*{3mm}

Next we assume that $\alpha\in (\frac{1}{2},1]$. That is, $2\alpha-1>0$. Let ${\bf x}=(x_1,x_2, \dots, x_n)^t$ be an eigenvector of $B_{\alpha}$ corresponding to the eigenvalue $\lambda_{1}(B_{\alpha})$ such that $x_k=\max\limits_{1\leq i\leq n}\,x_i>0$. Then we obtain
\begin{align*}
\lambda_{1}(B_{\alpha})\,x_k\,&\,=(2\alpha-1)\sum_{v_j:v_jv_k \in E(G)}x_j+(1-\alpha)\,d_k\,x_k\\[1mm]
&\,\leq\,(2\alpha-1)\,d_k\,x_k+(1-\alpha)\,d_k\,x_k\\[1mm]
&\,=\,\alpha d_k\,x_k\\[1mm]
&\,\leq\, \alpha \Delta x_k.
\end{align*}
Therefore, $\lambda_1(B_{\alpha})\leq \alpha\Delta$ for all $\alpha\in(\frac{1}{2},1]$.
\end{proof}

For $\alpha \in [0,1]$ and non-negative integers $a $ and $b$, define $ f_\alpha(a,b)=\frac{(1-\alpha)\,(a+b)+\sqrt{(1-\alpha)^2\,(a-b)^2+4(2\alpha-1)^2\,ab}}{2}.$

\begin{theorem} Let $G=(U,\,W,\,E)$ be a connected bipartite graph, where $|U|=a,$ $|W|=b$. For $\alpha \in [0,1]$,
	\begin{equation}
		\lambda_1(B_{\alpha})\leq f_\alpha(a,b) \label{1dh1}
	\end{equation}
	with equality if and only if $G\cong K_{a,b}$.
\end{theorem}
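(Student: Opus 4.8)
The plan is to exploit the bipartite block structure of $B_\alpha$ and estimate its Rayleigh quotient. Write $U=\{u_1,\dots,u_a\}$, $W=\{w_1,\dots,w_b\}$ and let $N$ be the $a\times b$ biadjacency matrix, so that in the block form induced by $V(G)=U\cup W$ one has $B_\alpha=(2\alpha-1)A+(1-\alpha)D$ with $A=\left(\begin{smallmatrix}0&N\\N^t&0\end{smallmatrix}\right)$. Since every edge joins $U$ to $W$, we have $d(u_i)\le b$ and $d(w_j)\le a$. For an arbitrary vector ${\bf x}=({\bf y}^t,{\bf z}^t)^t$ split over $U$ and $W$,
$$ {\bf x}^t B_\alpha {\bf x}=(1-\alpha)\sum_{i} d(u_i)\,y_i^2+(1-\alpha)\sum_{j} d(w_j)\,z_j^2+2(2\alpha-1)\,{\bf y}^t N {\bf z}. $$
I would bound the two diagonal sums by $(1-\alpha)b\|{\bf y}\|^2$ and $(1-\alpha)a\|{\bf z}\|^2$ (using $1-\alpha\ge 0$), and the cross term by $2(2\alpha-1){\bf y}^tN{\bf z}\le 2|2\alpha-1|\,\|N\|_2\,\|{\bf y}\|\,\|{\bf z}\|\le 2|2\alpha-1|\sqrt{ab}\,\|{\bf y}\|\,\|{\bf z}\|$, where $\|N\|_2\le\|N\|_F=\sqrt{m}\le\sqrt{ab}$. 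Crucially this estimate is insensitive to the sign of $2\alpha-1$, so it is uniform over all $\alpha\in[0,1]$.

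The bound then collapses to a $2\times2$ problem: the right-hand side equals ${\bf v}^t M {\bf v}$ for ${\bf v}=(\|{\bf y}\|,\|{\bf z}\|)^t$ and
$$ M=\begin{pmatrix}(1-\alpha)\,b & |2\alpha-1|\sqrt{ab}\\[1mm] |2\alpha-1|\sqrt{ab} & (1-\alpha)\,a\end{pmatrix}. $$
A direct computation gives $\tr M=(1-\alpha)(a+b)$ and $\det M=\big[(1-\alpha)^2-(2\alpha-1)^2\big]ab$, whence $\lambda_{\max}(M)=f_\alpha(a,b)$. Therefore ${\bf x}^t B_\alpha {\bf x}\le f_\alpha(a,b)\,\|{\bf x}\|^2$ for all ${\bf x}$, and since $\lambda_1(B_\alpha)=\max_{\|{\bf x}\|=1}{\bf x}^t B_\alpha {\bf x}$, the inequality \eqref{1dh1} follows.

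For the equality, the ``if'' direction is immediate from Proposition \ref{1ws1}: $f_\alpha(a,b)$ is the largest eigenvalue of $B_\alpha(K_{a,b})$ (it dominates the remaining eigenvalues $(1-\alpha)a$ and $(1-\alpha)b$), the case $\alpha=1$ being checked directly from the adjacency spectrum $\{\pm\sqrt{ab},0,\dots,0\}$ of $K_{a,b}$. For ``only if'' I would trace back the equalities. The diagonal estimate is tight only if $d(u_i)=b$ (resp. $d(w_j)=a$) at every coordinate where the chosen top eigenvector ${\bf x}$ is nonzero. For $\alpha\in(\tfrac12,1)$, $B_\alpha$ is nonnegative and irreducible (as $G$ is connected), so by Perron--Frobenius ${\bf x}>0$ everywhere, forcing all degrees to be full, i.e. $G\cong K_{a,b}$. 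At $\alpha=1$ the diagonal term disappears, so I would instead combine $\|N\|_2=\sqrt{ab}$ with $\|N\|_2\le\|N\|_F=\sqrt m$ to get $m=ab$, hence $N=J$. For $\alpha\in[0,\tfrac12)$ the maximizer need not be sign-definite, so I would switch to the decomposition $B_\alpha(G)=B_\alpha(K_{a,b})-B_\alpha(H)$, where $H$ is the bipartite complement of $G$ in $K_{a,b}$ and $B_\alpha(H)$ is positive semidefinite (Corollary \ref{Cor 3.2}); equality then forces ${\bf x}$ to be simultaneously the top eigenvector of $B_\alpha(K_{a,b})$ (which is simple and nowhere zero) and an element of $\ker B_\alpha(H)$, impossible as soon as $H$ has an edge.

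The inequality is routine; the real difficulty is the equality analysis, and its crux is that the sign pattern of the extremal eigenvector changes across $\alpha=\tfrac12$. For $\alpha>\tfrac12$ Perron--Frobenius hands us a positive eigenvector and the argument is short, whereas for $\alpha<\tfrac12$ no such vector is available and one must pass to the complement decomposition and a kernel computation. I expect the genuinely exceptional point to be $\alpha=\tfrac12$ itself, where $B_{1/2}=\tfrac12 D$ is diagonal with $\lambda_1=\tfrac12\Delta$: here equality can occur for graphs other than $K_{a,b}$ (already for $P_4$ with $a=b=2$), so the ``only if'' part is to be understood for $\alpha\neq\tfrac12$, and this degenerate value must be excluded or treated by hand.
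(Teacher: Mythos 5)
Your proof is correct in its essentials and takes a genuinely different route from the paper's. The paper splits into five regimes: it dispatches $\alpha\in\{0,\tfrac12,1\}$ by quoting interlacing and degree monotonicity, and on the open intervals $(0,\tfrac12)$ and $(\tfrac12,1)$ it runs an entrywise argument on a top eigenvector (extremal components on $U$ and on $W$ yield $\bigl[\lambda_1-(1-\alpha)d_i\bigr]\bigl[\lambda_1-(1-\alpha)d_j\bigr]\le(2\alpha-1)^2d_id_j$, hence a quadratic inequality for $\lambda_1(B_\alpha)$), tracing equality back through these componentwise steps. Your argument proves the inequality for all $\alpha\in[0,1]$ at one stroke: block Rayleigh quotient, the degree bounds $d(u_i)\le b$, $d(w_j)\le a$, the operator-norm bound $\|N\|_2\le\|N\|_F\le\sqrt{ab}$, and the reduction to the $2\times2$ matrix $M$ with $\lambda_{\max}(M)=f_\alpha(a,b)$; writing $|2\alpha-1|$ makes the estimate sign-insensitive, so no case analysis is needed for the bound itself. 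What the paper's method buys is finer intermediate information in terms of the actual degrees $d_i,d_j$; what yours buys is brevity, uniformity in $\alpha$, and an honest treatment of cases the paper skips. In particular, your decomposition $B_\alpha(G)=B_\alpha(K_{a,b})-B_\alpha(H)$, with $H$ the bipartite complement and $B_\alpha(H)$ positive semidefinite, replaces the paper's unsubstantiated ``Similarly as Case 1'' for $0<\alpha<\tfrac12$ --- a step that genuinely needs justification there, since Perron--Frobenius positivity of the extremal eigenvector is unavailable when the off-diagonal entries of $B_\alpha$ are negative.

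Two further remarks. First, your observation at $\alpha=\tfrac12$ is correct and identifies an error in the theorem as stated: for $P_4$ with $a=b=2$ one has $\lambda_1(B_{1/2})=\tfrac12\Delta=1=f_{1/2}(2,2)$ while $P_4\not\cong K_{2,2}$; equality at $\alpha=\tfrac12$ holds precisely when $\Delta(G)=\max\{a,b\}$. The paper's proof never addresses the ``only if'' direction at $\alpha\in\{0,\tfrac12,1\}$ (at $0$ and $1$ the claim is true but left unproved; at $\tfrac12$ it is false), so this exceptional value must indeed be excluded, exactly as you say. Second, a small patch is needed in your kernel argument at the endpoint $\alpha=0$: $\ker B_0(H)=\ker L(H)$ is spanned by the component indicators of $H$, so it \emph{does} contain nowhere-zero vectors, and ``impossible as soon as $H$ has an edge'' does not follow from kernel considerations alone. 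The fix is one line: the top eigenvector of $L(K_{a,b})$ equals $b$ on $U$ and $-a$ on $W$; an edge of $H$ lies in a single component of $H$ meeting both parts, and constancy of a kernel vector of $L(H)$ on that component would force $b=-a$, a contradiction. For $\alpha\in(0,\tfrac12)$ your argument stands as written, since every component of $H$ containing an edge yields a positive definite block of $B_\alpha(H)$ by Theorem \ref{Th3.2}, so any kernel vector vanishes on it, contradicting that the top eigenvector of $B_\alpha(K_{a,b})$ is nowhere zero.
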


\begin{proof} Without loss of generality, assume that $a\geq b$. Now, from Proposition \ref{1ws1}, we have $f_\alpha(a,b)=\lambda_1(B_\alpha(K_{a,b}))$. Since, by definition, $B_0=B_0(G)=L(G)$ and $G\subseteq K_{a,b}$, by (edge) interlacing and Proposition \ref{1ws1}, $$\lambda_1(B_0)=\mu_1(G)\leq a+b=f_0(a,b).$$ Since $B_{\frac{1}{2}}=B_{\frac{1}{2}}(G)=\frac{1}{2}\,D(G)$, we have $$	\lambda_1(B_{\frac{1}{2}})=\frac{1}{2}\Delta(G)\leq \frac{1}{2}\Delta(K_{a,b})=\frac{1}{2}\,a=f_{\frac{1}{2}}(a,b).$$
	
	As $B_{1}=B_{1}(G)=A(G)$, by (vertex) interlacing and Proposition \ref{1ws1}, we have
	$$	\lambda_1(B_{1})=\rho_1(G)\leq \rho_1(K_{a,b})=f_1(a,b).$$
	
	So we have to prove the result in (\ref{1dh1}) for $0<\alpha<\frac{1}{2}$ and $\frac{1}{2}<\alpha<1$. Let ${\bf x}=(x_1,x_2,\ldots,x_n)^T$ be an eigenvector corresponding to the largest eigenvalue $\lambda_1(B_{\alpha})$ of $B_\alpha$. Then $B_\alpha{\bf x}=	\lambda_1(B_{\alpha}){\bf x}$. We consider two cases:
	
	\vspace*{3mm}
	
	\noindent
	${\bf Case\,1.}$ $\frac{1}{2}<\alpha<1$. Let $x_i=\max\limits_{1\leq k\leq n}\,x_k$. Without loss of generality, we can assume that $v_i\in U$. Let $x_j=\max\limits_{v_k\in W}\,x_k$. For $v_i\in U$, we obtain
	$$\lambda_1(B_{\alpha})\,x_i=(1-\alpha)\,d_ix_i+(2\alpha-1)\,\sum\limits_{v_k:v_iv_k\in E(G)}\,x_k\leq (1-\alpha)\,d_ix_i+(2\alpha-1)\,d_ix_j,$$
	that is,
	\begin{equation}
		\Big[\lambda_1(B_{\alpha})-(1-\alpha)\,d_i\Big]\,x_i\leq (2\alpha-1)\,d_ix_j.\label{1r1}
	\end{equation}
	Similarly, for $v_j\in W$, we obtain
	\begin{equation}
		\Big[\lambda_1(B_{\alpha})-(1-\alpha)\,d_j\Big]\,x_j\leq (2\alpha-1)\,d_jx_i.\label{1r2}
	\end{equation}
	
	From the above two results, we obtain
	\begin{align}
		&\Big[\lambda_1(B_{\alpha})-(1-\alpha)\,b\Big]\,\Big[\lambda_1(B_{\alpha})-(1-\alpha)\,a\Big]\nonumber\\
		\leq &\Big[\lambda_1(B_{\alpha})-(1-\alpha)\,d_i\Big]\,\Big[\lambda_1(B_{\alpha})-(1-\alpha)\,d_j\Big]\nonumber\\
		\leq &(2\alpha-1)^2\,d_i\,d_j\leq (2\alpha-1)^2\,ab.\label{1r3}
	\end{align}
	Thus we obtain
	$$\lambda_1(B_{\alpha})^2-(1-\alpha)\,(a+b)\lambda_1(B_{\alpha})+(1-\alpha)^2\,ab-(2\alpha-1)^2\,ab\leq 0,$$
	that is,
	$$\lambda_1(B_{\alpha})\leq \frac{(1-\alpha)\,(a+b)+\sqrt{(1-\alpha)^2\,(a-b)^2+4(2\alpha-1)^2\,ab}}{2}=f_\alpha(a,b).$$
	The first part of the proof is done.
	
	\vspace*{3mm}
	
	Suppose that equality holds. Then all inequalities in the above argument must be equalities. From equalities in (\ref{1r1}) and (\ref{1r2}), we obtain $x_k=x_i$ for all $v_k\in N_G(v_j) \subseteq U$ and $x_{\ell}=x_j$ for all $v_{\ell}\in N_G(v_i)\subseteq W$. From equality in (\ref{1r3}), we obtain
	$d_i=b$ and $d_j=a$. Since $G$ is a connected bipartite graph, one can easily prove that $x_k=x_i$ for all $v_k\in U$ and $x_{\ell}=x_j$ for all $v_{\ell}\in W$. For $v_i,\,v_k\in U$, we have
	$$\lambda_1(B_{\alpha})\,x_i=(1-\alpha)\,d_ix_i+(2\alpha-1)\,\sum\limits_{v_k:v_iv_k\in E(G)}\,x_k=b\,\Big[(1-\alpha)\,x_i+(2\alpha-1)\,x_j\Big]$$
	and
	$$\lambda_1(B_{\alpha})\,x_i=d_k\,\Big[(1-\alpha)\,x_i+(2\alpha-1)\,x_j\Big].$$
	Thus we have
	$$b\,\Big[(1-\alpha)\,x_i+(2\alpha-1)\,x_j\Big]=d_k\,\Big[(1-\alpha)\,x_i+(2\alpha-1)\,x_j\Big],$$
	that is,
	$$\Big(b-d_k\Big)\,\Big[(1-\alpha)\,x_i+(2\alpha-1)\,x_j\Big]=0.$$
	Since all the elements in $B_{\alpha}$ are non-negative, by Perron-Frobenius theorem in matrix theory, we obtain that all the eigencomponents corresponding to the spectral radius $\lambda_1(B_{\alpha})$ are non-negative. Since $G$ is connected, $x_i\geq x_j>0$. From the above with $\frac{1}{2}<\alpha<1$, we must have $d_k=b$ for any $v_k\in U$. Similarly, $d_{\ell}=a$ for any $v_{\ell}\in W$. Hence $G\cong K_{a,b}$.
	
	\vspace*{3mm}
	
	\noindent
	${\bf Case\,2.}$ $0<\alpha<\frac{1}{2}$. Let $x_i=\max_{1\leq k\leq n}\,x_k$. Without loss of generality, we can assume that $v_i\in U$. Let $x_j=\min_{v_k:v_iv_k\in E(G)}\,x_k$. For $v_i\in U$, we obtain
	$$\lambda_1(B_{\alpha})\,x_i=(1-\alpha)\,d_ix_i+(2\alpha-1)\,\sum\limits_{v_k:v_iv_k\in E(G)}\,x_k\leq (1-\alpha)\,d_ix_i+(2\alpha-1)\,d_ix_j,$$
	that is,
	$$\Big[\lambda_1(B_{\alpha})-(1-\alpha)\,d_i\Big]\,x_i\leq (2\alpha-1)\,d_ix_j.$$
	Similarly, for $v_j\in W$, we obtain
	$$\Big[\lambda_1(B_{\alpha})-(1-\alpha)\,d_j\Big]\,x_j\geq (2\alpha-1)\,d_jx_i.$$
	Since $\alpha<\frac{1}{2}$, from the above two results, we obtain
	\begin{align*}
		&\Big[\lambda_1(B_{\alpha})-(1-\alpha)\,d_i\Big]\,\Big[\lambda_1(B_{\alpha})-(1-\alpha)\,d_j\Big]\,x_i\\
		\leq & (2\alpha-1)\,d_i\,\Big[\lambda_1(B_{\alpha})-(1-\alpha)\,d_j\Big]\,x_j\\
		\leq & (2\alpha-1)^2\,d_i\,d_j\,x_i,
	\end{align*}
	that is,
	$$\Big[\lambda_1(B_{\alpha})-(1-\alpha)\,d_i\Big]\,\Big[\lambda_1(B_{\alpha})-(1-\alpha)\,d_j\Big]\leq (2\alpha-1)^2\,d_i\,d_j,$$
	that is,
	\begin{align*}
		\Big[\lambda_1(B_{\alpha})-(1-\alpha)\,b\Big]\,\Big[\lambda_1(B_{\alpha})-(1-\alpha)\,a\Big]\leq (2\alpha-1)^2\,ab.
	\end{align*}
	Thus we obtain
	$$\lambda_1(B_{\alpha})^2-(1-\alpha)\,(a+b)\lambda_1(B_{\alpha})+(1-\alpha)^2\,ab-(2\alpha-1)^2\,ab\leq 0,$$
	that is,
	$$\lambda_1(B_{\alpha})\leq \frac{(1-\alpha)\,(a+b)+\sqrt{(1-\alpha)^2\,(a-b)^2+4(2\alpha-1)^2\,ab}}{2}=f_{\alpha}(a,b).$$
	
	\vspace*{3mm}
	
	Suppose that equality holds. Similarly as ${\bf Case\,1}$, one can easily prove that $G\cong K_{a,b}$.
	
	\vspace*{3mm}
	
	Conversely, let $G\cong K_{a,b}$. By Proposition \ref{1ws1}, we obtain
	$$\lambda_1(B_{\alpha})=\frac{(1-\alpha)\,(a+b)+\sqrt{(1-\alpha)^2\,(a-b)^2+4(2\alpha-1)^2\,ab}}{2}=f_{\alpha}(a,b).$$
This completes the proof of the theorem.
\end{proof}

\subsection{Bounds on the smallest eigenvalue}
We establish an upper bound on the smallest eigenvalue of a $B_\alpha$-matrix in terms of the chromatic number. Then, we characterize the extremal graphs for some cases. Finally, some known results are derived as a consequence.

\begin{theorem}\label{Th4.4}
	Let $ G $ be a graph of $ n$ vertices, $ m ~(>0)$ edges and chromatic number $ \chi$. Then, for any $ \alpha \in [0,1] $,
	\begin{equation}\label{eq5}
		\lambda_n\,(B_\alpha)\,\leq\, \frac{2\,m}{n}\,\left(\frac{\chi\,(1-\alpha)-\alpha}{\chi-1}\right).
	\end{equation}
\end{theorem}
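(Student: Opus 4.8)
The plan is to produce, from a fixed proper colouring, a single (complex) test vector whose Rayleigh quotient is at most the right-hand side of \eqref{eq5}; since $B_\alpha$ is real symmetric, the Rayleigh--Ritz characterization gives $\lambda_n(B_\alpha)=\min_{z\neq 0}(z^*B_\alpha z)/(z^*z)$ over complex $z$ as well, so any such vector furnishes the desired upper bound. Because $m>0$ we have $\chi\geq 2$, so the denominator $\chi-1$ does not vanish and the construction below is nonvacuous. First I would fix an optimal proper colouring with colour classes $V_1,\ldots,V_\chi$, let $\omega$ be a primitive $\chi$-th root of unity, and for $t=1,2,\ldots,\chi-1$ define the vector $z^{(t)}$ by $z^{(t)}_v=\omega^{tk}$ whenever $v\in V_k$. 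Every entry has modulus one, so $(z^{(t)})^*z^{(t)}=n$ for each $t$, whence $\sum_{t=1}^{\chi-1}(z^{(t)})^*z^{(t)}=(\chi-1)\,n$.

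Next I would evaluate the two ingredients of $B_\alpha=(2\alpha-1)A+(1-\alpha)D$ in these vectors, summed over $t$. The degree part is immediate: $(z^{(t)})^*Dz^{(t)}=\sum_v d_v\,|z^{(t)}_v|^2=2m$ for every $t$, so its total is $(\chi-1)\,2m$. The adjacency part is where the colouring enters: since the colouring is proper, every edge joins two \emph{distinct} classes $V_k,V_l$ with $k\neq l$, contributing $\omega^{t(l-k)}+\omega^{t(k-l)}$ to $(z^{(t)})^*Az^{(t)}$. Summing over $t=1,\ldots,\chi-1$ and using $\sum_{t=0}^{\chi-1}\omega^{ts}=0$ for $s\not\equiv 0\pmod\chi$ (hence $\sum_{t=1}^{\chi-1}\omega^{ts}=-1$), each edge contributes exactly $-2$, giving $\sum_{t=1}^{\chi-1}(z^{(t)})^*Az^{(t)}=-2m$.

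Combining the two computations yields
\[
\sum_{t=1}^{\chi-1}(z^{(t)})^*B_\alpha z^{(t)}=(2\alpha-1)(-2m)+(1-\alpha)(\chi-1)(2m)=2m\big(\chi(1-\alpha)-\alpha\big).
\]
By the averaging principle (the weights $(z^{(t)})^*z^{(t)}=n$ are positive, so some ratio is at most the ratio of the sums), there is an index $t^*$ with
\[
\frac{(z^{(t^*)})^*B_\alpha z^{(t^*)}}{(z^{(t^*)})^*z^{(t^*)}}\leq\frac{2m\big(\chi(1-\alpha)-\alpha\big)}{(\chi-1)\,n}=\frac{2m}{n}\left(\frac{\chi(1-\alpha)-\alpha}{\chi-1}\right).
\]
As $B_\alpha$ is Hermitian, the left-hand side is at least $\lambda_n(B_\alpha)$, and the theorem follows.

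The crux, and the step I would write most carefully, is the adjacency computation: properness of the colouring is precisely what forces the phase difference $l-k$ of each edge to be a nonzero residue modulo $\chi$, which is exactly what makes the root-of-unity sum independent of the particular pair of classes and equal to $-1$. I would also flag that discarding the index $t=0$ (the constant vector $\mathbf{1}$) is essential: including it would contribute $\mathbf{1}^*B_\alpha\mathbf{1}=2m\alpha$ and shift the average away from the claimed value. Everything else is a routine assembly that holds uniformly in $\alpha\in[0,1]$, since the identity $B_\alpha=(2\alpha-1)A+(1-\alpha)D$ and the Rayleigh bound are used without any restriction on $\alpha$.
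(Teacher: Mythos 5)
Your proof is correct, but it takes a genuinely different route from the paper's. The paper also works from a proper colouring, but it uses the real test vectors $x^{(j)}$ valued $\chi-1$ on the class $V_j$ and $-1$ elsewhere, one for each of the $\chi$ classes; the resulting Rayleigh inequalities involve the class-dependent quantities $|V_j|$ and $m_j=\sum_{v\in V_j}d(v)$, and the bound emerges only after summing all $\chi$ inequalities, when these quantities telescope via $\sum_j|V_j|=n$ and $\sum_j m_j=2m$. You instead use the discrete-Fourier (root-of-unity) family $z^{(t)}_v=\omega^{tk}$ for $v\in V_k$, $t=1,\dots,\chi-1$, which is essentially the Hoffman-type character trick: every $z^{(t)}$ has norm $\sqrt{n}$ and degree part exactly $2m$ independently of class sizes and of the degree distribution across classes, so only the adjacency part needs averaging, and character orthogonality ($\sum_{t=1}^{\chi-1}\omega^{ts}=-1$ for $s\not\equiv 0 \pmod\chi$, valid precisely because properness forces $s\neq 0$) makes each edge contribute $-2$ in total regardless of which pair of classes it joins. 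Your bookkeeping is therefore cleaner and more uniform; your appeal to the complex Rayleigh--Ritz characterization for the real symmetric $B_\alpha$ is legitimate, and your observation that $m>0$ forces $\chi\geq 2$ handles the degenerate cases. What the paper's real-vector setup buys in exchange is that the same vectors $x^{(j)}$ are reused verbatim in the equality analysis (Theorem \ref{Th4.7}): when equality holds, each $x^{(j)}$ is an eigenvector of $B_\alpha$ for $\lambda_n$, and comparing components of $B_\alpha x^{(j)}=\lambda_n x^{(j)}$ drives the structural characterization ($G$ regular, equal class sizes, equitable adjacency between classes). Your averaging step only guarantees a good $t^*$ rather than equality for every member of the family, so an equality characterization along your lines would need an extra argument (equality in the average forces equality for all $t$, hence each $z^{(t)}$ is an eigenvector); this is doable but not automatic, and it is the one place where the paper's choice of vectors is structurally more convenient.
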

\begin{proof}
	Set $B_\alpha:=B_\alpha(G)$. Partition the vertex set $ V(G) $ as $ \chi $ number of subsets $ V_1, V_2, \dots, V_{\chi} $, where each subset contains the vertices having the same colour. For each $ j\in\{ 1,2,\dots, \chi\} $, define a vector ${\bf x}:=(x_1,x_2, \dots,x_n)^t$ as follows:
	$$
	x_i = \left\{ \begin{array}{cc}
		\chi-1 & \mbox{ for $v_i\in V_j$} \\[3mm]
		-1 & \mbox{ otherwise.}
	\end{array}\right.$$
	
	Then
$$||{\bf x}||\,^2=\sum\limits_{i=1}^{n}x_i^2=(\chi-1)^2\,|V_j|+(n-|V_j|)=\chi\,(\chi-2)\,|V_j|+n.$$
For $j\in \{ 1,2, \dots, \chi\}$, define $m_j=\sum\limits_{v\in V_j}d(v)$. Now,
  $$\langle B_\alpha {\bf x},\, {\bf x}\rangle=\langle (2\alpha-1)\,A{\bf x},\,{\bf x} \rangle+\langle (1-\alpha)\,D{\bf x},\,{\bf x}\rangle.$$
Note that
$$\langle A{\bf x},\,{\bf x}\rangle=2\sum\limits_{v_iv_j\in E(G)}x_i\,x_j=2\,(1-\chi)\,m_j+2\,(m-m_j)$$
and
$$\langle D{\bf x},\, {\bf x}\rangle=\sum\limits_{v_i\in V}d_i\,x_i^2\break=(\chi-1)^2\,m_j+(2m-m_j)=\chi\,(\chi-2)\,m_j+2m.$$
Thus we obtain
	\begin{align*}
		\langle B_\alpha {\bf x},\,{\bf x}\rangle&=(2\alpha-1)\,\langle A{\bf x},\,{\bf x} \rangle+(1-\alpha)\,\langle D{\bf x},\,{\bf x}\rangle\\
		&=2\,(2\alpha-1)\,(1-\chi)\,m_j+2\,(2\alpha-1)\,(m-m_j)+(1-\alpha)\,\chi\,(\chi-2)\,m_j+2\,(1-\alpha)\,m\\
		&=2m\,\alpha+\Big(\chi-\alpha\,(\chi+2)\Big)\,\chi m_j.
	\end{align*}
Using Rayleigh quotient, we obtain
 $$\lambda_{n}\,(B_\alpha)\,||{\bf x}||^2\leq \langle B_\alpha {\bf x},\,{\bf x} \rangle.$$
Therefore, by the above inequalities, we have	
	   $$\lambda_{n}(B_\alpha)\,\Big(\chi\,(\chi-2)\,|V_j|+n\Big)\leq 2m\alpha+\Big(\chi-\alpha\,(\chi+2)\Big)\,\chi\,m_j,$$
that is,	
		\vspace*{-1mm}
		$$\sum\limits_{j=1}^{\chi}\lambda_{n}(B_\alpha)\,\Big(\chi\,(\chi-2)\,|V_j|+n\Big)\,\leq \sum\limits_{j=1}^{\chi}\left(2m\alpha+\Big(\chi-\alpha\,(\chi+2)\Big)\,\chi \,m_j\right),$$
   that is,
 \vspace*{-1mm}
		$$\lambda_{n}(B_\alpha)\,\Big((\chi^2-2\,\chi)\,n+n\,\chi\Big)\leq 2\,\chi^2\,m-2\,\alpha \chi^2\, m-2\,m\,\chi\, \alpha.$$
	Therefore, $$\lambda_n\,(B_\alpha)\,\leq\, \frac{2\,m}{n}\,\left(\frac{\chi\,(1-\alpha)-\alpha}{\chi-1}\right).
	$$
\end{proof}

In the next couple of results, we partially characterize the graphs attaining the equality in \eqref{eq5} of Theorem \ref{Th4.4}. The proofs technique is similar to the one used in \cite{Prof.K.C.Das-1}.
\begin{theorem}\label{Th4.6}
	Let $ G $ be a bipartite graph of $n$ vertices with $m$ edges. Then, for any $\alpha \in [0,1]$,
\begin{equation} \label{eq6}
\lambda_n(B_\alpha) \leq\frac{2m}{n}\,(2-3\alpha).
\end{equation}
Equality occurs if and only if either $\alpha=\frac{2}{3}$, or $G$ is regular with $\alpha\geq \frac{1}{2}$.
\end{theorem}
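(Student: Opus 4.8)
The plan is to derive the inequality \eqref{eq6} as the $\chi=2$ instance of Theorem \ref{Th4.4}, and then to read off the equality cases by tracking where the (only) inequalities in that proof become tight. Since $G$ is bipartite with $m>0$, its chromatic number is $\chi(G)=2$; substituting $\chi=2$ into \eqref{eq5} gives $\lambda_n(B_\alpha)\le \frac{2m}{n}\cdot\frac{2(1-\alpha)-\alpha}{2-1}=\frac{2m}{n}(2-3\alpha)$, which is exactly \eqref{eq6}. (The degenerate case $m=0$ is trivial, both sides being $0$.)

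For the equality analysis I would first recall that the bound in Theorem \ref{Th4.4} is produced by summing, over the color classes, the Rayleigh inequalities $\lambda_n(B_\alpha)\,\|\mathbf{x}^{(j)}\|^2\le \langle B_\alpha \mathbf{x}^{(j)},\mathbf{x}^{(j)}\rangle$. Hence equality in \eqref{eq6} forces equality in each such inequality, i.e.\ every test vector $\mathbf{x}^{(j)}$ must lie in the eigenspace of $\lambda_n(B_\alpha)$. For $\chi=2$ the two test vectors are negatives of one another, so the condition reduces to a single requirement on the vector $\mathbf{x}$ that is $+1$ on one part and $-1$ on the other. The key bipartite identity is that every neighbor of a vertex lies in the opposite part, whence $A\mathbf{x}=-D\mathbf{x}$ and therefore $B_\alpha\mathbf{x}=(2-3\alpha)\,D\mathbf{x}$. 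Thus $\mathbf{x}$ is a $\lambda_n(B_\alpha)$-eigenvector if and only if $(2-3\alpha)\,d_i=\lambda_n(B_\alpha)$ for every vertex $v_i$.

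With this criterion the case split is immediate. If $2-3\alpha=0$, i.e.\ $\alpha=\tfrac23$, then $B_{2/3}=\tfrac13 Q(G)$ and, $G$ being bipartite, $\lambda_n(B_{2/3})=\tfrac13 q_n(G)=0$, which matches the right-hand side $\frac{2m}{n}(2-3\cdot\tfrac23)=0$; so equality always holds. If $\alpha\ne\tfrac23$, the criterion forces all degrees equal, so $G$ is $r$-regular, and then necessarily $\lambda_n(B_\alpha)=(2-3\alpha)r$. For the converse I would use $B_\alpha=(2\alpha-1)A+(1-\alpha)rI$ on a regular graph, whose eigenvalues are $(2\alpha-1)\rho+(1-\alpha)r$ as $\rho$ runs over the adjacency spectrum; since $G$ is $r$-regular and bipartite, both $r$ and $-r$ occur, giving the two $B_\alpha$-eigenvalues $\alpha r$ and $(2-3\alpha)r$. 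For $\alpha\ge\tfrac12$ the map $\rho\mapsto(2\alpha-1)\rho+(1-\alpha)r$ is nondecreasing, so the minimum is attained at $\rho=-r$, yielding $\lambda_n(B_\alpha)=(2-3\alpha)r=\frac{2m}{n}(2-3\alpha)$ and hence equality.

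The main obstacle is the necessity of $\alpha\ge\tfrac12$ in the regular case: here $\mathbf{x}$ is \emph{always} a $B_\alpha$-eigenvector, but for $\alpha<\tfrac12$ it belongs to the eigenvalue $(2-3\alpha)r$, which is strictly larger than the genuine smallest eigenvalue $\alpha r$ (indeed $(2-3\alpha)r>\alpha r\iff \alpha<\tfrac12$). Then $\lambda_n(B_\alpha)\le \alpha r<(2-3\alpha)r$ contradicts $\lambda_n(B_\alpha)=(2-3\alpha)r$, so the Rayleigh inequality is strict and equality fails. I would present this contradiction carefully, since it is the one place where ``$\mathbf{x}$ is an eigenvector'' must be upgraded to ``$\mathbf{x}$ is an eigenvector for the \emph{smallest} eigenvalue''; combining the three cases then gives exactly: equality holds iff $\alpha=\tfrac23$, or $G$ is regular with $\alpha\ge\tfrac12$.
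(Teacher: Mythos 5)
Your proposal is correct and follows essentially the same route as the paper: the inequality comes from the Rayleigh quotient applied to the $\pm 1$ bipartition vector (the paper redoes this directly rather than citing Theorem \ref{Th4.4}, but it is the same computation), and the equality analysis likewise upgrades that vector to a $\lambda_n(B_\alpha)$-eigenvector, forcing $(2-3\alpha)d_i=\lambda_n(B_\alpha)$ for every vertex, whence $\alpha=\tfrac23$ or $G$ regular, with the regular case $\alpha<\tfrac12$ excluded exactly as you do by noting $\lambda_n(B_\alpha)=\alpha r<(2-3\alpha)r$. Your identity $B_\alpha\mathbf{x}=(2-3\alpha)D\mathbf{x}$ is just a cleaner packaging of the paper's componentwise comparison, so no substantive difference remains.
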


\begin{proof}
	Set $\lambda_n:=\lambda_n(B_\alpha)$. For any $ {\bf x}=(x_1,x_2, \dots, x_n)^t \in \mathbb{R}^n$, by Rayleigh quotient, we obtain
	$$\lambda_n\,{\bf x}^t\,{\bf x} \leq {\bf x}^t\, B_\alpha\, {\bf x}~~\mbox{ that is, }~~\lambda_n\sum\limits_{i=1}^{n}x_i^2\leq \alpha\sum\limits_{i=1}^n\,d_i\,x_i^2+(1-2\alpha)\sum\limits_{v_iv_j\in E(G), ~i<j} (x_i-x_j)^2 .$$
Let $ V(G)=V_1\cup V_2 $ be the vertex partition of $ G $ such that no two vertices of $ V_1 $ (resp $ V_2 $) are adjacent. Take $ {\bf x}=(x_1,x_2, \dots, x_n)^t $, where the component $ x_i=1 $ if $ v_i\in V_1 $, and $x_i= -1 $ otherwise. Then
\begin{equation*}
\lambda_n(B_\alpha) \leq\frac{2m}{n}\,(2-3\alpha).
\end{equation*}
The first part of the proof is done.

\vspace*{3mm}

If the equality holds in \eqref{eq6}, then $ B_\alpha\, {\bf x} =\lambda_n{\bf x}$. Suppose $ v_i \in V_1 $ and $ v_j \in V_2 $. From the $i$-th and $j$-th equation of $B_\alpha {\bf x} =\lambda_n{\bf x}$, we obtain
  $$\lambda_n =d_i(2-3\alpha)~~\mbox{ and }~~\lambda_n=d_j(2-3\alpha),~~\mbox{ that is, }~~(\,d_i-d_j\,)(2-3\alpha)=0.$$
Therefore, for the arbitrariness of $v_i$ and $v_j$, either $\alpha=\frac{2}{3}$ or $G$ is regular. If $G$ is $r$-regular and $\alpha< \frac{1}{2}$, then
  $$\lambda_n(B_\alpha)=\lambda_n\Big((1-\alpha)D+(2\alpha-1)A\Big)=(1-\alpha)\,r+(2\alpha-1)\,\rho_1=\alpha\,r<\frac{2m}{n}\,(2-3\alpha)$$
as $\rho_1=r$ ($G$ is bipartite). Hence either $\alpha=\frac{2}{3}$, or $G$ is regular with $\alpha\geq \frac{1}{2}$.

\vspace*{3mm}

Conversely, let $\alpha=\frac{2}{3}$. Then $Q(G)=3B_{\frac{2}{3}}(G)$ and hence $\lambda_n(B_{\frac{2}{3}})=\frac{1}{3}\,q_n(G)=0=\frac{2m}{n}\,(2-3\alpha)$ as $G$ is bipartite.

\vspace*{3mm}

Let $G$ be a $r$-regular bipartite graph with $\alpha\geq \frac{1}{2}$. Then $\rho_n=-r$. Since $\alpha\geq \frac{1}{2}$, we obtain
  $$\lambda_n(B_\alpha)=\lambda_n\Big((1-\alpha)D+(2\alpha-1)A\Big)=(1-\alpha)\,r+(2\alpha-1)\,\rho_n=(2-3\,\alpha)\,r=\frac{2m}{n}\,(2-3\alpha).$$
\end{proof}

In \cite{Prof.K.C.Das-1}, the authors defined a class of graphs $\Lambda$ in the following:

Let $\Lambda$ be the class of graphs $H=(V,E)$ such that $H$ is a regular $\chi$-partite graph $(\chi\geq 3)$ with $n/\chi$ vertices in every part, where $\chi |n$, and every vertex has $\frac{d}{\chi-1}$ adjacent vertices in every other part ($d$ is the degree of each vertex in $H$).

\begin{theorem} \label{Th4.7}
	Let $ G $ be a graph with chromatic number $\chi$ such that
\begin{equation}\label{eq7}
		\lambda_n\,(B_\alpha)\,=\, \frac{2\,m}{n}\,\left(\frac{\chi\,(1-\alpha)-\alpha}{\chi-1}\right).
\end{equation}
	where $0\leq \alpha\leq 1$. Then
	\begin{itemize}
		\item[$(1)$] for $\alpha=\frac{1}{2}$, $G$ is regular,
        \item[$(2)$] for $\chi=2$, $G$ is bipartite with $\alpha=\frac{2}{3}$, or $G$ is regular bipartite with $\alpha\geq \frac{1}{2}$.
		\item [$(3)$] for $\alpha\in [0,1]\,\Big(\alpha \neq \, \Big\{\frac{1}{2},\, \frac{\chi}{\chi+1}\Big\},\,\chi\geq 3\Big)$, $G \in \Lambda$.
	\end{itemize}
\end{theorem}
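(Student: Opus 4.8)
The plan is to revisit the proof of Theorem \ref{Th4.4} and read off its equality conditions. Recall that the bound there was obtained by summing, over the $\chi$ colour classes $V_1,\dots,V_\chi$ of a proper $\chi$-colouring, the Rayleigh inequalities $\lambda_n(B_\alpha)\,\|\mathbf{x}^{(j)}\|^2\le \langle B_\alpha\mathbf{x}^{(j)},\mathbf{x}^{(j)}\rangle$, where $\mathbf{x}^{(j)}$ takes the value $\chi-1$ on $V_j$ and $-1$ elsewhere. Since every summand satisfies $\le$, equality in \eqref{eq7} forces equality in each of these $\chi$ inequalities; by the Rayleigh characterisation of $\lambda_n(B_\alpha)$ this means each $\mathbf{x}^{(j)}$ is an eigenvector of $B_\alpha=(2\alpha-1)A+(1-\alpha)D$ for $\lambda_n:=\lambda_n(B_\alpha)$. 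First I would write out the two scalar equations coming from $B_\alpha\mathbf{x}^{(j)}=\lambda_n\mathbf{x}^{(j)}$: for a vertex $v_i\in V_j$ (all of whose neighbours lie outside $V_j$) this gives
$$d_i\big(\chi(1-\alpha)-\alpha\big)=\lambda_n(\chi-1),$$
and for $v_i\notin V_j$, writing $n_{ij}$ for the number of neighbours of $v_i$ in $V_j$,
$$(2\alpha-1)\big(\chi\, n_{ij}-d_i\big)-(1-\alpha)d_i=-\lambda_n.$$

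Next I would exploit the first equation. Put $C:=\chi(1-\alpha)-\alpha$; since equality holds we also have $\lambda_n(\chi-1)=\tfrac{2m}{n}C$, so the first equation reads $d_i\,C=\tfrac{2m}{n}\,C$ for every $v_i\in V_j$, and hence for every vertex of $G$. For part $(1)$, $\alpha=\tfrac12$ gives $C=\tfrac{\chi-1}{2}\ne 0$, so $d_i=\tfrac{2m}{n}$ for all $i$ and $G$ is regular. For part $(3)$ the hypothesis $\alpha\ne\tfrac{\chi}{\chi+1}$ is exactly the statement $C\ne 0$, so again every $d_i$ equals the common value $d:=\tfrac{2m}{n}$ and $G$ is $d$-regular. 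Part $(2)$ is simply the case $\chi=2$: the bound \eqref{eq5} specialises to \eqref{eq6} when $\chi=2$, so the claimed characterisation is already Theorem \ref{Th4.6} and nothing new is needed there.

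It then remains, for part $(3)$, to upgrade ``$G$ is $d$-regular'' to ``$G\in\Lambda$''. Substituting $d_i=d$ into the second scalar equation and using $\lambda_n=\tfrac{d\,C}{\chi-1}$, a short simplification collapses its right-hand side to $\tfrac{d\,\chi(2\alpha-1)}{\chi-1}$, yielding $(2\alpha-1)\,\chi\, n_{ij}=\tfrac{d\,\chi(2\alpha-1)}{\chi-1}$. Here the remaining hypothesis $\alpha\ne\tfrac12$ lets me cancel $2\alpha-1\ne 0$, so that $n_{ij}=\tfrac{d}{\chi-1}$ for every vertex $v_i$ and every colour class $V_j$ not containing it; that is, each vertex has exactly $\tfrac{d}{\chi-1}$ neighbours in every other part. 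Finally, counting the edges between two parts $V_j$ and $V_k$ from both sides gives $|V_j|\,\tfrac{d}{\chi-1}=|V_k|\,\tfrac{d}{\chi-1}$, whence all parts share the common size $n/\chi$ (so $\chi\mid n$); together with $d$-regularity and the uniform-neighbour condition this is precisely the definition of $\Lambda$.

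The routine but delicate part is the algebraic bookkeeping: checking that the degree-coefficient in the first equation simplifies to $C=\chi(1-\alpha)-\alpha$ and that the right-hand side of the second equation collapses to a clean multiple of $(2\alpha-1)$, since a sign slip there would spoil both the regularity deduction and the final cancellation. The only genuinely conceptual step is the opening observation that equality in a sum of $\le$-inequalities forces termwise equality, which is what licenses treating each $\mathbf{x}^{(j)}$ as an exact eigenvector of $\lambda_n$; after that, every structural conclusion is forced.
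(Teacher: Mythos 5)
Your proposal is correct, and its core is the same as the paper's: equality in \eqref{eq7} forces each colouring vector $\mathbf{x}^{(j)}$ to be a $\lambda_n$-eigenvector of $B_\alpha$ (since the bound in Theorem \ref{Th4.4} is a sum of Rayleigh inequalities), after which the structure of $G$ is read off from the scalar eigenvector equations; your two scalar equations and the cancellation of $2\alpha-1$ match the paper's Claims 1 and 3. The execution differs in three spots, all in your favour or neutral. First, you handle part $(1)$ with the same eigenvector machinery, whereas the paper argues directly from $B_{1/2}=\tfrac{1}{2}D$ and the degree-sum identity $n\delta\le 2m=n\delta$; both are fine. Second, and most notably, you obtain the equal part sizes $|V_j|=n/\chi$ by double-counting the edges between two colour classes after establishing the uniform-neighbour condition $n_{ij}=\tfrac{d}{\chi-1}$, whereas the paper gets it from orthogonality $\mathbf{1}\perp\mathbf{x}^{(j)}$; the paper's orthogonality step silently requires $\lambda_n(B_\alpha)\neq\alpha d$ (the eigenvalue of $\mathbf{1}$), which indeed follows from $\alpha\neq\tfrac12$ but is never checked there — your counting argument sidesteps this entirely (just note $d>0$, which holds since $\chi\ge 3$). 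Third, for the uniform-neighbour condition you solve a single equation using regularity and the known value $\lambda_n=\tfrac{dC}{\chi-1}$, while the paper subtracts pairs of the $\chi-1$ equations; again equivalent, with the same use of $\alpha\neq\tfrac12$. So: same key idea and decomposition, with bookkeeping that is slightly cleaner at the one point where the paper is least careful.
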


\begin{proof} (1) Suppose that $\alpha=\frac{1}{2}$. Then by Remark 1.1 and (\ref{eq7}), we obtain $\frac{\delta}{2}=\lambda_n(B_{\frac{1}{2}})=\frac{m}{n}$, that is, $2m=n\,\delta$, that is, $n\delta\leq \sum\limits^n_{i=1}\,d_i=2m=n\delta$, that is, $\sum\limits^n_{i=1}\,d_i=n\delta$, that is, $G$ is regular.
	
	\vspace*{4mm}

\noindent
(2) Suppose $ \chi=2$. Then $\lambda_n(B_\alpha)=\frac{2m}{n}\,(2-3\alpha)$. By Theorem \ref{Th4.6}, $G$ is bipartite with $\alpha=\frac{2}{3}$, or $G$ is regular bipartite with $\alpha\geq \frac{1}{2}$.
	
	\vspace*{2mm}

\noindent
(3) We assume that $\alpha\in [0,1]$ and $\alpha \neq \Big\{\frac{1}{2},\, \frac{\chi}{\chi+1}\Big\}$ with $\chi\geq 3$.

Set $ \lambda_n:=\lambda_n(B_\alpha) $. Let us partition the vertex set $ V(G) $ into $ \chi $ number of color classes $ V_1, V_2, \dots, V_\chi $. For $ j\in \{ 1,2 \dots, \chi\} $, define $ {\bf x^{(j)}}:=(x^j_1,\,x^j_2, \ldots,\,x^j_n) $ as follows:
	
	\begin{center}
		$x^j_i = \left\{ \begin{array}{cc}
			\chi-1 & \mbox{for $v_i \in V_j$,} \\[3mm]
			-1 & \mbox{otherwise.}
		\end{array}\right.$
	\end{center}	
Then by Theorem \ref{Th4.4}, $\lambda_n\,(B_\alpha)\,\leq\, \frac{2\,m}{n}\,\left(\frac{\chi\,(1-\alpha)-\alpha}{\chi-1}\right) $. Since equality occurs in the above inequality, so by Rayleigh quotient and Theorem \ref{Th4.4}, $ {\bf x^{(1)}}, \dots, {\bf x^{(\chi)}} $	 are all eigenvectors of $ B_\alpha $ corresponding to the eigenvalue $ \lambda_n$.
	
	\vspace*{3mm}

\noindent
${\bf Claim\,1:}$ $ G $ is regular.\\
	Let $ 1\leq k \ne \ell\leq \chi $. Suppose $ v_s \in V_k $ and $ v_t \in V_{\ell} $. Comparing the $ s $-th components of the matrix equation $ B_\alpha {\bf x^{(k)}}=\lambda_n {\bf x^{(k)}}$, we obtain $ \lambda_n(\chi-1)=d_s\,(\chi-\alpha\chi-\alpha) $. Similarly, comparing the $ t $-th components of the matrix equation  $ B_\alpha {\bf x}^{(\ell)}=\lambda_n {\bf x}^{(\ell)}$, we have $ \lambda_n(\chi-1)=d_t\,(\chi-\alpha\chi-\alpha) $. Then $ (d_s-d_t)(\chi-\alpha\chi-\alpha)=0.$ Since $ \alpha \ne \frac{\chi}{\chi+1} $ and $ v_s $, $ v_t $ are arbitrary, so $ G $ is regular.
	
	\vspace*{3mm}
	
\noindent
${\bf Claim\,2:}$ $ |V_1|=\cdots=|V_\chi|=\frac{n}{\chi}$, where $ \chi|n $. \\
	By Claim 1, $ G $ is regular, so $ \boldsymbol{1}:=(1,1, \dots, 1)^t $ is an eigenvector of $ B_\alpha $. Also, $B_\alpha$ is symmetric, so $\boldsymbol{1} \perp x^{(k)}$ for $k=1,2, \dots, \chi$. Therefore, $|V_k|(\chi-1)+(-1)(n-|V_k|)=0$. That is, $|V_k|=\frac{n}{\chi}$, for $k=1,2, \dots, \chi$.

\vspace*{3mm}
	
\noindent
${\bf Claim\,3:}$ Every vertex is adjacent to $ \frac{d}{\chi-1} $ vertices in every other part, where $ d $ is the regularity of $ G $.\\
	Suppose $v_s\in V_1$ and it is adjacent with $r_2,\,r_3,\ldots,\,r_\chi$ number of vertices in the partitions $V_2,\,V_3,\ldots,\,V_\chi $, respectively. For $v_s\in V_1$, from $B_\alpha {\bf x^{(k)}}=\lambda_n\,{\bf x^{(k)}}$, we obtain
  $$(-1)\lambda_n=(1-\alpha)(-1)d_s+(2\alpha-1)\,\left( \sum\limits^{\chi}_{i=2,i\neq k}\,(-1)r_i+(\chi-1)r_k\right),$$
where $2\leq k\leq \chi$. From this we have the following $\chi-1$ equations:
\begin{align*}
		(-1)\lambda_n&=(1-\alpha)(-1)d_s+(2\alpha-1)\Big( (\chi-1)r_2+(-1)r_3+\cdots+(-1)r_\chi\Big)\\
		(-1)\lambda_n&=(1-\alpha)(-1)d_s+(2\alpha-1)\Big( (-1)r_2+(\chi-1)r_3+\cdots+(-1)r_\chi\Big)\\
		\cdots & \cdots \cdots\\
		\cdots & \cdots \cdots\\
		(-1)\lambda_n&=(1-\alpha)(-1)d_s+(2\alpha-1)\Big( (-1)r_2+(-1)r_3+\cdots+(\chi-1)r_\chi\Big). 	
\end{align*}
	Since $ \alpha \ne \frac{1}{2} $, so from the above, we have $ r_2=r_3=\cdots=r_\chi= \frac{d}{\chi-1}$ as $G$ is regular by ${\bf Claim\,1}$. Also $ v_s $ is arbitrary, therefore the ${\bf Claim\,3}$ is done.

\vspace*{3mm}

Hence $G\in \Lambda$.	
\end{proof}

In the next result, we partially obtain the converse of the Theorem \ref{Th4.7}. One can verify that if $ \alpha=\frac{1}{2} $ and $ G $ is regular, then the equality \eqref{eq7} holds. Moreover, the equality \eqref{eq7} holds for any bipartite graph with $\alpha=\frac{2}{3}$, or any regular bipartite graph with $\alpha\geq \frac{1}{2}$. Therefore, we consider the remaining converse part of the Theorem \ref{Th4.7}. Since the proof technique of the following result is similar to \cite[Theorem 5.1]{Prof.K.C.Das-1}, so we skip it for the readers.
\begin{theorem}
	If $ G \in \Lambda $ and $\alpha\,(\ne \frac{1}{2}) \in [0,1]$, then $\displaystyle{\frac{2m}{n}\left(\frac{\chi(1-\alpha)-\alpha}{\chi-1}\right)}$ is an eigenvalue of $ B_\alpha(G) $ with multiplicity $ \chi-1 $.
\end{theorem}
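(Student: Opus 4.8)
\emph{The plan.} Exploit regularity to pass to the adjacency matrix, then reuse the vectors from Theorem \ref{Th4.7}. Since $G\in\Lambda$ is $d$-regular, $D=dI$ and $2m=nd$, so $\frac{2m}{n}=d$ and $B_\alpha=(2\alpha-1)A+(1-\alpha)dI$. As $\alpha\neq\frac12$, the affine map $\rho\mapsto(2\alpha-1)\rho+(1-\alpha)d$ is a bijection sending each eigenpair $(\rho,\mathbf v)$ of $A$ to the eigenpair $\bigl((2\alpha-1)\rho+(1-\alpha)d,\mathbf v\bigr)$ of $B_\alpha$ with the eigenspaces, and hence the multiplicities, unchanged. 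It therefore suffices to locate a single eigenvalue of $A$ together with its eigenspace.

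First I would take, for $j\in\{1,\dots,\chi\}$, the vectors $\mathbf x^{(j)}$ of Theorem \ref{Th4.7}, with $x^{j}_i=\chi-1$ for $v_i\in V_j$ and $x^{j}_i=-1$ otherwise, and compute $A\mathbf x^{(j)}$ using that each vertex has exactly $\tfrac{d}{\chi-1}$ neighbours in every other part. For $v_s\in V_j$ all $d$ neighbours lie outside $V_j$, giving entry $-d$; for $v_s\notin V_j$ the entry is $\tfrac{d}{\chi-1}(\chi-1)+\tfrac{d}{\chi-1}(\chi-2)(-1)=\tfrac{d}{\chi-1}$. Since $-d=-\tfrac{d}{\chi-1}(\chi-1)$ and $\tfrac{d}{\chi-1}=-\tfrac{d}{\chi-1}(-1)$, this yields $A\mathbf x^{(j)}=-\tfrac{d}{\chi-1}\mathbf x^{(j)}$, so each $\mathbf x^{(j)}$ is an eigenvector of $A$ for $\rho=-\tfrac{d}{\chi-1}$. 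The affine map then produces the $B_\alpha$-eigenvalue
\[
(2\alpha-1)\Bigl(-\tfrac{d}{\chi-1}\Bigr)+(1-\alpha)d=\frac{d\bigl(\chi(1-\alpha)-\alpha\bigr)}{\chi-1}=\frac{2m}{n}\cdot\frac{\chi(1-\alpha)-\alpha}{\chi-1},
\]
exactly the asserted value.

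Next I would determine the dimension of $\operatorname{span}\{\mathbf x^{(1)},\dots,\mathbf x^{(\chi)}\}$. Every $\mathbf x^{(j)}$ is constant on each part, hence lies in the $\chi$-dimensional space of part-constant vectors, and the $\chi\times\chi$ matrix recording their values on the parts is $\chi I_\chi-J_\chi$, of rank $\chi-1$; its kernel is spanned by the all-ones vector, which records the unique relation $\sum_{j=1}^{\chi}\mathbf x^{(j)}=\mathbf 0$. Thus the $\mathbf x^{(j)}$ span a $(\chi-1)$-dimensional eigenspace of $A$ for $\rho=-\tfrac{d}{\chi-1}$, and the corresponding eigenvalue of $B_\alpha$ has multiplicity at least $\chi-1$.

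The delicate step---where I expect the real work---is the reverse bound, namely that the multiplicity is \emph{exactly} $\chi-1$, i.e.\ that $\rho=-\tfrac{d}{\chi-1}$ contributes no eigenvector outside the part-constant subspace. I would settle this as in \cite[Theorem 5.1]{Prof.K.C.Das-1}: the partition $\{V_1,\dots,V_\chi\}$ is equitable with quotient matrix $\tfrac{d}{\chi-1}(J_\chi-I_\chi)$, and a short orthogonality/interlacing argument on the complementary space of vectors summing to zero on each part excludes any further occurrence of this eigenvalue. This bookkeeping is the only ingredient beyond the explicit construction above, and it is the part most in need of care.
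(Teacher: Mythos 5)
The paper contains no proof of this theorem --- it is explicitly skipped, with a pointer to \cite[Theorem 5.1]{Prof.K.C.Das-1} --- so your proposal stands on its own. Your first two steps are correct and are surely the intended argument: since $G\in\Lambda$ is $d$-regular, $B_\alpha=(2\alpha-1)A+(1-\alpha)dI$ with $\frac{2m}{n}=d$, the vectors $\mathbf{x}^{(j)}$ of Theorem \ref{Th4.7} satisfy $A\mathbf{x}^{(j)}=-\frac{d}{\chi-1}\,\mathbf{x}^{(j)}$, and they span a $(\chi-1)$-dimensional space (the rank of $\chi I_\chi-J_\chi$), so the stated value is an eigenvalue of $B_\alpha(G)$ with at least $\chi-1$ independent eigenvectors.

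The genuine problem is your last step: the ``reverse bound'' you defer is not merely delicate, it is false, so no equitable-partition or interlacing argument can deliver it. Nothing in the definition of $\Lambda$ prevents the adjacency eigenvalue $-\frac{d}{\chi-1}$ from also occurring outside the part-constant subspace. Concretely, $2K_3\in\Lambda$ ($\chi=3$, $d=2$, parts $\{a_i,b_i\}$ pairing the two triangles) has adjacency spectrum $\{2^{(2)},(-1)^{(4)}\}$, so $-\frac{d}{\chi-1}=-1$ has multiplicity $4>\chi-1=2$; if you want a connected example, the tensor product $K_3\times K_3\in\Lambda$ ($\chi=3$, $d=4$) has spectrum $\{4,(-2)^{(4)},1^{(4)}\}$, and again $-\frac{d}{\chi-1}=-2$ has multiplicity $4$. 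In that example the quotient matrix $\frac{d}{\chi-1}(J_\chi-I_\chi)$ of the equitable partition accounts for only two of the four occurrences of $-2$; the other two eigenvectors lie in the complementary space of vectors summing to zero on each part, which is exactly what your proposed orthogonality argument would need to rule out and cannot. The conclusion is that the theorem can only be read (and can only be true) in the sense your construction already establishes: the stated value is an eigenvalue whose eigenspace has dimension at least $\chi-1$. Drop the final paragraph and state the multiplicity claim as ``at least $\chi-1$''; as written, your plan commits you to proving a false statement.
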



The next result is known (see to \cite{Signless-Chromatic}); however, it can be deduced from Theorem \ref{Th4.4} by taking $ \alpha=\frac{2}{3}$.
\begin{corollary}{\rm \cite{Signless-Chromatic}}
	Let $ G $ be graph of order $ n $ with $ m $ edges and chromatic number $ \chi $. Then $$ q_n(G)\leq \frac{2m}{n}\left(\frac{\chi-2}{\chi-1}\right).$$
\end{corollary}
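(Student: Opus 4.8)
The plan is to obtain this as the special case $\alpha=\tfrac{2}{3}$ of Theorem~\ref{Th4.4}, exploiting the proportionality $Q(G)=3\,B_{2/3}(G)$ recorded in Remark~1.1. The first step is to translate between the least eigenvalue of $Q(G)$ and that of $B_{2/3}(G)$: since $B_{2/3}(G)=\tfrac{1}{3}Q(G)$ and multiplication by the positive scalar $\tfrac{1}{3}$ scales all eigenvalues by $\tfrac{1}{3}$ while preserving their ordering from largest to smallest, the smallest eigenvalues are related by $\lambda_n(B_{2/3})=\tfrac{1}{3}\,q_n(G)$.

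Next I would specialize the bound of Theorem~\ref{Th4.4} to $\alpha=\tfrac{2}{3}$. Substituting into the factor $\tfrac{\chi(1-\alpha)-\alpha}{\chi-1}$ gives
\[
\frac{\chi(1-\alpha)-\alpha}{\chi-1}\bigg|_{\alpha=2/3}
=\frac{\tfrac{\chi}{3}-\tfrac{2}{3}}{\chi-1}
=\frac{\chi-2}{3(\chi-1)},
\]
so Theorem~\ref{Th4.4} yields $\lambda_n(B_{2/3})\le \tfrac{2m}{n}\cdot\tfrac{\chi-2}{3(\chi-1)}$. Combining this with the identity $\lambda_n(B_{2/3})=\tfrac{1}{3}q_n(G)$ and clearing the common factor $\tfrac{1}{3}$ gives exactly $q_n(G)\le \tfrac{2m}{n}\cdot\tfrac{\chi-2}{\chi-1}$, as claimed.

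There is essentially no obstacle here: the entire argument is a one-line substitution once the scaling identity $\lambda_n(B_{2/3})=\tfrac{1}{3}q_n(G)$ is in hand. The only points deserving a moment's care are that Theorem~\ref{Th4.4} is stated under the hypothesis $m>0$ (so that $\chi\ge 2$ and the denominator $\chi-1$ is nonzero), and that scaling by a \emph{positive} constant preserves the indexing of eigenvalues, so that the smallest eigenvalue of $Q$ corresponds to the smallest eigenvalue of $B_{2/3}$ rather than being flipped to the largest. Both are immediate, so the proof reduces to the displayed computation above.
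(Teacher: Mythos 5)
Your proof is correct and is exactly the paper's intended argument: the paper derives this corollary by setting $\alpha=\tfrac{2}{3}$ in Theorem~\ref{Th4.4} and using the proportionality $Q(G)=3B_{2/3}(G)$, which is precisely the substitution and scaling you carried out. Your added remarks about $m>0$ (hence $\chi\ge 2$) and positive scaling preserving the eigenvalue ordering are sound and fill in the only details the paper leaves implicit.
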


\begin{corollary}
	Let $ G $ be graph of order $ n $ with $ m $ edges and chromatic number $ \chi $ such that \begin{equation*}
	 q_n(G)=
	 \frac{2m}{n}\left(\frac{\chi-2}{\chi-1}\right).
	\end{equation*}
	Then $ G $ is either bipartite or $ G \in \Lambda $.
\end{corollary}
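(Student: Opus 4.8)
The plan is to recognize this corollary as precisely the $\alpha=\tfrac{2}{3}$ specialization of Theorem~\ref{Th4.7}, and then to invoke that theorem directly. First I would translate the signless-Laplacian equality into the $B_\alpha$ language using the relation $Q(G)=3B_{2/3}(G)$, which gives $\lambda_n(B_{2/3})=\tfrac{1}{3}q_n(G)$. Substituting $\alpha=\tfrac{2}{3}$ into the right-hand side of the bound from Theorem~\ref{Th4.4}, one computes
$$\frac{\chi(1-\alpha)-\alpha}{\chi-1}=\frac{\chi/3-2/3}{\chi-1}=\frac{\chi-2}{3(\chi-1)},$$
so the hypothesized equality $q_n(G)=\tfrac{2m}{n}\cdot\tfrac{\chi-2}{\chi-1}$ is exactly the equality $\lambda_n(B_{2/3})=\tfrac{2m}{n}\cdot\tfrac{\chi(1-\alpha)-\alpha}{\chi-1}$ required in \eqref{eq7} with $\alpha=\tfrac{2}{3}$. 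Thus the full structural hypothesis of Theorem~\ref{Th4.7} is satisfied, and no new computation is needed beyond this identification.

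Next I would split into cases on the chromatic number. Since $m>0$, we have $\chi\geq 2$. If $\chi=2$, then part~(2) of Theorem~\ref{Th4.7} applies: $G$ is bipartite with $\alpha=\tfrac{2}{3}$, or $G$ is regular bipartite with $\alpha\geq\tfrac{1}{2}$; in either subcase $G$ is bipartite. If $\chi\geq 3$, I would apply part~(3) of Theorem~\ref{Th4.7}, which requires checking that $\alpha=\tfrac{2}{3}$ is not one of the excluded values $\{\tfrac{1}{2},\,\tfrac{\chi}{\chi+1}\}$. Clearly $\tfrac{2}{3}\neq\tfrac{1}{2}$, and the equation $\tfrac{\chi}{\chi+1}=\tfrac{2}{3}$ forces $3\chi=2\chi+2$, i.e. $\chi=2$, which is excluded under the present assumption $\chi\geq 3$; hence $\tfrac{2}{3}\neq\tfrac{\chi}{\chi+1}$ for every $\chi\geq 3$. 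Consequently part~(3) yields $G\in\Lambda$.

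Combining the two cases gives the claim: $G$ is bipartite when $\chi=2$ and $G\in\Lambda$ when $\chi\geq 3$, so $G$ is either bipartite or in $\Lambda$. There is no genuine analytic obstacle here, since all of the structural work has already been carried out in Theorem~\ref{Th4.7}; the only point requiring care is the arithmetic verification that the specific value $\alpha=\tfrac{2}{3}$ avoids the forbidden value $\tfrac{\chi}{\chi+1}$ exactly when $\chi\geq 3$, which dovetails cleanly with the separate bipartite treatment of the boundary case $\chi=2$.
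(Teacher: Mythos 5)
Your proposal is correct and follows exactly the paper's route: the paper's own proof is the one-line statement that the result follows from Theorem~\ref{Th4.7}, and your argument simply makes explicit the identification $q_n(G)=3\lambda_n(B_{2/3})$, the case split on $\chi$, and the arithmetic check that $\alpha=\tfrac{2}{3}$ avoids the excluded values $\tfrac{1}{2}$ and $\tfrac{\chi}{\chi+1}$ when $\chi\geq 3$. Your write-up is in fact more careful than the paper's, but it is the same proof.
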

\begin{proof}
	Proof follows from Theorem \ref{Th4.7}.
\end{proof}

As a consequence of Theorem \ref{Th4.4}, a lower bound of the chromatic number of $ G $ is deduced.
\begin{corollary}
	If $ G $ is a graph with chromatic number $ \chi $ and $ \beta_o:=\beta_o(G) $, then $$ \chi\geq \frac{\beta_o}{1-\beta_o}.$$
\end{corollary}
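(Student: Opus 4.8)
The plan is to apply Theorem \ref{Th4.4} at the single value $\alpha = \beta_o$, where the smallest eigenvalue of $B_{\beta_o}$ vanishes by definition, and then simply rearrange the resulting inequality to isolate $\chi$.

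First I would observe that in order for $\beta_o(G)$ to be defined, $G$ must contain at least one edge; hence $m > 0$ and $\chi \geq 2$, so in particular $\chi - 1 > 0$. With the hypotheses of Theorem \ref{Th4.4} thereby satisfied, substituting $\alpha = \beta_o$ into its bound gives
$$\lambda_n(B_{\beta_o}) \leq \frac{2m}{n}\left(\frac{\chi\,(1-\beta_o) - \beta_o}{\chi - 1}\right).$$
By the very definition of $\beta_o$ (the largest $\beta \in (0,1)$ with $\lambda_n(B_\beta) = 0$), the left-hand side equals $0$, so we are left with $0 \leq \frac{2m}{n}\left(\frac{\chi(1-\beta_o) - \beta_o}{\chi - 1}\right)$.

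The remaining step is purely algebraic. Since the prefactor $\frac{2m}{n(\chi-1)}$ is strictly positive, the inequality forces $\chi\,(1-\beta_o) - \beta_o \geq 0$, that is, $\chi\,(1-\beta_o) \geq \beta_o$. Because $\beta_o \in (0,1)$ we have $1 - \beta_o > 0$, and dividing through by $1 - \beta_o$ yields exactly $\chi \geq \frac{\beta_o}{1-\beta_o}$, as claimed.

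I do not expect a genuine obstacle here: the result is a direct specialization of Theorem \ref{Th4.4} combined with the defining property $\lambda_n(B_{\beta_o}) = 0$. The only points demanding care are verifying that $G$ has an edge so that $m > 0$ and $\beta_o$ is well-defined, and tracking the sign of the positive prefactor $\frac{2m}{n(\chi-1)}$ so that the inequality is not inadvertently reversed when clearing denominators.
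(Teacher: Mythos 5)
Your proposal is correct and follows exactly the route the paper intends: the corollary is stated there as an immediate consequence of Theorem \ref{Th4.4}, obtained by setting $\alpha=\beta_o$, using $\lambda_n(B_{\beta_o})=0$, and rearranging with the positive prefactor $\frac{2m}{n(\chi-1)}$ and $1-\beta_o>0$. Your added care about $m>0$ (so that $\beta_o$ is defined and $\chi\geq 2$) is exactly the right bookkeeping.
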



\section{On the determinant}\label{sec3}
In this short section, we present the determinant and the Sachs-type formula for the coefficients of the characteristic polynomial of $ B_\alpha(G) $. Then, we obtain some known results as a consequence.

A spanning elementary subgraph $ H $ of a graph $ G $ is a spanning subgraph of $ G $ such that each component of $ H $ is either a cycle or an edge. For a spanning elementary subgraph $ H $, $ p(H) $ and $ c(H) $ denote the number of components and the number of cycles in $ H $, respectively. Now, we present the well known Harary's formula \cite{Harary} for the determinant of the adjacency matrix of a graph.

\begin{prop}\label{Prop5.1}
	Let $G$ be a graph with $V(G)=\{v_1,v_2,\ldots,v_n\}$. Also let $ A(G) $ be the adjacency matrix of $ G $. Then,
	\begin{align*}
		\det\,(A(G))=\sum\limits_{H}\,(-1)^{\,n-p(H)}\,2^{\,c(H)},
	\end{align*}
	where summation is over all spanning elementary subgraphs $ H $ of $ G $.
\end{prop}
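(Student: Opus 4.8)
The plan is to expand $\det(A(G))$ via the Leibniz permutation formula and then organize the resulting sum according to the cycle structure of the permutations. Writing $A=(a_{ij})$, we have
$$\det(A)=\sum_{\sigma\in S_n}\operatorname{sgn}(\sigma)\prod_{i=1}^n a_{i,\sigma(i)},$$
and the first observation is that a term survives only when $a_{i,\sigma(i)}=1$ for every $i$, that is, when $v_i\sim v_{\sigma(i)}$ for all $i$. Since the diagonal of $A$ vanishes, such a $\sigma$ has no fixed points, so each cycle of $\sigma$ has length at least $2$.

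Next I would interpret the cycle decomposition of a surviving $\sigma$ combinatorially. A $2$-cycle $(i\,j)$ forces $a_{ij}=a_{ji}=1$, which is exactly the presence of the edge $v_iv_j$; a cycle $(i_1\,i_2\,\cdots\,i_k)$ of length $k\geq 3$ forces the closed walk $v_{i_1}v_{i_2}\cdots v_{i_k}v_{i_1}$ to lie in $G$, i.e.\ a copy of $C_k$. Hence each surviving $\sigma$ determines a partition of $V(G)$ into edges and cycles of $G$ --- precisely a spanning elementary subgraph $H$ --- with the cycles of $\sigma$ in bijection with the components of $H$, so the number of cycles of $\sigma$ equals $p(H)$.

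The key counting step is the passage from $H$ back to permutations. An edge (a $2$-cycle) corresponds to a unique transposition, but a cycle $C_k$ with $k\geq 3$ can be traversed in two directions, yielding two distinct $k$-cycles; thus exactly $2^{c(H)}$ permutations $\sigma$ give rise to the same $H$. For the sign, a cycle of length $\ell$ contributes $(-1)^{\ell-1}$, so if $\sigma$ has cycles of lengths $\ell_1,\dots,\ell_{p(H)}$ partitioning the $n$ vertices, then $\operatorname{sgn}(\sigma)=(-1)^{\sum_t(\ell_t-1)}=(-1)^{n-p(H)}$; crucially this depends only on the cycle lengths and not on their orientation, so all $2^{c(H)}$ permutations attached to $H$ share the same sign. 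Summing over $H$ and collecting the equal contributions yields $\det(A(G))=\sum_H(-1)^{n-p(H)}2^{c(H)}$.

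I expect the main delicacy to be the bookkeeping in this last step: ensuring that a transposition is counted with multiplicity one while each genuine cycle is counted with multiplicity two, and verifying that reversing the orientation of a cycle leaves $\operatorname{sgn}(\sigma)$ unchanged, so that the factor $2^{c(H)}$ can be pulled out cleanly without disturbing the sign.
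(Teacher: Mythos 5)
Your proof is correct and follows essentially the same route as the paper: the paper states Proposition~\ref{Prop5.1} as Harary's known formula and recovers it as the $\alpha=1$ specialization of Theorem~\ref{th3.1}, whose proof is precisely the Leibniz-expansion/cycle-decomposition bookkeeping you carry out (there with nonzero diagonal entries allowed, hence ``modified'' elementary subgraphs). Your direct argument for $A(G)$ — surviving permutations have no fixed points since the diagonal vanishes, each graph cycle of length at least $3$ corresponds to exactly two permutation cycles while an edge corresponds to one transposition, and $\operatorname{sgn}(\sigma)=(-1)^{n-p(H)}$ is independent of orientation — is exactly the specialization of that argument and is sound.
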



Motivated by the notion of spanning elementary subgraphs and for the purpose of the main result in this section, we define the following.

\begin{definition}{Modified Elementary Subgraph:} A subgraph $H$ of a graph $G$ is called a modified elementary subgraph if each component of $H$ is either a vertex, an edge, or a cycle.
\end{definition}

Let $H$ be a modified elementary subgraph. Denote by $c(H), c_1(H),$ and  $c_2(H)$ the number of components in a subgraph $ H $ which are cycles, edges, and vertices, respectively. Let $ p(H):=c(H)+c_1(H)+c_2(H) $ be the number of components in $ H $. Also, let $ \mathcal{C}_2(H) $ be the collection of isolated vertices in $ H $. In the following result, we present a Harary-type formula \cite{Harary} for the determinant of $ B_\alpha $-matrices. For a graph $ G $, since $ \det(L(G))=\det(B_0(G))=0 $, so we derive a formula of $ \det(B_\alpha(G)) $ for $ \alpha \in (0,1] $.

\begin{theorem}\label{th3.1}
	Let $G$ be a graph with $V(G)=\{v_1,v_2,\ldots,v_n\}$. Then, for any $ \alpha \in (0,1] $,
	\begin{equation}\label{det}
		\det\,(B_{\alpha}(G))=\sum\limits_{H}\,(-1)^{\,n-p(H)}\,2^{\,c(H)}\,(1-\alpha)^{\,c_2(H)}\,(2\alpha-1)^{\,n-c_2(H)}\left(\prod_{v_i\in \mathcal{C}_2(H)}d_G(v_i)\right),
	\end{equation}
	where summation is over all spanning modified elementary subgraphs $ H $ of $ G $.
\end{theorem}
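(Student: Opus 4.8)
The plan is to expand the determinant $\det(B_\alpha(G))$ directly from its definition via the Leibniz permutation sum, and then to group the contributing permutations into combinatorial pieces that assemble into spanning modified elementary subgraphs. Writing $B_\alpha=(2\alpha-1)A+(1-\alpha)D$, the $(i,j)$-entry is $(1-\alpha)d_G(v_i)$ on the diagonal (a nonzero scalar times the degree, since $\alpha>0$ gives $1-\alpha<1$ but we only need $i=j$ to contribute the diagonal term) and $(2\alpha-1)$ off the diagonal whenever $v_i\sim v_j$, and $0$ otherwise. In the Leibniz expansion $\det(B_\alpha)=\sum_{\sigma\in S_n}\operatorname{sgn}(\sigma)\prod_{i=1}^n (B_\alpha)_{i\,\sigma(i)}$, a permutation $\sigma$ contributes a nonzero term precisely when each factor is nonzero, i.e. for every $i$ either $\sigma(i)=i$ (diagonal, contributing $(1-\alpha)d_G(v_i)$) or $v_i\sim v_{\sigma(i)}$ (off-diagonal, contributing $(2\alpha-1)$). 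The first step is therefore to set up this sum and record exactly which factor each $i$ contributes.

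Next I would translate the cycle structure of each contributing $\sigma$ into a spanning modified elementary subgraph $H$. Decomposing $\sigma$ into disjoint cycles, the fixed points correspond to isolated vertices of $H$ (the set $\mathcal{C}_2(H)$), each contributing a factor $(1-\alpha)d_G(v_i)$; a transposition $(i\,j)$ with $v_i\sim v_j$ corresponds to an edge of $H$ and, because both $(B_\alpha)_{ij}$ and $(B_\alpha)_{ji}$ appear, contributes $(2\alpha-1)^2$ with an overall sign $-1=(-1)^{2-1}$ absorbed into the sign bookkeeping; and a cycle of length $\ell\ge 3$ corresponds to an $\ell$-cycle in $G$ (hence in $H$), contributing $(2\alpha-1)^\ell$. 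The key observation is that a cycle of length $\ell\ge 3$ can be traversed in two directions, and both orientations $\sigma$ and $\sigma^{-1}$ give equal contributions, producing the factor $2^{c(H)}$; a $2$-cycle admits only one realization as an edge. Collecting the off-diagonal factors shows the total power of $(2\alpha-1)$ is the number of non-fixed indices, namely $n-c_2(H)$, matching the exponent in \eqref{det}.

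The remaining step is the sign computation, which I expect to be the main obstacle in making the bookkeeping airtight. The sign of a permutation that is a product of disjoint cycles of lengths $\ell_1,\dots,\ell_r$ (counting fixed points as $1$-cycles) is $\prod (-1)^{\ell_t-1}=(-1)^{\sum(\ell_t-1)}=(-1)^{n-p(H)}$, since $\sum \ell_t=n$ and the number of cycles equals the number of components $p(H)=c(H)+c_1(H)+c_2(H)$. I would verify that the two orientations of each long cycle have the same sign (they do, since $\sigma$ and $\sigma^{-1}$ are conjugate) so that the doubling is genuinely by $2^{c(H)}$, and that edges and fixed points carry the sign dictated by this formula. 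Assembling the diagonal factors $(1-\alpha)^{c_2(H)}\prod_{v_i\in\mathcal{C}_2(H)}d_G(v_i)$, the off-diagonal factor $(2\alpha-1)^{n-c_2(H)}$, the cycle-doubling $2^{c(H)}$, and the sign $(-1)^{n-p(H)}$, and then summing over all spanning modified elementary subgraphs $H$ (each $H$ being realized by the appropriate set of permutations), yields exactly \eqref{det}. As a sanity check, setting $\alpha=1$ kills every term containing an isolated vertex (since $1-\alpha=0$ eliminates the diagonal contribution unless $c_2(H)=0$), leaving only genuine spanning elementary subgraphs and recovering Harary's formula in Proposition \ref{Prop5.1}.
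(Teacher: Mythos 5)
Your proposal is correct and follows essentially the same route as the paper's proof: the Leibniz expansion of $\det(B_\alpha)$ with $B_\alpha=(2\alpha-1)A+(1-\alpha)D$, the correspondence between cycle decompositions of contributing permutations and spanning modified elementary subgraphs (fixed points $\leftrightarrow$ isolated vertices, transpositions $\leftrightarrow$ edges, longer cycles $\leftrightarrow$ cycles, with the factor $2^{c(H)}$ from the two orientations), and the sign identity $\operatorname{sgn}(\pi)=(-1)^{n-p(H)}$. Your bookkeeping of the factors $(1-\alpha)^{c_2(H)}\prod_{v_i\in\mathcal{C}_2(H)}d_G(v_i)$ and $(2\alpha-1)^{n-c_2(H)}$ matches the paper's computation exactly, and the $\alpha=1$ sanity check recovering Harary's formula is a nice addition not present in the paper.
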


\begin{proof}
Consider $B_{\alpha}(G)=(2\alpha-1)A(G)+(1-\alpha)D(G)=(b_{ij})_{n\times n}$. We have
\begin{align}\label{eq2.1}
\det\,(B_{\alpha}(G))=\sum\limits_{\pi}\,\text{sgn}(\pi)\,b_{1\pi(1)}\,b_{2\pi(2)}\,\cdots\, b_{n\pi(n)},
\end{align}
where summation is over all permutations of $1,2,\ldots,n$.
Since every permutation $ \pi $ has a cycle decomposition, so a cycle of length $ 1 $, $ 2 $ and more corresponds to a vertex, an edge, and a cycle, respectively in the graph $ G $. Thus, each term $b_{1\pi(1)}b_{2\pi(2)}\cdots b_{n\pi(n)}$ corresponds to a spanning modified elementary subgraph of $G$.

\vspace*{3mm}
Also, each spanning modified elementary subgraph $H$ corresponds to $2^{c(H)}$ terms in the summation \eqref{eq2.1} as each cycle is associated to a cyclic permutation in two ways. If $\pi$ is a permutation corresponding to a spanning modified elementary subgraph $H$, then
\begin{align*}
\text{sgn}\,(\pi)&=(-1)^{\,n-\text{number of cycles in the cyclic decomposition of }\pi}\\
&=(-1)^{\,n-c(H)-c_1(H)-c_2(H)}\,=\,(-1)^{\,n-p(H)}
\end{align*}
and
\begin{align*}
b_{1\pi(1)}\,b_{2\pi(2)}\,\cdots\, b_{n\pi(n)}&=(1-\alpha)^{c_2(H)}\,\left(\prod_{v_i\in \mathcal{C}_2(H)}d_G(v_i)\right)\,(2\alpha-1)^{2c_1(H)}\,(2\alpha-1)^{n-2c_1(H)-c_2(H)}\\[2mm]
&=(1-\alpha)^{c_2(H)}\left(\prod_{v_i\in \mathcal{C}_2(H)}d_G(v_i)\right)(2\alpha-1)^{n-c_2(H)}.
\end{align*}
Therefore,
\begin{align*}
\det\,(B_{\alpha}(G))=\sum\limits_H(-1)^{\,n-p(H)}\,2^{\,c(H)}\,(1-\alpha)^{c_2(H)}\,(2\alpha-1)^{\,n-c_2(H)}\,\left(\prod_{v_i\in \mathcal{C}_2(H)}d_G(v_i)\right),
\end{align*}
where summation is over all spanning modified elementary subgraphs $H$ of $G$ .
\end{proof}

In the next corollary, we obtain a Sachs-type formula for the coefficients of the characteristic polynomial of $B_{\alpha}(G)$.

\begin{corollary}\label{Cor5.1}
Let $\phi(B_{\alpha})=\lambda^n+a_1\lambda^{n-1}+a_2\lambda^{n-2}+\cdots+a_{n-1}\lambda+a_n$ be the characteristic polynomial of $ B_\alpha(G) $. Then
	\begin{align*}
		a_k&=\sum\limits_{H}\,(-1)^{\,p(H)}\,2^{\,c(H)}\,(1-\alpha)^{c_2(H)}\, (2\alpha-1)^{\,n-c_2(H)}\left(\prod_{v_i\in \mathcal{C}_2(H)}d_G(v_i)\right),
	\end{align*}
	where summation is over all modified elementary subgraphs $H$ of $G$ with $k$ vertices.	
\end{corollary}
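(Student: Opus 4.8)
The plan is to express each coefficient $a_k$ as a signed sum of principal minors of $B_\alpha$ and then apply, minor by minor, the permutation expansion already used in the proof of Theorem \ref{th3.1}. Writing $\phi(B_\alpha)=\det(\lambda I-B_\alpha)$, the standard relation between the coefficients of a characteristic polynomial and the principal minors gives
$$a_k=(-1)^k\sum_{|S|=k}\det\big(B_\alpha[S]\big),$$
where $B_\alpha[S]$ is the principal submatrix of $B_\alpha$ indexed by a vertex subset $S$ of size $k$, and the sum ranges over all such subsets. First I would record this identity, so that the task reduces to evaluating a single $k\times k$ principal minor.

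The key point is that each principal submatrix $B_\alpha[S]$ has exactly the entry pattern exploited in Theorem \ref{th3.1}: its diagonal entries are $(1-\alpha)\,d_G(v_i)$ for $v_i\in S$, and its off-diagonal entries are $(2\alpha-1)\,a_{ij}$ with $a_{ij}$ the adjacency entries of $G$, which for indices $i,j\in S$ coincide with those of the induced subgraph $G[S]$. I would emphasize that the diagonal entries remain the \emph{ambient} degrees $d_G(v_i)$, not the degrees computed inside $G[S]$; this is precisely what preserves the factor $\prod_{v_i\in\mathcal{C}_2(H)}d_G(v_i)$. Expanding $\det(B_\alpha[S])=\sum_\pi\operatorname{sgn}(\pi)\prod_{v_i\in S}b_{i\pi(i)}$ over permutations $\pi$ of $S$ and reading off the cycle decomposition exactly as before — fixed points as isolated vertices, transpositions as edges, longer cycles as cycles — reproduces the same sign and weight bookkeeping of Theorem \ref{th3.1}, now on the $k$ vertices of $S$, yielding
$$\det\big(B_\alpha[S]\big)=\sum_{H}(-1)^{\,k-p(H)}\,2^{\,c(H)}\,(1-\alpha)^{\,c_2(H)}\,(2\alpha-1)^{\,k-c_2(H)}\Big(\prod_{v_i\in\mathcal{C}_2(H)}d_G(v_i)\Big),$$
the sum being over all spanning modified elementary subgraphs $H$ of $G[S]$, equivalently over the modified elementary subgraphs of $G$ with vertex set exactly $S$.

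Finally I would sum over all $k$-subsets $S$. Every modified elementary subgraph $H$ of $G$ with $k$ vertices determines, and is recovered from, the unique vertex set $S=V(H)$, so the double sum over $S$ and over the spanning subgraphs of $G[S]$ collapses to a single sum over all modified elementary subgraphs $H$ of $G$ having $k$ vertices. The leading $(-1)^k$ then merges with $(-1)^{\,k-p(H)}$ into $(-1)^{\,2k-p(H)}=(-1)^{\,p(H)}$, delivering the stated expression for $a_k$ (with the power $k-c_2(H)$ of $2\alpha-1$, which matches $n-c_2(H)$ in the extremal case $k=n$ of Theorem \ref{th3.1}). I expect the only genuinely delicate step to be this identification together with the accompanying sign cancellation; once it is clear that each principal submatrix retains the full-graph degrees on its diagonal, the remainder is a direct transcription of the argument proving Theorem \ref{th3.1}.
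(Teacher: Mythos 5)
Your proof is correct and is precisely the paper's own argument: the paper's entire proof consists of the remark that $a_k$ equals $(-1)^k$ times the sum of the $k\times k$ principal minors of $B_\alpha(G)$, together with an appeal to the permutation expansion of Theorem \ref{th3.1}, and you have simply written out those details, including the essential point that each principal submatrix retains the ambient degrees $d_G(v_i)$ on its diagonal. One further remark: your exponent $k-c_2(H)$ on $(2\alpha-1)$ is in fact the correct one, and the exponent $n-c_2(H)$ in the corollary as stated is a slip carried over from the spanning case $k=n$ of Theorem \ref{th3.1}; the case $k=1$ makes this concrete, since your formula yields $a_1=-(1-\alpha)\sum_{i}d_G(v_i)=-\tr(B_\alpha(G))$, as it must, whereas the stated exponent would yield $a_1=-(1-\alpha)(2\alpha-1)^{n-1}\sum_{i}d_G(v_i)$.
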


\begin{proof}
The proof follows from Theorem \ref{th3.1} and recalling that $c_k$ is $(-1)^k$ times the sum of $k\times k$ principal minors of $B_{\alpha}(G)$.
\end{proof}

%

One can observe that Proposition \ref{Prop5.1} can also be deduced as a consequence of the Theorem \ref{th3.1}.
\begin{corollary}
	Let $G$ be a graph with $V(G)=\{v_1,v_2,\ldots,v_n\}$. Also let $ Q(G) $ be the signless Laplacian matrix of $ G $. Then,
	\begin{align*}
		\det\,(Q(G))=\sum\limits_{H}\,(-1)^{\,n-p(H)}\,2^{\,c(H)}\left(\prod_{v_i\in \mathcal{C}_2(H)}d_G(v_i)\right),
	\end{align*}
	where summation is over all spanning modified elementary subgraphs $ H $ of $ G $.
\end{corollary}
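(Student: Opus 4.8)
The plan is to specialize Theorem~\ref{th3.1} at the single value $\alpha=\tfrac{2}{3}$ and exploit the proportionality $Q(G)=3\,B_{2/3}(G)$ recorded in the Remark. First I would observe that, since $Q(G)=3\,B_{2/3}(G)$, scaling an $n\times n$ matrix by $3$ scales its determinant by $3^{\,n}$, so that $\det(Q(G))=3^{\,n}\det\!\big(B_{2/3}(G)\big)$. This reduces the problem entirely to evaluating the right-hand side of \eqref{det} at $\alpha=\tfrac{2}{3}$.

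Next I would carry out that substitution. The key (and only) nontrivial observation is that $\alpha=\tfrac{2}{3}$ is precisely the value equalizing the two coefficients appearing in \eqref{det}, namely $1-\alpha=\tfrac{1}{3}$ and $2\alpha-1=\tfrac{1}{3}$. Consequently the two exponential factors collapse into a single constant independent of the subgraph $H$:
$$(1-\alpha)^{\,c_2(H)}\,(2\alpha-1)^{\,n-c_2(H)}=\left(\tfrac{1}{3}\right)^{c_2(H)}\left(\tfrac{1}{3}\right)^{\,n-c_2(H)}=\left(\tfrac{1}{3}\right)^{n}.$$
Plugging this into \eqref{det} yields $\det\!\big(B_{2/3}(G)\big)=\left(\tfrac{1}{3}\right)^{n}\sum_{H}(-1)^{\,n-p(H)}\,2^{\,c(H)}\big(\prod_{v_i\in\mathcal{C}_2(H)}d_G(v_i)\big)$, the sum running over all spanning modified elementary subgraphs $H$ of $G$.

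Finally I would combine the two steps: multiplying this expression by the factor $3^{\,n}$ from the first step cancels the common $\left(\tfrac{1}{3}\right)^{n}$, leaving exactly
$$\det(Q(G))=\sum_{H}(-1)^{\,n-p(H)}\,2^{\,c(H)}\left(\prod_{v_i\in\mathcal{C}_2(H)}d_G(v_i)\right),$$
which is the claimed formula. There is no genuine obstacle here—the argument is a one-line reduction to Theorem~\ref{th3.1}—and the only point worth flagging is the clean cancellation, which hinges on the coincidence $1-\alpha=2\alpha-1$ occurring at $\alpha=\tfrac{2}{3}$.
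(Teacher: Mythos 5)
Your proof is correct and matches the paper's argument, which likewise just sets $\alpha=\tfrac{2}{3}$ in Theorem~\ref{th3.1}; your only addition is to make explicit the $3^{\,n}$ scaling from $Q(G)=3\,B_{2/3}(G)$ and its cancellation against the common factor $\left(\tfrac{1}{3}\right)^{n}$, details the paper leaves implicit.
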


\begin{proof}
Setting $\alpha=\frac{2}{3}$ in the formula \eqref{det} of Theorem \ref{th3.1}, we obtain the result.
\end{proof}

\vspace*{3mm}

\section*{Declaration of competing interest}
There are neither conflicts of interest nor competing interests. The authors have no relevant financial interests.

\section*{Acknowledgements}
A. Samanta thanks the National Board for Higher Mathematics (NBHM), Department of Atomic Energy, India, for financial support in the form of an NBHM Post-doctoral Fellowship (Sanction Order No. 0204/21/2023/R\&D-II/10038). The first author also acknowledges excellent working conditions in the Theoretical Statistics and Mathematics Unit, Indian Statistical Institute Kolkata. K. C. Das is supported by National Research Foundation funded by the Korean government (Grant No. 2021R1F1A1050646).


\mbox{}

\end{document}